\DeclareFontFamily{OML}{rsfs}{\skewchar\font'177}
\DeclareFontShape{OML}{rsfs}{m}{n}{ <5> <6> rsfs5 <7> <8> <9>
	rsfs7 <10> <10.95> <12> <14.4> <17.28> <20.74> <24.88> rsfs10 }{}
\DeclareMathAlphabet{\mathfs}{OML}{rsfs}{m}{n}
\newcommand{\BR}{{\mathbb{R}}}
\newcommand{\CD}{{\mathcal{D}}}
\newcommand{\CF}{{\mathcal{F}}}
\newcommand{\prob}{{\bf P}}
\newcommand{\e}{{\bf E}}
\newcommand{\bae}{\begin{equation}\begin{aligned}}
\newcommand{\eae}{\end{aligned}\end{equation}}
\newcommand{\beq}{\begin{equation}}
\newcommand{\eeq}{\end{equation}}
\begin{document}



\section{Introduction}


The classical description of the dynamics of a large set of neurons is based on deterministic/stochastic differential systems for the excitatory-inhibitory neuron network \cite{gerstner2002spiking,lapique1907recherches,Nykamp_thesis, tuckwell1988introduction}. One of the most famous models is the noisy leaky integrate and fire (LIF) model \cite{lapique1907recherches}, where the collective behavior of a neural network can be averaged as a self-consistent environment and within the network the typical behavior of a neuron is approximated by a stochastic process \cite{brunel2000dynamics, brunel1999fast,compte2000synaptic,Delarue2015,guillamon2004introduction,Inglis2015,mattia2002population,newhall2010cascade,newhall2010dynamics,renart2004mean,sirovich2006dynamics,Touboul2012} and the influence from the network is given by an average synaptic input by a mean-field approximation \cite{Delarue2015,Inglis2015, renart2004mean,Touboul2012}. In this model, the membrane potential of a typical neuron within the network is denoted by the state variable $X_t$. When the synaptic input of the network (denoted by $I(t)$) vanishes, the membrane potential relaxes to the resting value $V_L$. In the single neuron approximation, the synaptic input $I(t)$, which itself is another stochastic process, is replaced by a continuous-in-time counterpart $I_c (t)$ (see  e.g. \cite{brunel2000dynamics, brunel1999fast,mattia2002population,omurtag2000simulation,renart2004mean,sirovich2006dynamics}), which takes the drift-diffusion form
\beq
I \,dt \approx I_c \, dt= \mu_c \, dt+\sigma_c \,dB_t.
\eeq
Here, $B_t$ is standard Brownian motion, and in principle the two processes $I_c (t)$ and $I(t)$ have the same mean and variance. Thus between the firing events, the evolution of the membrane potential is given by the following stochastic differential equation
\beq
\label{ou with jump}
d X_t=(-X_t+V_L+\mu_c)\,dt+\sigma_c \,dB_t.
\eeq
Another important ingredient in the modeling is the firing-and-resetting mechanism: whenever the membrane voltage $X_t$ reaches a threshold value $V_F$, it is immediately reset to a specific value $V_R < V_F$. Namely,
\beq
\label{jump form}
X_t=V_R \quad \text{if} \quad X_{t^-}=V_F.
\eeq
The reader may refer to \cite{renart2004mean} for a thorough introduction to this subject. 

From the perspective of probability theory, the jump-diffusion processes of type \eqref{ou with jump} and \eqref{jump form} were first introduced and studied by Feller \cite{MR47886, MR63607} (in terms of transition semigroups), which apparently was not motivated by applications in neuroscience. More specifically, \cite{MR63607} named such a process an ``elementary return process" and presented its Fokker-Planck equation in a weak form, the proof of which was based on a Markov semigroup argument in \cite{MR47886}. In \cite{MR2353702,MR2499861, MR3244555, MR3016590}, the authors were concerned with the spectral properties of the generator of the stochastic process or related models, and showed the exponential convergence in time towards the stationary distribution. In particular, \cite{MR3244555} applied their results to a neuronal firing model driven by a Wiener process and computed the distribution of the first passage time. In the work \cite{MR573203,MR2673971}, the authors made assumptions on the stochastic process that were more relaxed than or modified from those in \cite{MR0346904} and proved the existence of a pathwise solution of such process in a generalized sense.

For the jump-diffusion process \eqref{ou with jump} and \eqref{jump form}, there has been a growing interest in studying the partial differential equation model for the dynamics of the probability density function with which the stochastic process $X_t$ is associated. From the heuristic viewpoint using It{\^o}'s calculus, it is widely accepted (\cite{caceres2011analysis, liu2020rigorous}) that the evolution of the probability density $f(x,t)\ge 0$ of finding neurons at voltage $x$ at time $t \ge 0$ satisfies the following Fokker-Planck equation on the half line with a singular source term 
\beq
\label{master equation0}
\frac{\partial f}{\partial t}(x,t)+\frac{\partial}{\partial x}[h f(x,t)]- a \frac{\partial^2 f}{\partial x^2} (x,t) = N(t) \delta (x-V_R),\quad x \in (- \infty, V_{F}), \quad t>0,
\eeq
where the drift velocity is $h = -x + V_L + \mu_c$, the diffusion coefficient is $a= \sigma_c^2/2$, $\delta(x)$ denotes the Dirac function and the precise definition of the mean firing rate $N(t)$ is given by \eqref{fire rate}. We complement \eqref{master equation0} with the following Dirichlet and initial boundary conditions:
\beq
f(V_F,t)=0, \quad f(-\infty, t)=0, \quad f(x,0)=f_{\text{in}}(x)\ge 0.
\eeq
Equation \eqref{master equation0} is supposed to be the evolution of a probability density, therefore
\beq
\label{conservation}
\int_{-\infty}^{V_F} f(x,t) \, dx = \int_{-\infty}^{V_F} f_{\text{in}}(x) \, d x=1.
\eeq
Due to the Dirichlet boundary condition at $x=V_F$, there is a time-dependent boundary flux escaping the domain, and a Dirac delta source term is added to the reset location $x=V_R$ to compensate for the loss.  It is straightforward to check that the conservation equation \eqref{conservation} characterizes the mean firing rate $N(t)$ as the flux of neurons at the firing voltage, which is implicitly given by
\beq
\label{fire rate}
N(t) := - a \frac{\partial f}{\partial x} (V_F, t) \ge 0.
\eeq

We remark that this delta function source term on the right hand side of \eqref{master equation0} is equivalent to setting the equation on $(-\infty, V_R) \cup (V_R, V_F)$ instead and imposing the following conditions
\[
f(V_R^-,t)=f(V_R^+,t), \quad  a\frac{\partial }{\partial x}f(V_R^-,t) - a\frac{\partial }{\partial x}f(V_R^+,t) = N(t), \quad  \forall t \ge 0.
\]
The firing events generate currents that propagate within the neuron network, which can be incorporated into this PDE model by expressing the drift velocity $h$ and the diffusion coefficient $a$ as functions of the mean-firing rate $N(t)$ (see e.g. \cite{caceres2011analysis,caceres2011numerical,carrillo2013classical,Yantong}). In the simplest form, the following choice has been widely considered 
	\beq
	\label{nonlinear}
	h(x,N(t))=-x+bN(t),\quad a(N(t))=a_0+a_1(N(t)).
	\eeq
	In particular, the term \(-x\) describes the leaky behavior and \(b\) models the connectivity of the network:
	\(b > 0\) describes excitatory networks and \(b < 0\) describes inhibitory networks. In Sections \ref{sec:pre} and \ref{model extension}, we only consider the simple case when \(b=a_1=0\) and \(a_0=1\), while in Section \ref{numerical} we numerically investigate the nonlinear case when \(b\neq 0\).

Many recent works are devoted to investigating the properties of solutions of such PDE models, including the finite-time blow-up of weak solutions, the multiplicity of steady solutions, the relative entropy estimate, the existence of classical solutions, \ structure-preserving numerical approximations, etc.  (see e.g. \cite{caceres2011analysis,caceres2011numerical,Caceres2014,Caceres2018,carrillo2013classical,Yantong} and the references therein.) However to the best of our knowledge, due to the firing-and-resetting mechanism in \eqref{ou with jump}, the rigorous derivation of the Fokker-Planck equation \eqref{master equation0} in the classical sense from the microscopic stochastic process had not yet been achieved by conventional methods.

In \cite{liu2020rigorous}, we established the rigorous connection between the linear Fokker-Planck equation \eqref{master equation0} and the microscopic stochastic model \eqref{ou with jump} and \eqref{jump form}. For simplicity, we assume for the rest of the work that
\beq 
\label{para_value}
 V_L=0,\quad V_R=0, \quad V_F=1,\quad \mu_c=0,\quad \sigma=\sqrt{2},
\eeq
which means the LIF model becomes a linear model with the interactions among the network neglected. With these assumptions, the process \eqref{ou with jump} and \eqref{jump form} becomes the standard O-U process with a ``hard wall" at $1$, i.e., whenever at time t, $X_t$ hits 1, it immediately jumps to $0$ and then we restart the O-U-like evolution independent of the past. Unlike the standard jump-diffusion process, the jumping time for $X_t$ is determined by the hard wall boundary and thus the classical It{\^ o} calculus is not directly applicable. Inspired by the renewal nature of $X_t$ that agrees with the pioneering work of Feller \cite{MR63607}, a novel strategy based on an iterated scheme was proposed in \cite{liu2020rigorous} to show that the probability density function (abbreviated by p.d.f.) of $X_t$ is the classical solution of its Fokker-Planck equation.

In fact, with the introduction of the auxiliary stochastic process counting the number of firing events, the density $f(x,t)$ of the potential $X_t$ can be decomposed as a summation of sub-density functions $\{f_n (x,t) \}_{n=0}^{\infty}$. Each sub-density naturally links to a less singular sub-PDE problem which is determined iteratively and of better regularity. Besides, with the exponential decay of decomposition all the regularities are preserved in the limit, and thus we concluded the desired properties for the PDE problem \eqref{master equation0}.

In this paper, we continue to study a family of related jump-diffusion processes \eqref{state 
dependent} parameterized by $\delta$, which are used to approximate the process $X_t$ as 
$\delta \to 0^+$. Introduced in \cite{Caceres2014}, such jump-diffusion processes are used 
to explore the reasonable modelling of the mean firing events on the macroscopic level, 
which are associated with the Fokker-Planck equations of neurons with random firing 
thresholds. As shown in \cite{caceres2011analysis}, the solution of the Fokker-Planck 
equation with a deterministic firing potential may blow up in finite time, which is speculated to 
be related to the synchronization activity of neuronal networks. The random discharge 
mechanism in \cite{Caceres2014} provides an alternative scenario of incorporating the 
synchronous states besides introducing time delay, refractory states, etc (see 
\cite{brunel2000dynamics, brunel1999fast, Caceres2014, cormier2020long, 
jahn2011motoneuron}). However, all the regularization models are mostly based on scientific 
intuition or technical insights of the PDE theory, among which the random discharge model is 
tractable on the microscopic level and is reminiscent of the well-studied kinetic equations on 
the macroscopic level. Thus, we choose to focus on the random discharge model to 
rationalize the regularization effect and show the convergence relationship between such a 
model and its limit as the regularization parameter vanishes.

We denote such processes by $X^\delta_t$, and the associated jumping rate $\lambda^\delta(X^\delta_t)=0$ when $X_t^\delta <1$, and $\lambda^\delta(X_t^\delta)=O(\delta^{-1})$ when $X_t^\delta>1+\delta$. Between the firing events, $X_t^\delta$ propagates along the O-U process
\beq 
\label{eq:OU2}
d X_t^\delta = - X_t^\delta \,d t + \sqrt 2 \,d B_t.
\eeq
Recall that there exists a ``hard wall" boundary for $X_t$, i.e. the firing event takes place 
whenever $X_t$ reaches 1. However, the jumps of $X_t^\delta$ are determined by a 
state-dependent Poisson measure, for it evolves as the standard O-U process when 
$X_t^\delta<1$ and can jump with a high rate once $X_t^\delta$ exceeds $1$, and such a 
jump process can be interpreted as a ``soft wall" boundary when $X_t^\delta \ge 1$. The 
precise definition of $X^\delta_t$ can be found in \eqref{continuerate} and \eqref{state 
dependent}. The Poisson jump model frequently appears in kinetic models 
\cite{Cercignani2001Rarefied, Dautray1988Mathematical, Desvillettes2011The, 
Erdos2015Linear} and with It{\^o}'s formula, we can derive its Dynkin's formula, forward and 
backward Kolmogorov equation and Feynman-Kac formula, etc. However, It{\^o} calculus is 
not directly applicable when $\delta=0$ and thus $X_t$ is seen as the singular limit of a 
family of regularized processes. The primary goal of this paper is to justify whether and in 
what sense the regularized model converges to the original one.

Formally speaking, $X_t$ can be seen as the limit of $X_t^\delta$ as $\delta \to 0^+$, for the distribution of the process $X_t^\delta$ is supposed to converge to the process $X_t$ as $\delta \rightarrow 0^+$. The rigorous justification of such convergence is challenging due to the domain differences, and the main contribution of this paper is to rigorously establish the relationship between the two processes. We first present the strong Feller property \eqref{feller} of the limit process $X_t$ by comparing it with the regular O-U process. Then by a similar iteration strategy as that in \cite{liu2020rigorous}, (see the decomposition \eqref{delta sub distribution} and the iteration relationship in Proposition \ref{iteration} for details) we get convergence for the marginal distribution. Utilizing the strong Feller property for $X_t$, we easily generalize the convergence result to any finite dimensional distribution by induction, which together with the relatively compactness of $\{X_t^\delta\}_{\delta>0}$ implies the weak convergence \eqref{weak convergence for process} for the processes. Also, by using the decomposition and iteration strategy, the weak convergence of the mean firing rate \eqref{firing rate convergence} can be established. Finally by a standard re-normalization method in probability theory, we rigorously prove a polynomial-order convergence rate \eqref{conv rate} for the marginal distribution.

As a complement and extension to the convergence justification, we numerically explore the convergence rate and asymptotic behavior of the process $X_t^\delta$ and its density function for both the linear cases considered in previous analysis and nonlinear cases  where the analysis is still intractable in the current framework. The numerical scheme is based on the Scharfetter-Gummel reformulation and preserves certain structures of the Fokker-Planck equation for \(X_t^\delta\)~\cite{Yantong}. We also find numerical evidence of the existence of an asymptotic profile. More specifically, the simulation results strongly suggest that one can numerically identify two parameters \(\alpha\) and \(\beta\)  such that
\begin{equation}
\label{eq:fdelta}
f^\delta(x,t)=\delta^\alpha\phi\left(\delta^\beta(x-1)\right)+o\left(\delta^\alpha\right),\quad \forall x\in[1,+\infty),
\end{equation}
where \(\phi\) is a profile function independent of \(\delta\). Equation \eqref{eq:fdelta} suggests how \(f^\delta\) vanishes on the half-space \(\{x\geq1\}\) in a self-similar fashion.

It is worth mentioning that some recent works \cite{cormier2020hopf, cormier2020long} are devoted to exploring the long time behavior of some non-linear McKean-Vlasov type SDEs arising from neuron models, where the firing mechanism follows the same random discharge dynamic as the one in this work but the diffusion process is absent. In particular, it is proved in \cite{cormier2020hopf} that the SDE admits time periodic solutions through a Hopf bifurcation analysis. Although in this paper we only focus on the linear case, the convergence result may provide tools and insights for studying  nonlinear models, especially for the long time behavior of the random discharge model. However, analysis of nonlinear models is beyond the scope of this paper. 

The rest of the paper is outlined as follows. In Section \ref{sec:pre}, we first review the jump 
process $X_t$ and give the precise definition of $X_t^\delta$. After laying out their 
Fokker-Planck equations, we summarize the main convergence results between $X_t^\delta$ 
and $X_t$. In Section~\ref{model extension} we rigorously show by a probabilistic approach 
that the distribution and mean firing rate of the random discharge model weakly converge to 
those of the original model. Also, a polynomial order convergence rate for the marginal 
distribution is established. In Section \ref{numerical}, we investigate using numerical 
experiments the rates and the asymptotic behavior of related convergence for both linear 
and nonlinear models. In Section \ref{conclusion}, we conclude this paper and give some 
future research directions. For the rest of this work, we use $\gamma$, $C$, $C_0$, $C_k$ 
and $C_T$ to denote generic constants.

\section{Preliminaries and main results} \label{sec:pre}

We first briefly review the results for the jump process $X_t$ in \cite{liu2020rigorous}. The stochastic process $X_t$ has been formally defined in \eqref{ou with jump} and \eqref{jump form}, and the interested readers may refer to \cite{liu2020rigorous} for the rigorous construction of such a process. In the integrate and fire model, the process $X_t$ is used to describe the mean field behavior of the neuron network. Let the distribution of $X_0$ be denoted by $\mu$. For technical reasons we suppose that $\mu$ is a probability measure on $(-\infty,1-\beta]$ for some $\beta>0$ and let $f_{\text{in}}(x)$ be its density. Then with the iteration idea, we have already shown in \cite{liu2020rigorous} that for any fixed $T>0$, the Fokker-Planck equation for $X_t$ is
\begin{equation}
\label{eq_planck}
\left\{
\begin{aligned}
&\frac{\partial f}{ \partial t} - \frac{\partial }{\partial x}\left( x f\right) - \frac{\partial ^2 f}{\partial x^2}=0, \quad x \in (-\infty,0) \cup (0,1), t\in (0,T],\\
&f(0^-,t)=f(0^+,t), \quad  \frac{\partial}{\partial x}f(0^-,t)- \frac{\partial}{\partial x}f(0^+,t)=-\frac{\partial}{\partial x}f(1^-,t), \quad t\in (0,T],\\
&f(-\infty,t)=0, \quad f(1,t)=0, \quad t\in [0,T],\\
&f(x,0)=f_{\text{in}}(x), \quad x\in (-\infty,1).
\end{aligned}
\right.
\end{equation}
In the rest of this paper, we define 
$$
N(t):=-\frac{\partial}{\partial x}f(1^-,t), \quad t>0,
$$
which serves as the definition of the mean firing rate. 

In this paper, we consider a related family of jump-diffusion processes parameterized by 
$\delta$ as in \eqref{eq:OU2}, which is related to the Fokker-Planck equation for neurons 
with random firing thresholds, to approximate the original process $X_t$. Introduced in 
\cite{Caceres2014}, we use the random discharge model to rationalize the regularization 
effect for $X_t$. The precise definition of $X_t^\delta$ is as follows.

For a fixed $\delta > 0$, we define the discharge rate function:
\begin{equation}\label{continuerate}
\lambda^{\delta}(x)=
\begin{cases}
0, & x\le 1 \\
\frac{x-1}{\delta^2}, &  x\in[1,1+\delta] \\
\frac{1}{\delta}, &  x\ge 1+\delta
\end{cases}
\end{equation}
Then consider the following state-dependent jump-diffusion process as defined in Chapter $\uppercase\expandafter{\romannumeral 6}$ of \cite{hanson2007applied}:
\begin{equation}
dX^{\delta}_t=-X^{\delta}_tdt+\sqrt 2 dB_t+[-X^{\delta}_{t_-}]dP^{\delta}(t,X^{\delta}_t)
\label{state dependent}
\end{equation}
where $P^{\delta}(t,X^{\delta}_t)$ is a Poisson point process with intensity $\lambda^{\delta}(X^{\delta}_t)$.
	\begin{remark}
	For the rest of this paper, we only consider the simplified initial condition for the processes, i.e. both $X_t$ and $X_t^\delta$ start from a fixed point $x\in (-\infty,1)$. In the following, $\e^x$ and $\prob^x$ denote the expectation and probability of a stochastic process starting from $x$. The natural extension to general and proper initial conditions can be obtained by convolution.
	\end{remark}

First by It{\^o}'s formula, we can directly derive the Fokker-Planck equation for $X_t^\delta$.
\begin{theorem}
	\label{FK for jump diffusion}
	Let $f^\delta(x,t)$ denote the p.d.f. of the process $X_t^\delta$ starting from $y<1$. It satisfies the following PDE problem in the sense of distributions
	\begin{equation}
	\label{eq:FKJD}
	\left\{
	\begin{aligned}
	\frac{\partial f^\delta}{ \partial t} - \frac{\partial }{\partial x}\left( x f^\delta \right) - \frac{\partial ^2 f^\delta}{\partial x^2}&=N^\delta (t) \delta(x)-\lambda^\delta(x)f^\delta(x,t), \quad x\in \mathbb R, \quad t>0, \\
	N^\delta (t) &=\int_{\mathbb R} \lambda^\delta(z)f^\delta(z,t)dz,\quad t>0, \\
	f^\delta(-\infty,t)&=f^\delta(+\infty,t)=0, \quad t>0,\\
	f^\delta(x,0)&=\delta(x-y) \ \mbox{in}\ \CD'(\mathbb R),
	\end{aligned}
	\right. 
	\end{equation}
	where $N^\delta(t)$ is the modified mean firing rate and  $\delta(x)$ denotes the Dirac function.
\end{theorem}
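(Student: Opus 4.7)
The plan is to derive the equation by testing against smooth, compactly supported functions and applying the jump-diffusion Itô formula, then interpreting the resulting identity distributionally. I would fix $\varphi\in C_c^\infty(\mathbb R)$ and compute $d\varphi(X_t^\delta)$. Because the jump at a Poisson event sends $X_{t^-}^\delta$ to $X_{t^-}^\delta+[-X_{t^-}^\delta]=0$, the Itô formula for the state-dependent jump-diffusion in Chapter VI of \cite{hanson2007applied} gives
\begin{equation*}
d\varphi(X_t^\delta)=\bigl[-X_{t^-}^\delta\varphi'(X_{t^-}^\delta)+\varphi''(X_{t^-}^\delta)\bigr]\,dt+\sqrt 2\,\varphi'(X_{t^-}^\delta)\,dB_t+\bigl[\varphi(0)-\varphi(X_{t^-}^\delta)\bigr]\,dP^\delta(t,X_t^\delta).
\end{equation*}

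Next I would take expectations starting from $X_0^\delta=y$. The Brownian integral is a true martingale because $\varphi'$ is bounded, and compensating the Poisson integral by $\lambda^\delta(X_t^\delta)\,dt$ produces a martingale as well (the compensator is integrable because $\lambda^\delta$ is bounded by $1/\delta$ and $\varphi$ is bounded). This yields Dynkin's formula
\begin{equation*}
\frac{d}{dt}\mathbf E^y[\varphi(X_t^\delta)]=\mathbf E^y\!\left[-X_t^\delta\varphi'(X_t^\delta)+\varphi''(X_t^\delta)+\lambda^\delta(X_t^\delta)\bigl(\varphi(0)-\varphi(X_t^\delta)\bigr)\right].
\end{equation*}
Rewriting both sides against the law $f^\delta(x,t)\,dx$ of $X_t^\delta$ and integrating by parts the drift and diffusion terms (which is legitimate since $\varphi$ has compact support), I obtain
\begin{equation*}
\frac{d}{dt}\int_{\mathbb R}\varphi\,f^\delta\,dx=\int_{\mathbb R}\varphi\left[\frac{\partial}{\partial x}(xf^\delta)+\frac{\partial^2 f^\delta}{\partial x^2}\right]dx+\varphi(0)N^\delta(t)-\int_{\mathbb R}\varphi(x)\lambda^\delta(x)f^\delta(x,t)\,dx,
\end{equation*}
with $N^\delta(t)=\int\lambda^\delta(z)f^\delta(z,t)\,dz$. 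Since $\varphi$ is arbitrary, this is precisely the weak form of \eqref{eq:FKJD}, and the initial condition $f^\delta(\cdot,0)=\delta(\cdot-y)$ follows from $\int\varphi\,f^\delta(\cdot,0)=\mathbf E^y\varphi(X_0^\delta)=\varphi(y)$.

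The only genuine care is in justifying the martingale property of the compensated Poisson integral and the applicability of the jump Itô formula on an unbounded state space with a state-dependent rate. For this I would note that $\lambda^\delta$ is bounded by $1/\delta$, so the jump measure has bounded intensity and the compensator argument of \cite{hanson2007applied} applies verbatim; the Ornstein--Uhlenbeck drift together with the mean-reverting effect of resets to $0$ ensures that $\mathbf E^y[|X_t^\delta|^p]$ stays finite on every finite time interval, so there is no issue with the stochastic integrals being only local martingales. The Dirac source $N^\delta(t)\delta(x)$ on the right-hand side of \eqref{eq:FKJD} should be interpreted distributionally, exactly as produced by the $\varphi(0)N^\delta(t)$ term above; this is why the statement is phrased in $\CD'(\mathbb R)$ rather than pointwise. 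The main, though mild, obstacle is checking that all these integrability and boundary issues go through uniformly in $t\in(0,T]$ so that one can strip off the test function to pass from the weak identity back to \eqref{eq:FKJD}.
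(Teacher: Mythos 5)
Your proof is correct and is precisely the elementary argument the paper has in mind; the paper itself states ``The proof of Theorem~\ref{FK for jump diffusion} is elementary, and we choose to omit the details,'' and what you have written is exactly that standard chain: jump-diffusion It\^o formula (noting the jump resets the state to $0$), compensation of the Poisson integral against $\lambda^\delta(X_t^\delta)\,dt$, expectation to obtain Dynkin's formula, rewriting against the law $f^\delta(x,t)\,dx$, and integration by parts against $\varphi\in C_c^\infty(\mathbb R)$ to read off the weak form of \eqref{eq:FKJD} together with the initial datum $\delta(\cdot-y)$. The one small remark is that your appeal to moment bounds on $X_t^\delta$ to upgrade local martingales to true martingales is not actually needed: since $\varphi$ has compact support, both $\varphi'$ and $\varphi(0)-\varphi(\cdot)$ are bounded, and $\lambda^\delta\le 1/\delta$, so the Brownian integral and the compensated Poisson integral are already genuine martingales on $[0,T]$ for trivial reasons.
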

\begin{remark}
	To prevent confusion, we clarify that
	$\delta(x)$ is Dirac function and $\delta$ denotes the parameter in the intensity \eqref{continuerate}.
\end{remark}

Equation \eqref{eq:FKJD} is referred to as the Fokker-Planck equation for neuron networks with random discharges \cite{Caceres2014}. We observe that, in this variant model, the mean firing rate $N^\delta(t)$ is modified to an integral of the density function, which admits a global estimate as shown in \cite{Caceres2014}. The proof of Theorem \ref{FK for jump diffusion} is elementary, and we choose to omit the details in this work.

However for the process $X_t^\delta$, It{\^o}'s calculus is not directly applicable when $\delta=0$ and the approximation error between the two processes is not quantifiable. The main contribution of this paper is to rigorously establish the relationship between the two processes. We first prove the strong Feller property of the limit process $X_t$, which plays a key role in the proof of weak convergence. With the iteration strategy, we can prove weak convergence between the stochastic processes and the convergence rate of the marginal distribution, which is summarized in the following theorem.
\begin{theorem}
\label{relationship}
(\romannumeral1) (Strong Feller property.) For any bounded and Borel measurable function 
$\varphi$ and any $t>0$, $\e^x[\varphi(X_t)]$ is continuous against the starting point $x$. 
I.e., for any $\varphi\in \mathcal{B}_b(-\infty,1)$, $\epsilon>0$ and $x\in (-\infty,1)$, there 
exists $ \delta_0\in(0,\varepsilon)$ such that for any $y \in (-\infty,1)$ such that $ |y-x|\le 
\delta_0$, we have
\bae
\label{feller}
\left|\e^x[\varphi(X_t)]-\e^y[\varphi(X_t)]\right|< \varepsilon.
\eae
(\romannumeral2) (Weak convergence of stochastic process.) Let the stochastic processes $X_\cdot^\delta$ and $X_\cdot$ start from any fixed $x \in (-\infty,1)$. Without loss of generality, suppose that $X_0^\delta = X_0 = 0$. Then $X^\delta_\cdot$ converges weakly to $X_\cdot $ as $\delta \to 0^+$, i.e., for any bounded and continuous function $\varphi\in C_b(D_{\mathbb{R}}[0,\infty))$,
\beq
\label{weak convergence for process}
\lim_{\delta \to 0^+} \e^0 \varphi(X_\cdot^\delta) = \e^0 \varphi(X_\cdot),
\eeq
where $D_{\mathbb{R}}[0,\infty)$ denotes the space of right continuous functions $x: [0,\infty) \to \mathbb{R}$ with left limits.\\
(\romannumeral3) (Convergence for the mean firing rate.) The modified mean firing rate 
$N^\delta(t)$ also converges weakly to the mean firing rate $N(t)$, i.e. for any fixed $T>0$ 
and $\varphi\in C_b[0,T]$, 
\beq
\label{firing rate convergence}
\lim_{\delta\to 0^+} \int_0^T\varphi(t)N^\delta(t)dt=\int_0^T\varphi(t)N(t)dt.
\eeq
(\romannumeral4) (Convergence rate for marginal distribution.) Let $F^{\delta}(x,t)$ and $F(x,t)$ denote the cumulative distribution functions of $X_t^{\delta}$ and $X_t$ respectively. Without loss of generality, suppose that $X_0^\delta = X_0 = 0$. Then for any fixed $T<\infty$, there is a constant $\gamma \in (0,\infty)$ s.t. $\forall (x,t)\in\mathbb{R}\times[0,T]$, we have
\begin{equation}
	\label{conv rate}
	\left|F^{\delta}(x,t)-F(x,t)\right| = O(\delta^\gamma) \quad \text{as} \quad \delta \to 0^+.
\end{equation}
\end{theorem}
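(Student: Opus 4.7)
The plan is to attack the four parts of Theorem~\ref{relationship} in the order stated, since each later part reuses its predecessors. Throughout, I would lean on two ingredients inherited from \cite{liu2020rigorous}: the decomposition $f = \sum_{n \ge 0} f_n$ of the density by the number of firings before time $t$, and the exponential-in-$n$ decay of the sub-densities, which underwrites the interchange of sums and limits.

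For Part~(i), the natural move is to condition on the first firing time $\tau = \inf\{s : X_s = 1\}$. By the strong Markov property applied at $\tau$, for any bounded Borel $\varphi$ and any $t>0$,
\begin{equation*}
\e^x[\varphi(X_t)] = \e^x\bigl[\varphi(X_t)\,\ind_{\tau > t}\bigr] + \int_0^t \e^0[\varphi(X_{t-s})]\,\prob^x(\tau \in ds).
\end{equation*}
The first summand is the transition kernel of OU killed upon hitting $1$, whose density is smooth in $x$ by classical parabolic regularity on $(-\infty,1)$; the second summand is a convolution against the first-passage law $\prob^x(\tau \in ds)$, which is continuous in $x$ because $\{1\}$ is a regular boundary point from below. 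Putting the two together produces the modulus $\delta_0$.

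For Parts~(ii) and~(iii), I would use Prokhorov: tightness in $D_{\BR}[0,\infty)$ plus convergence of finite-dimensional distributions. Tightness of $\{X^\delta_\cdot\}_{\delta>0}$ would be verified by Aldous' criterion, using the uniform-in-$\delta$ control of the expected number of jumps in $[0,T]$---equivalently of $\int_0^T N^\delta(s)\,ds$---afforded by the global estimate from \cite{Caceres2014}. One-dimensional marginal convergence comes from the iteration scheme: decompose $f^\delta = \sum_n f_n^\delta$ by number of firings, show $f_n^\delta \to f_n$ termwise using the $n=0$ base case (plain OU with a soft killing term that converges to hard killing), and interchange sum and limit via the uniform exponential tail. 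Upgrading to multi-time convergence is by induction: conditional convergence of $X^\delta_{t_1}$ gets lifted to convergence of the post-$t_1$ trajectory through the strong Feller property of Part~(i). Part~(iii) falls out of the same decomposition applied to $\int_0^T \varphi(t) N^\delta(t)\,dt$, since this integral is nothing but an expectation in terms of jump counts of $X^\delta_\cdot$, and the iteration scheme delivers the required convergence of those counts.

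Part~(iv) is the substantive one. My plan is a coupled renormalization: drive $X^\delta_\cdot$ and $X_\cdot$ by the same Brownian motion, and synchronize their firings through a pathwise accept/reject scheme. At each firing of $X$, the companion $X^\delta$ overshoots the barrier by an amount that, when rescaled by the natural space scale $\delta$, converges to a tight limiting excursion; the associated delay and post-firing gap are both $O(\delta^{\gamma_1})$ in law for some explicit $\gamma_1$. Summing over the $O_\prob(1)$ firings in $[0,T]$ and using the OU contraction $e^{-(t-s)}$ propagates this to a uniform pathwise bound $|X^\delta_t - X_t| \le \delta^{\gamma_1}$ on an event of probability at least $1 - \delta^{\gamma_2}$. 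Translating this into a CDF bound uses the Lipschitz regularity of $F(\cdot,t)$ (inherited from the smoothness of the PDE solution of \eqref{eq_planck}) via the standard two-sided inequality
\begin{equation*}
|F^\delta(x,t) - F(x,t)| \le |F(x+\varepsilon,t) - F(x-\varepsilon,t)| + \prob\bigl(|X^\delta_t - X_t| > \varepsilon\bigr),
\end{equation*}
optimizing $\varepsilon = \delta^{\gamma_1}$ to balance the two contributions and obtain \eqref{conv rate}. The main obstacle will be precisely this coupling: the deterministic hitting mechanism of $X$ and the state-dependent Poisson mechanism of $X^\delta$ are structurally different, so matching the firings one-to-one is impossible and the discrepancy has to be quantified through a renormalized excursion picture that remains uniform across all $O_\prob(1)$ firings. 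Parts~(i)--(iii) are comparatively structural and should yield to the iteration scheme of \cite{liu2020rigorous} combined with standard weak-convergence machinery.
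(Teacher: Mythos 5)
Your outline for parts (i)--(iii) matches the paper's strategy almost exactly: condition on the first firing time to get the strong Feller property (the paper cuts off at an auxiliary time $t_0$ and compares with the plain OU semigroup, but your decomposition into the killed-OU kernel plus a convolution against the first-passage law $\prob^x(\tau\in ds)$ is a perfectly valid variant, provided you note that the first-passage \emph{density} $f_{T_1^x}$ is continuous in $x$---which the paper cites from \cite{liu2020rigorous}); establish marginal convergence through the decomposition $f^\delta=\sum_n f_n^\delta$, the coupling of $T_1^0$ and $T_1^{0,\delta}$, and the exponential tail in $n$; upgrade to finite-dimensional convergence by induction via the Feller property; and close the argument with tightness in $D_{\BR}[0,\infty)$. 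The paper verifies tightness through the Ethier--Kurtz compact-containment and modulus-of-continuity criteria (Thms.\ 3.7.2, 3.7.8 of \cite{ethier2009markov}) rather than Aldous' criterion with a jump-count bound, but these are interchangeable. Part~(iii) follows the same decomposition route after identifying $N_{n-1}^\delta=f_{T_n^\delta}$.

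Part~(iv) is where you genuinely diverge from the paper, and where the gap is. You propose a \emph{pathwise} coupling: drive $X$ and $X^\delta$ by the same Brownian motion, synchronize firings one-by-one, bound the delay and the post-firing position gap at each firing, propagate via OU contraction, and then convert the pathwise bound to a CDF bound using Lipschitz regularity of $F(\cdot,t)$. The multiscale renormalization idea is the right estimate, but you are applying it to the wrong object. After the first reset the two processes sit at different space-time points (the one that fires earlier is nonzero when its companion resets), so the pathwise discrepancy accumulates through a chain of excursions, and controlling it \emph{uniformly over the random number of firings} is precisely the obstacle you flag yourself. The paper avoids this entirely. It never couples the trajectories beyond the first firing: the iteration identity
\[
F_n^\delta(x,t)=\int_0^t F_{n-1}^\delta(x,t-s)\,dF_{T_1^\delta}(s)
\]
means every level reduces to a \emph{single} comparison of the first-passage time with the first Poisson jump, so the renormalization estimate $\prob^1(\hat T_1^{0,\delta}>\epsilon)\le\epsilon$ for $\delta<\epsilon^\gamma$ is proved once and then transported through $n$ by the convolution structure, with the exponential tail $\prob(T_n\le T)\le e^{-Cn}$ controlling the truncation level $n_0=O(\log\epsilon^{-1})$. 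This bypasses both the accumulating-excursion problem and the need for Lipschitz regularity of $F(\cdot,t)$ (which you would otherwise have to prove separately). Your route could perhaps be pushed through, but as sketched the propagation step across a random number of firings is not resolved, whereas the paper's iteration makes it a non-issue.
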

The detailed proof of the theorem is presented in Section \ref{model extension}, and is hereby outlined as follows. 
\begin{itemize}
		\item In Section \ref{feller section}, we show the strong Feller property \eqref{feller} by using the connection between the jump diffusion process $X_t$ and the standard O-U process.
		\item In Section \ref{1 dim distribution}, by the coupling method and the iteration approach in \cite{liu2020rigorous} we prove the convergence for the marginal distribution, i.e., for any fixed $T<\infty$ and $\forall$ $\epsilon>0$, there is a $\eta_0>0$, s.t. $\forall \delta<\eta_0$, we have
		\begin{equation}
			\label{conv}
			\left|F^{\delta}(x,t)-F(x,t)\right|\le \epsilon \quad \text{for} \quad (x,t)\in \mathbb{R}\times[0,T].
		\end{equation}
		\item Then in Section \ref{weak}, by the strong Feller property for $X_t$, we get convergence for any finite marginal distribution by induction, which together with the relatively compactness of $\{X_t^\delta\}_{\delta>0}$ gives the weak convergence for processes.
		\item In Section \ref{firing rate conv}, we prove weak convergence of the mean firing rate by the decomposition and iteration approach in \cite{liu2020rigorous}.
		\item Finally in Section \ref{convergence rate}, we rigorously prove a polynomial upper bound on the rate at which $F^\delta(x,t) \to F(x,t)$ as $\delta \to 0^+$ with a multiscale renormalization argument in probability theory.
	\end{itemize}

\section{The random discharge model and its convergence}\label{model extension}

In Section \ref{sec:pre}, we precisely defined a family of jump-diffusion processes $X_t^\delta$ that are associated with the Fokker-Planck equations with random discharges. With the jumping criterion slightly altered, we are able to derive the Fokker-Planck equation of $X_t^{\delta}$ by classical It{\^o}'s calculus, which is reckoned as a regularized model (see Theorem 6.1 in \cite{Caceres2014}). However, the rigorous justification of such convergence is challenging.

\subsection{Strong Feller property}\label{feller section}

First, we prove the strong Feller property of process $X_t$, which is useful in getting  convergence for the finite dimensional distributions. Dr. Lihu Xu at the University of Macau taught us the following nice and easy proof through direct communication.
\begin{proof}
	[Proof of the strong Feller property in Theorem \ref{relationship}:]
	Similarly to \cite{liu2020rigorous}, we can strictly construct the jump process $X_t$ starting from $x$ and let $T_1^x$ denote the first time it hits $1$. By the proof of Theorem $2$ in \cite{liu2020rigorous}, we know that  for any $\varepsilon_0>0$ the p.d.f $f_{T_1^y}(\cdot)$ of the first hitting time is uniformly continuous with respect to all $y<1-\varepsilon_0$. Thus for any fixed $t>0$, $\varphi\in C_b(-\infty,1)$ and $ \forall \varepsilon>0$, there exists $t_0\in(0,t)$ such that for $\forall y\le 1-\varepsilon$, 
	\beq
	\label{density y decay}
	\prob(T_1^y\le t_0)\le \frac{\varepsilon}{8(\|\varphi\|_{L^\infty}+1)}.
	\eeq 
	Recall that Skorohod \cite{MR0346904} has proved that $X_t$ is Markovian, thus 
	$$
	\phi_{\varphi,t-t_0}(x):=\e^x[\varphi(X_{t-t_0})]
	$$ 
	is clearly bounded and measurable against $x \in (-\infty,1)$. Now applying the strong Feller property of the regular OU process $\{OU_t\}_{t\ge 0}$ on $\phi_{\varphi,t-t_0}$, we have that for any $x\le 1-2\varepsilon$, there exists $\delta_0\in(0,\varepsilon)$ such that $\forall |y-x|\le \delta_0$,
	\beq
	\label{OU feller}
	\left|\e^x[\phi_{\varphi,t-t_0}(OU_{t_0})]-\e^y[\phi_{\varphi,t-t_0}(OU_{t_0})]\right| \le \frac{\varepsilon}{2}.
	\eeq
	Now to compare $\e^x[\varphi(X_t)]$ and $\e^y[\varphi(X_t)]$, we have 
	$$
	\e^x[\varphi(X_t)]=\e^x[\varphi(X_t)\mathbbm{1}_{T_1^x\le t_0}]+\e^x[\varphi(X_t)\mathbbm{1}_{T_1^x>t_0}]=:I^x_1+I^x_2.
	$$
	Using \eqref{density y decay}, we immediately have that $|I^x_1| \le \frac{\varepsilon}{8}$, and by the Markov property of $X_t$, we have 
	\bae
	I^x_2=&\e^x\left[\e^x\left[\varphi(X_t)\Big|X_{t_0}\right]\mathbbm{1}_{T_1^x> t_0}\right]\\
	=&\int_{-\infty}^1\e^x[\varphi(X_t)\Big|X_{t_0}=z]f_0^x(z,t_0)dz\\
	=&\int_{-\infty}^1\e^z[\varphi(X_{t-t_0})]f_0^x(z,t_0)dz\\
	=&\int_{-\infty}^1\phi_{\varphi,t-t_0}(z)f^x_{\text{ou}}(z,t_0)dz+\int_{-\infty}^1\phi_{\varphi,t-t_0}(z)\left[f_0^x(z,t_0)-f^x_{\text{ou}}(z,t_0)\right]dz\\
	=&:I^x_3+I^x_4,
	\eae
	where $f_0^x(y,t)dy = \prob^x(X_t \in dy, T_1^x>t)$ and $f_{\text{ou}}^x(y,t)$ denotes the p.d.f. of standard OU process starting from $x$. Noting that $f_0^x(y,t)$ is the p.d.f. for the killed OU process, then $f^x_{\text{ou}}(y,t) \ge f_0^x(y,t)$ and thus for $I_4^x$, we have
	\bae
	\label{bound for I^x_4}
	|I^x_4|&\le \|\varphi\|_{L^\infty}\int_{-\infty}^1 f^x_{\text{ou}}(z,t_0)-f_0^x(z,t_0)dz\\
	&\le \|\varphi\|_{L^\infty}\left[1-\int_{-\infty}^1f_0^x(z,t_0)dz\right]\\
	&= \|\varphi\|_{L^\infty}\prob^x(T_1^x\le t_0)\\
	&\le \frac{\varepsilon}{8}.
	\eae
	Similarly, for $y$ we have
	$$
	\e^y[\varphi(X_t)]=I_1^y+I_2^y
	$$
	\bae
	I^y_2=&\int_{-\infty}^1\phi_{\varphi,t-t_0}(z)f^y_{\text{ou}}(z,t_0)dz+\int_{-\infty}^1\phi_{\varphi,t-t_0}(z)[f_0^y(z,t_0)-f^y_{\text{ou}}(z,t_0)]dz\\
	=&:I^y_3+I^y_4.
	\eae
	Then $|I_1^y|\le\frac{\varepsilon}{8}$ by \eqref{density y decay} and with the same argument in \eqref{bound for I^x_4}, we have $|I^y_4|\le \frac{\varepsilon}{8}$ and thus 
	\beq
	\left|\e^x[\varphi(X_t)]-\e^y[\varphi(X_t)]\right| \le|I_1^x|+|I_1^y|+|I_4^x|+|I_4^y|+|I_3^x-I_3^y| \le \varepsilon.
	\eeq
	where the term $|I_3^x-I_3^y|$ is small because of \eqref{OU feller} and since $\varepsilon$ is arbitrary, the strong Feller property for $X_t$ is valid.
\end{proof}

\subsection{Weak Convergence}\label{process convergence}

\

\noindent

\subsubsection{Convergence of the marginal distribution}\label{1 dim distribution}

\

\noindent

Now we prove the marginal distribution convergence \eqref{conv}. For any $t>0$, let $F^{\delta}(x,t)=P(X^{\delta}_t \le x)$ denote the cumulative distribution function (abbreviated by c.d.f.) of $X^{\delta}_t$, and $F(x,t)$ and $f(x,t)$ are the c.d.f. and p.d.f. of $X_t$ respectively. In \cite{liu2020rigorous}, we let $n_t$ denote the counting process of jumping times of $X_t$ and $T_n$ be its $n$-th jumping time, with which we decompose $F(x,t)$ as the summation of sub-c.d.f. $F_n(x,t)= \prob\left(X_t\le x, n_t=n \right)$. Similarly for $X_t^\delta$, define
\begin{equation}
	n^{\delta}_t=\left|\lbrace{s:s\le t, X^{\delta}_s\ne X^{\delta}_{s-}\rbrace}\right|
\end{equation}
to be the counting process which denotes the number of jumping times before $t$. And for each $n\ge 1$, define the stopping times:
\begin{equation}
	\label{delta jumping time sequence}
	T^{\delta}_n=\inf\lbrace{t\ge 0: n^{\delta}_t=n\rbrace}.
\end{equation}
Let $F_{T^{\delta}_n}(t)$ and $f_{T^{\delta}_n}(t)$ be the c.d.f and p.d.f of $T^{\delta}_n$ respectively. Moreover, for each $n\ge 0$, we also define:
\begin{equation}
	\label{delta sub distribution}
	F^{\delta}_n(x,t)=\prob\left(X^{\delta}_t\le x, n^{\delta}_t=n \right).
\end{equation}
Using similar arguments as in section $2$ of \cite{liu2020rigorous}, we have the following relationship and the exponential decay of $F_n^\delta(x,t)$ with respect to $n$.
	\begin{proposition}
		\label{iteration}
		For all $n\ge 1$,
		\begin{eqnarray}
			F^{\delta}_n(x,t) &=& \int^t_0 F^{\delta}_{n-1}(x,t-s)dF_{T^{\delta}_1}(s)\\
			F_{T^{\delta}_n}(t) &=& \int^t_0 F_{T^{\delta}_{n-1}}(t-s)dF_{T^{\delta}_1}(s),
		\end{eqnarray}
		and there is a $\theta>0$ such that for any $T\in (0,\infty)$,
		\bae
		F^\delta_n(x, t)\le \exp(-\theta n+T)
		\label{eq_exp_decay}
		\eae
		for all $t\le T$ and $x\in (-\infty,1]$.
		\label{exp decay}
	\end{proposition}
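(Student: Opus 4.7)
My plan is to prove Proposition \ref{iteration} in two stages: first establish the iteration formulas via the strong Markov property, then derive the exponential decay through a Chernoff-type bound together with a coupling to the OU process.

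For the iteration formulas, the key structural fact about $X^\delta_\cdot$ is that at any jump time the jump increment is $-X^\delta_{t_-}$, which resets the process to $0$, coinciding with the assumed starting point $X^\delta_0 = 0$. The process built from \eqref{state dependent} in Chapter VI of \cite{hanson2007applied} is strong Markov, and $T^\delta_1$ is a stopping time in its natural filtration. Conditional on $\{T^\delta_1 = s\}$, the shifted process $\{X^\delta_{s+u}\}_{u\ge 0}$ is an independent copy of $X^\delta_\cdot$ started from $0$, with jump counter $n^\delta_{s+u} = 1 + \tilde n^\delta_u$ for the corresponding counter $\tilde n^\delta_u$ of the shifted copy. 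Therefore
\begin{equation*}
\prob\bigl(X^\delta_t\le x,\, n^\delta_t = n \,\big|\, T^\delta_1 = s\bigr) = \prob\bigl(X^\delta_{t-s}\le x,\, n^\delta_{t-s} = n-1\bigr) = F^\delta_{n-1}(x,t-s),
\end{equation*}
and integrating against $dF_{T^\delta_1}(s)$ gives the first identity. Applying the same argument to the event $\{T^\delta_n \le t\} = \{n^\delta_t \ge n\}$ gives the second.

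For the exponential decay, iterating the identity for $T^\delta_n$ shows that $T^\delta_n$ has the law of a sum of $n$ i.i.d.\ copies $\tau^\delta_1,\dots,\tau^\delta_n$ of $T^\delta_1$. Since $F^\delta_n(x,t) \le \prob(n^\delta_t \ge n) = \prob(T^\delta_n \le t) \le \prob(T^\delta_n \le T)$ for $t\le T$, it suffices to control the left tail of $T^\delta_n$ uniformly in $\delta$. Markov's inequality applied to $e^{-T^\delta_n}$ yields
\begin{equation*}
\prob(T^\delta_n \le T) \le e^T \ev\bigl[e^{-T^\delta_n}\bigr] = e^T \bigl(\ev[e^{-T^\delta_1}]\bigr)^n.
\end{equation*}
The essential observation is that $\lambda^\delta \equiv 0$ on $(-\infty,1)$, so no jump can occur before $X^\delta$ first enters $[1,\infty)$. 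Coupling $X^\delta$ up to this hitting time with the standard OU process $OU_t$ driven by the same Brownian motion and started at $0$, I obtain $T^\delta_1 \ge \tau$ where $\tau := \inf\{u \ge 0: OU_u = 1\}$ is independent of $\delta$ and strictly positive almost surely. Hence $\ev[e^{-T^\delta_1}] \le \ev[e^{-\tau}] =: e^{-\theta}$ with $\theta > 0$, uniformly in $\delta$, and $F^\delta_n(x,t) \le \exp(-\theta n + T)$ follows.

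The one step requiring care is the strong Markov argument: one must check that the state-dependent Poisson jump-diffusion \eqref{state dependent} falls within a strong Markov framework and that the post-$T^\delta_1$ segment is a genuinely independent copy restarted from $0$. Because $\lambda^\delta$ is bounded and Borel measurable and the OU coefficients are linear, this is standard in the setting of \cite{hanson2007applied}, but it is the one non-routine ingredient; the subsequent Chernoff step is immediate, and the uniformity in $\delta$ of the decay rate $\theta$ comes for free from the OU-coupling since $\tau$ itself does not depend on $\delta$.
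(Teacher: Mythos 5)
Your proof is correct and follows essentially the same route as the paper (which delegates to Section 2 of \cite{liu2020rigorous}): the iteration formulas via the renewal/strong-Markov decomposition at the reset-to-zero jump times, and the exponential decay via the coupling $T^\delta_1 \ge \tau$ (with $\tau$ being the OU first-passage time to $1$, i.e.\ $T_1$) followed by a Chernoff bound; the paper phrases the last step as $\prob^0(T^\delta_n \le t) \le \prob^0(T_n \le t)$ with the Chernoff estimate already established for $T_n$ in \cite{liu2020rigorous}, which is the same content.
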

	With the exponential decay of $F_n^\delta(x,t)$ with respect to $n$, we know that $F^\delta(x, t)$ is absolutely continuous with respect to the Lebesgue measure and use $f^\delta(x,t)$ to denote the p.d.f..
	
Before the discussion of technical details, we first outline the major steps as follows.
\begin{itemize}
\item[(\romannumeral 1)]
    We use the technique of coupling to compare the difference between $F^{\delta}_0(x,t)$ and $F_0(x,t)$ together with $F_{T^{\delta}_1}(t)$ and $F_{T_1}(t)$. 
 \item[(\romannumeral 2)]
    We prove the uniform continuity of $F_n(x,t)$ by the regularity of $F_0(x,t)$ and the iteration approach.
 \item[(\romannumeral 3)]
     With the uniform continuity, parallel to step (i), we estimate the difference between $F_n(x,t)$ and $F^{\delta}_n(x,t)$.
 \item[(\romannumeral 4)]
    With the exponential decay property of both $F_n(x,t)$ and $F^{\delta}_n(x,t)$, we complete the proof. In fact, we can decompose the difference between $F(x,t)$ and $F^\delta(x,t)$ into two terms. The first term is small because of the argument in step (\romannumeral 3), and the second term is small due to the exponential decay property.
\end{itemize}
We first state the following result for the c.d.f $F_{T_1}(t)$ for $T_1$; the proof can be found in section $2$ of \cite{liu2020rigorous}.
\begin{proposition}
\label{FT1}
For any fixed $T>0$, $F_{T_1}(\cdot)$ is uniformly continuous on $[0, T]$, i.e., $\forall \varepsilon >0$, there exists $\eta_1 = \eta_1(T)>0$, s.t. $\forall t,t'\in [0,T]$, $|t-t'|\le \eta_1$, we have
\begin{equation*}
\left|F_{T_1}(t)-F_{T_1}(t')\right|< \varepsilon.
\end{equation*}
\end{proposition}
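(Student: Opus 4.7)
The plan is to reduce the uniform continuity of $F_{T_1}$ on the compact interval $[0,T]$ to the absence of atoms in the law of $T_1$. A monotone non-decreasing right-continuous function on a compact interval is uniformly continuous if and only if it is continuous, and continuity at a point $t_0$ is equivalent to $\prob(T_1 = t_0) = 0$. Thus the statement collapses to verifying that $T_1$ has an atomless distribution on $[0,T]$.

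To extract this, I would observe that prior to the first firing event the process $X_t$ coincides pathwise with the standard Ornstein-Uhlenbeck process $\widetilde X_t$ driven by the same Brownian motion and having the same initial datum $x \le 1 - \beta$. Hence $T_1 = \widetilde T_1$ almost surely, where $\widetilde T_1 := \inf\{t \ge 0 : \widetilde X_t = 1\}$. The problem becomes classical: show that the first passage time of the OU process to a fixed threshold has a continuous distribution.

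I would carry this out via the killed-process representation. If $\widetilde p_t(x,y)$ denotes the transition density of the OU process killed upon hitting level $1$, then
\[
1 - F_{T_1}(t) \;=\; \int_{-\infty}^{1} \widetilde p_t(x,y)\,dy,
\]
and $\widetilde p_t$ solves the linear Fokker-Planck equation on $(-\infty, 1)$ with an absorbing Dirichlet condition at $y=1$. Parabolic regularity yields smoothness of the right-hand side for $t > 0$; meanwhile the gap $\beta > 0$ between $x$ and the threshold gives $F_{T_1}(0) = \lim_{t \to 0^+} F_{T_1}(t) = 0$, handling right-continuity at the origin. This shows $F_{T_1}$ is continuous on $[0,T]$, whence uniformly continuous by compactness of $[0,T]$.

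The main technical obstacle is ruling out pathological behavior of the hitting-time density near $t=0$ and justifying the smoothness claim rigorously. I would handle this through a Girsanov comparison with Brownian motion: the drift $-\widetilde X_t$ is bounded on the event $\{\widetilde X_s < 1,\ s \le t\}$, so the law of $(\widetilde X_s)_{s \le \widetilde T_1}$ is absolutely continuous with respect to the corresponding Brownian path measure with a bounded Radon-Nikodym derivative. Since the hitting time of Brownian motion starting at $x < 1$ to the level $1$ has the explicit inverse Gaussian density --- which is smooth and uniformly bounded on every compact subset of $(0,\infty)$ --- the bounded density transfer under Girsanov delivers atomlessness of $\widetilde T_1$ and hence the desired continuity of $F_{T_1}$ on $[0,T]$.
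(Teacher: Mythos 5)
Your argument is essentially correct and the conclusion holds, but the Girsanov step contains an error in its justification. The paper itself delegates the proof of this proposition to Section 2 of \cite{liu2020rigorous}; from the way that work is invoked elsewhere in the present paper (the Green's function estimate (38) and Proposition 3.1, which establish $f_{T_1} \in C[0,T]$ with $f_{T_1}(0)=0$), the cited proof runs along your first variant --- the killed Ornstein--Uhlenbeck transition density and parabolic regularity. Your Girsanov comparison with Brownian motion is a genuinely different and more elementary route, and in fact yields the stronger conclusion that $f_{T_1}$ is bounded on $(0,T]$, i.e.\ Lipschitz rather than merely uniform continuity of $F_{T_1}$.

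However, the justification you offer for boundedness of the Radon--Nikodym derivative is wrong on two counts. First, the drift $-\widetilde X_s$ is \emph{not} bounded on $\{\widetilde X_s < 1,\ s\le t\}$: that event only bounds $-\widetilde X_s$ from below by $-1$, while $-\widetilde X_s\to+\infty$ as $\widetilde X_s\to-\infty$. Second, even a genuinely bounded drift $b$ would not by itself bound the Girsanov factor $\exp\bigl(\int_0^t b\,dW - \tfrac12\int_0^t b^2\,dt\bigr)$, since the stochastic integral is unbounded. What actually makes the density bounded is special to the linear mean-reverting drift $b(x)=-x$: applying It\^o's formula to $\int_0^t\omega_s\,d\omega_s$ under the reference measure (the law of $\sqrt2\,B$ started at $x$, so $\sigma^2=2$), the log Radon--Nikodym density is
\[
\log Z_t \;=\; -\frac{\omega_t^2}{2\sigma^2} + \frac{\omega_0^2}{2\sigma^2} + \frac{t}{2} - \frac{1}{2\sigma^2}\int_0^t \omega_s^2\,ds \;\le\; \frac{x^2}{2\sigma^2} + \frac{T}{2},
\]
uniformly over all paths, because the two path-dependent terms carry negative signs. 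It is this unconditional upper bound --- not boundedness of the drift --- that transfers the bounded inverse-Gaussian first-passage density from Brownian motion to $\widetilde T_1$. With that repair your proposal is complete.
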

Now we compare the difference between $F^{\delta}_0(x,t)$ and $F_0(x,t)$ together with $F_{T^{\delta}_1}(t)$ and $F_{T_1}(t)$.

\begin{lemma}
\label{F0 dif delta}
Fix any $T>0$ and for any $\varepsilon >0$, there is an $\eta_0>0$ such that for all  $\delta \in (0,\eta_0]$ and all $(x,t) \in \mathbb{R}\times[0,T]$,

\begin{equation}
\left|F^{\delta}_0(x,t)-F_0(x,t)\right|
<\varepsilon.
\label{0 difference}
\end{equation}
At the same time, we have:
\beq
\left|F_{T_1}(t)-F_{T^{\delta}_1}(t)\right|<\varepsilon.
\label{hitting difference}
\eeq
\end{lemma}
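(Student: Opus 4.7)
The plan is to couple $X_t$ and $X_t^\delta$ via a common Brownian motion and an independent Poisson random measure driving the jumps, and then reduce both estimates to a single hitting-time comparison. Let $OU_t$ be the standard OU process with $OU_0 = y$ driven by $B_t$, and let $T_1$ be its first hitting time of $1$. Because $\lambda^\delta \equiv 0$ on $(-\infty, 1]$ by \eqref{continuerate}, no jump of $X_t^\delta$ can occur before $OU_t$ first reaches $1$; moreover, by the construction in \eqref{state dependent}, $X_t^\delta$ continues to coincide with $OU_t$ past $T_1$ up to its first Poisson firing time $T_1^\delta$. Under this coupling one therefore has $T_1^\delta \ge T_1$ a.s., together with $X_t = OU_t$ on $\{t < T_1\}$ and $X_t^\delta = OU_t$ on $\{t < T_1^\delta\}$.

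From these identifications, $F_0(x,t) = \prob(OU_t \le x,\, T_1 > t)$ and $F_0^\delta(x,t) = \prob(OU_t \le x,\, T_1^\delta > t)$, so
\[
0 \le F_0^\delta(x,t) - F_0(x,t) = \prob(OU_t \le x,\, T_1 \le t < T_1^\delta) \le \prob(T_1 \le t < T_1^\delta),
\]
uniformly in $x$, and likewise $|F_{T_1}(t) - F_{T_1^\delta}(t)| = \prob(T_1 \le t < T_1^\delta)$. Both bounds therefore reduce to showing $\sup_{t \in [0,T]} \prob(T_1 \le t < T_1^\delta) < \varepsilon$ for all sufficiently small $\delta$.

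To control this supremum I would pick a small $\eta > 0$ (to be fixed below) and write
\[
\prob(T_1 \le t < T_1^\delta) \le \prob(t - \eta < T_1 \le t) + \prob(T_1 \le T,\ T_1^\delta - T_1 > \eta),
\]
using that $T_1 \le t - \eta$ and $T_1^\delta > t$ force $T_1^\delta - T_1 > \eta$. Proposition \ref{FT1} makes the first term smaller than $\varepsilon / 2$ uniformly in $t \le T$ once $\eta$ is small. The second term is independent of $t$, and for it I would apply the strong Markov property at $T_1$ and condition on the post-$T_1$ OU trajectory, so that
\[
\prob\bigl(T_1^\delta - T_1 > \eta \,\big|\, \CF_{T_1},\, (OU_{T_1 + s})_{s \ge 0}\bigr) = \exp\!\left(-\int_0^\eta \lambda^\delta(OU_{T_1 + s})\, ds\right).
\]
Since $\lambda^\delta(z) \ge \delta^{-1} \ind_{\{z > 1 + \delta\}}$, the exponent is at least $\delta^{-1} m_\delta$ with $m_\delta := |\{s \in (0, \eta] : OU_{T_1 + s} > 1 + \delta\}|$. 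Because $OU$ is non-degenerate and starts from $1$ just after $T_1$, it a.s.\ spends positive Lebesgue time strictly above $1$ on $(0, \eta]$, so $m_\delta \uparrow m_0 > 0$ a.s.\ by monotone convergence, and dominated convergence then yields $\prob(T_1 \le T,\ T_1^\delta - T_1 > \eta) \to 0$ as $\delta \to 0^+$.

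The main obstacle will be the clean justification of $m_0 > 0$ a.s., which I would handle by invoking the strong Markov property of $OU$ at $T_1$ together with the classical fact that a regular one-dimensional diffusion immediately accumulates positive occupation time on both sides of any level it hits. Once this is granted, uniformity in $x$ is automatic (the bound $\prob(T_1 \le t < T_1^\delta)$ is $x$-free), uniformity in $t \in [0, T]$ is built into the splitting, and the threshold $\eta_0 = \eta_0(\varepsilon, T)$ is obtained by first fixing $\eta$ from Proposition \ref{FT1} and then shrinking $\delta$ so that $\prob(T_1^\delta - T_1 > \eta,\ T_1 \le T) < \varepsilon/2$.
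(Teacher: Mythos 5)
Your proposal is correct and follows essentially the same route as the paper: couple $X$ and $X^\delta$ along a common OU path so that $T_1^\delta\ge T_1$, reduce both estimates to $\sup_{t\le T}\prob(T_1\le t<T_1^\delta)$, split with the uniform continuity of $F_{T_1}$ (Proposition \ref{FT1}), and show the residual probability vanishes because the post-$T_1$ OU path a.s.\ accumulates positive occupation time strictly above $1$ on any initial interval. The one step you leave as a cited ``classical fact'' --- that $m_0>0$ a.s.\ --- is exactly where the paper does the work, via the Blumenthal $0$--$1$ law applied to the driving Brownian motion to show $\inf\{t>0:\hat Z_t>1\}=0$ a.s., whence by path continuity the excursion set above $1$ is nonempty and open and so has positive Lebesgue measure; your appeal to regularity of the diffusion is a valid substitute but rests on the same underlying $0$--$1$ law.
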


\begin{proof}
Noting that for any $y \in \BR$, 
\beq
\label{OU process}
Z^y_t = e^{-t}y+\sqrt{2} e^{-t}\int_0^te^sdB_s
\eeq
is an O-U process starting from $y$, we couple two stochastic processes $X_{t\land T_1}$ and $X^{\delta}_{t\land T^{\delta}_1}$ as follows:\\
(\romannumeral 1) Let $\{Z^0_t\}_{t\ge0}$ denote the standard O-U process starting from $0$ and let $\Gamma$ obey the exponential distribution $\exp(1)$ and be independent of the process $Z^0_t$.\\
(\romannumeral 2) Consider the following two stopping times:
\begin{equation*}
T^0_1=\inf \lbrace{t\ge0: Z^0_t=1\rbrace},\quad T^{0,\delta}_1=\inf \lbrace{t\ge0:\int^t_0 \lambda ^{\delta}(Z^0_s)ds=\Gamma\rbrace}.
\end{equation*}
By definition, $Z^0_{t\land T^0_1}$ and $Z^0_{t\land T^{0,\delta}_1}$ are identically distributed as $X_{t\land T_1}$ and $X^{\delta}_{t\land T^{\delta}_1}$, and at the same time we have $T^0_1\le T^{0,\delta}_1$. Thus one has
\begin{equation}
F_0(x,t)=\prob^0(Z^0_t\le x,T^0_1>t) \\
\le F^{\delta}_0(x,t)=\prob^0(Z^0_t\le x,T^{0,\delta}_1>t)
\end{equation}
while
\begin{equation}
F^{\delta}_0(x,t)-F_0(x,t)\le \prob^0(T^0_1\le t, T^{0,\delta}_1>t).
\label{F0d1}
\end{equation}
By the strong Markov property of the O-U process, if we restart $Z^0_t$ at $T^0_1$, then $\lbrace{Z^0_{s+T^0_1}\rbrace}_{s\ge0}$ forms a new O-U process starting at $1$ which is independent of $T^0_1$. Denote this process by $\hat{Z}_s$. Moreover, defining a new stopping time $\hat{T}_1^{0,\delta}=T^{0,\delta}_1-T^0_1$ with respect to $\{\hat{Z}_s\}_{s\ge0}$, one may have:
\begin{equation}
\prob^0(T^0_1\le t, T^{0,\delta}_1>t)=\int^t_0 \prob^1(\hat{T}_1^{0,\delta}>t-s)dF_{T^0_1}(s).
\end{equation}

As we have previously seen in Proposition \ref{FT1}, $F_{T_1}(s)$ is uniformly continuous on $[0,T]$ and note that $T_1$ and $T_1^0$ have the same distribution. Thus $\forall$ $\varepsilon>0$, $\exists$ $\eta_1 = \eta_1(T)>0$ s.t. $\forall s\ge0$, we have $F_{T_1}(s+\eta_1)-F_{T_1}(s)<\varepsilon$. Then for $t<\eta_1$, we have
\begin{equation*}
\prob^0(T^0_1\le t,T^{0,\delta}_1>t)\le \int^t_0 dF_{T_1}(s)\le F_{T_1}(\eta_1)< \epsilon.
\end{equation*}
For $t>\eta_1$, we have
\begin{align}
\begin{split}
\prob^0(T^0_1\le t,T^{0,\delta}_1>t)&=\int^t_0 \prob^1(\hat{T}_1^{0,\delta}>t-s)dF_{T_1}(s)\\
&=\int^{t-\eta_1}_0 \prob^1(\hat{T}_1^{0,\delta}>t-s)dF_{T_1}(s)+\int^t_{t-\eta_1}\prob^1(\hat{T}_1^{0,\delta}>t-s)dF_{T_1}(s)\\
&\le \prob^1(\hat{T}_1^{0,\delta}>\eta_1)+\left(F_{T_1}(t)-F_{T_1}(t-\eta_1)\right)\\
&\le \prob^1(\hat{T}_1^{0,\delta}>\eta_1)+\varepsilon.
\end{split}\label{F0d2}
\end{align}
Hence, it suffices to prove that for any fixed $\eta_1 >0$,

\begin{equation}
\lim_{\delta \to 0}\prob^1(\hat{T}_1^{0,\delta}>\eta_1)=0.\label{T10}
\end{equation}

For any $\eta_1 >0$ and $\delta >0$, consider the following random subset generated by $\hat{Z}_s$ which denotes the time that $\hat{Z}_s$ is above the level $1+\delta$ before $\eta_1$:
\begin{equation}
\hat{I}(\eta_1, \delta):=\{s< \eta_1: \hat{Z}_s> 1+\delta\}.
\end{equation}
By definition, one may see that
\bae
\label{calculations}
&\left\{\hat{T}_1^{0,\delta}\le \eta_1\right\} \supset \left\{\int_{T_1^0}^{T_1^0+\eta_1}\lambda^\delta(Z_s^0)ds\ge \Gamma\right\} \\
=& \left\{\int_{\{s<\eta_1: \hat{X}_s\in [1,1+\delta]\}}\lambda^\delta(\hat{Z}_s)ds+\int_{\{s<\eta_1: \hat{Z}_s> 1+\delta\}} \lambda^\delta(\hat{Z}_s)ds\ge \Gamma\right\}\\
\supset& \left\{\int_{\{s<\eta_1: \hat{Z}_s> 1+\delta\}} \lambda^\delta(\hat{Z}_s)ds\ge \Gamma\right\}=\left\{\frac{\mathfrak{L}\left(\hat{I}(\eta_1, \delta)\right)}{\delta}\ge \Gamma\right\}
\eae
Thus,
\begin{equation}
\label{compare}
\prob^1(\hat{T}_1^{0,\delta}\le \eta_1)\ge \prob^1\left(\frac{\mathfrak{L}\left(\hat{I}(\eta_1, \delta)\right)}{\delta}\ge \Gamma\right)
\end{equation}
where $\mathfrak{L}$ denotes the Lebesgue measure in $\mathbb{R}$. Recall that $\Gamma \sim \exp(1)$. Thus it suffices to prove that
$$
\lim_{\delta \to 0}\prob^1\left(\frac{\mathfrak{L}\left(\hat{I}(\eta_1, \delta)\right)}{\delta}\ge \Gamma\right)=1.
$$
First we consider the case $\delta=0$. With \eqref{OU process}, we know that
\begin{equation*}
\hat Z_t=e^{-t}+\sqrt 2 \int^t_0 e^{-(t-s)}dB_s
\end{equation*}
Moreover, by the pathwise continuity of $\hat{X}_t$, one may see that $\hat{I}(\eta_1, 0)$ is a.s. either an empty set or an nonempty open set. We first show it is a.s. nonempty. Letting $\widetilde{T_1}=\inf \lbrace{t>0: \hat Z_t>1\rbrace}$, it suffices to prove that
\beq
\label{aim}
\prob^1(\widetilde{T_1}=0)=1.
\eeq

The proof of \eqref{aim} relies on the $0 - 1$ Law for standard Brownian motion. See Theorem $7.2.3$ on Page $362$ of \cite{durrett2019probability} for details.\\
(\romannumeral 1) For any $\Delta t>0$, $\{\widetilde{T_1}<\Delta t\} \supset \{\hat Z_{\Delta t}>1\} = \{e^{-\Delta t}-\sqrt 2 \int^{\Delta t}_0 e^{-(\Delta t-s)}dB_s>1 \}\in \CF_{\Delta t}$, where $\CF$ is the natural filtration generated by ${\hat Z_s}$.\\
(\romannumeral 2) Thus for $\Delta t_n \to 0$, $\{\widetilde{T_1}=0\} \supset \{\hat X_{\Delta t_n}>1, \text{i.o.}\} \in \CF_{0+}^B$, where $\CF_{0+}^B$ is the infinitesimal increment $\sigma$-field of Brownian motion $\{B_t\}_{t\ge 0}$ and i.o. stands for infinitely often.\\
(\romannumeral 3) By the $0 - 1$ Law, we now only need to prove that $\prob^1(\hat Z_{\Delta t_n}>1, \text{i.o.})>0$.
At the same time, with \eqref{OU process} we have
\begin{equation}
\prob^1(\hat X_{\Delta t_n}>1)=\prob^1\left(N(0,1)>\frac{1-e^{\Delta t_n}}{ \sqrt{1-e^{-2\Delta t_n}}}\right).
\end{equation}
Now noting that $\frac{1-e^{\Delta t_n}}{ \sqrt{1-e^{-2\Delta t_n}}}=O(\Delta t_n)\to 0$ as $n\to \infty$, we have $\lim_{n\to \infty}\prob^1(\hat Z_{\Delta t_n}>1)=\frac{1}{2}>0$. Thus we have proved \eqref{aim} and hence
\begin{equation}
\prob^1(\mathfrak{L}(\hat{I}(\eta_1,0))>0)=1.
\end{equation}
Note that events $\{\mathfrak{L}(\hat{I}(\eta_1, \frac{1}{n})>\frac{1}{n}\Gamma\} \to \{\mathfrak{L}(\hat{I}(\eta_1,0))>0\}$ as $n\to$ $\infty$.
Thus for any $\varepsilon >0$, $\exists N$ s.t. for all $n\ge N$

\begin{equation}
\prob^1\left(\mathfrak{L}(\hat{I}(\delta_1,\frac{1}{n}))>\frac{1}{n} \Gamma\right)\ge 1-\varepsilon.
\end{equation}
Fixing any $\delta \le \frac{1}{N}$ and recalling that $\varepsilon$ is arbitrary, together with \eqref{compare}, we get \eqref{T10}. Combining \eqref{T10}, \eqref{F0d1} and \eqref{F0d2}, when $\delta$ is small we have 
$$
F^{\delta}_0(x,t)-F_0(x,t)\le \prob^0(T^0_1\le t, T^{0,\delta}_1>t)\le \prob^1(\hat{T}_1^{0,\delta}>\eta_1)+\varepsilon\le 2\varepsilon.
$$
and thus the proof of Lemma \ref{F0 dif delta} is complete.

\end{proof}

Before proceeding with the iterative argument, we need to iteratively derive the uniform continuity of $F_n(x,t)$. Recalling Proposition $3.1$ of \cite{liu2020rigorous}, we immediately get the following essential uniform continuity of $F_0(x,t)$.
\begin{proposition}
\label{F0 continuity}
Fix any $T>0$ and for the $\eta_1=\eta_1(T)>0$ in Proposition \ref{FT1}, there exists 
$\eta_2=\eta_2(T)\in(0,\eta_1)$ such that for all $x\in \mathbb{R}$ and $t,t'\in [\eta_1,T]$, 
$|t'-t|<\eta_2$, we have
\begin{equation}
\left|F_0(x,t)-F_0(x,t')\right|<\varepsilon.
\end{equation}\label{F0}
\end{proposition}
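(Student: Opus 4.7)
My plan is to leverage the regularity of the sub-density $f_0(x,t)$ of the killed Ornstein--Uhlenbeck process, defined via $\prob^0(X_t \le x,\, T_1 > t) = \int_{-\infty}^{x \wedge 1} f_0(z, t)\, dz$. As established in Proposition~3.1 of \cite{liu2020rigorous}, $f_0$ is a classical solution of the killed O--U Fokker--Planck equation $\partial_t f_0 = \partial_z(z f_0) + \partial_{zz} f_0$ on $(-\infty,1) \times (0, \infty)$ with absorbing condition $f_0(1, t) = 0$, and both $f_0$ and $\partial_z f_0$ enjoy uniform Gaussian-type tail estimates on $\mathbb{R} \times [\eta_1, T]$. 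Given this regularity, the uniform continuity of $F_0$ in $t$ reduces to a uniform bound on $\partial_t F_0$ in $x$.

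Concretely, differentiating under the integral and integrating by parts in the Fokker--Planck equation gives
\[
\partial_t F_0(x, t) \;=\; \bigl[z f_0(z, t) + \partial_z f_0(z, t)\bigr]_{z = -\infty}^{z = x \wedge 1}.
\]
For $x \ge 1$, the upper endpoint collapses to $\partial_z f_0(1^-, t)$, which coincides with $-f_{T_1}(t)$ (the density of the first hitting time) and is uniformly bounded on $[\eta_1, T]$. For $x < 1$, the upper endpoint is $x f_0(x, t) + \partial_z f_0(x, t)$, which is uniformly bounded over $(x, t) \in \mathbb{R} \times [\eta_1, T]$ thanks to the Gaussian tail and derivative estimates on $f_0$. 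The $z \to -\infty$ contribution vanishes by the same tail bounds. Hence $|\partial_t F_0(x, t)| \le C = C(\eta_1, T)$ uniformly, and the mean-value theorem yields $|F_0(x, t) - F_0(x, t')| \le C |t - t'|$; choosing $\eta_2 = \min\{\eta_1, \varepsilon/C\}$ finishes the argument.

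The main technical obstacle is controlling the term $x f_0(x, t) + \partial_z f_0(x, t)$ as $|x| \to \infty$: the linear factor $x$ must be absorbed by rapid decay of $f_0$ and $\partial_z f_0$. This is handled by dominating the killed O--U density by the standard O--U Gaussian kernel (since killing can only remove probability mass) and applying parabolic regularity to propagate the pointwise bound to the derivative. The lower bound $t \ge \eta_1 > 0$ is essential here: it converts the singular Dirac initial datum at $z = 0$ into smooth, rapidly decaying data via short-time parabolic smoothing, without which $\partial_t F_0$ would blow up near $t = 0$. A purely probabilistic alternative via the Markov property at time $t$ and a splitting of $\int |F_0^z(x, t'-t) - \mathbf{1}_{z \le x}| f_0(z, t)\, dz$ into regions near and far from $z = x$ would also work, but obtaining uniformity in $x$ against the indicator discontinuity is more delicate than the PDE route above.
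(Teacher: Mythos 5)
Your argument is correct and proceeds along essentially the same lines as the paper. The paper's (terse) proof appeals to Proposition 3.1 of \cite{liu2020rigorous} for the regularity of $f_0$, and the quantitative version later in Section \ref{convergence rate} bounds $|\partial_t f_0(y,s)|$ directly by the Green's-function estimate $C s^{-1}\exp(-C_0 y^2/s)$ and double-integrates; you instead push the time derivative through the Fokker--Planck equation onto boundary terms $z f_0 + \partial_z f_0$ and then invoke the same Gaussian decay and parabolic-smoothing estimates, which is a mild algebraic variant of the same underlying argument, with the same role for the restriction $t \ge \eta_1 > 0$ to tame the singular initial datum.
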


Next for $F_1(x,t)$, by Lemma $2.2$ of \cite{liu2020rigorous}, we have 
\begin{align*}
\begin{split}
F_1(x,t)&=\int^t_0 F_0(x,t-s)dF_{T_1}(s),\\
F_1(x,t')&=\int^{t'}_0 F_0(x,t'-s)dF_{T_1}(s).
\end{split}
\end{align*}
Now we prove:
\begin{corollary}
Fix any $T>0$ and recall the definition of $\eta_1, \eta_2$ in Proposition \ref{FT1}-\ref{F0}. For all $0\le t<t'<T$ such that $t'-t<\eta_2$, and any $x\in \mathbb{R}$, we always have
\begin{align}
\left|F_1(x,t)-F_1(x,t')\right|\le 3\epsilon.
\end{align}
\end{corollary}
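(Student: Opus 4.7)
The plan is to exploit the iteration identity
\[
F_1(x,t) = \int_0^t F_0(x, t-s)\,dF_{T_1}(s)
\]
and split $F_1(x,t') - F_1(x,t)$ into a ``bulk'' term and a ``tail'' term. Concretely, I would write
\[
F_1(x,t') - F_1(x,t) = \int_0^t \bigl[F_0(x, t'-s) - F_0(x, t-s)\bigr]\,dF_{T_1}(s) + \int_t^{t'} F_0(x, t'-s)\,dF_{T_1}(s).
\]
The tail term is immediately bounded by $F_{T_1}(t') - F_{T_1}(t)$ since $0 \le F_0 \le 1$, and Proposition \ref{FT1} together with $t'-t < \eta_2 < \eta_1$ (recall the choice of $\eta_2$ in Proposition \ref{F0 continuity}) shows this is at most $\varepsilon$.

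The bulk term is where the main subtlety lies, because Proposition \ref{F0 continuity} provides uniform continuity of $F_0(x, \cdot)$ only on $[\eta_1, T]$, not all the way down to $0$. I would therefore further split the $s$-integration at $(t - \eta_1)^+ := \max(t-\eta_1, 0)$. On $\{0 \le s \le (t - \eta_1)^+\}$, which is empty unless $t \ge \eta_1$, both $t - s$ and $t' - s$ lie in $[\eta_1, T]$ and differ by less than $\eta_2$, so $|F_0(x, t'-s) - F_0(x, t-s)| < \varepsilon$ by Proposition \ref{F0 continuity}, giving a contribution of at most $\varepsilon$. On the complementary region $\{(t - \eta_1)^+ < s \le t\}$, I bound the integrand trivially by $1$; the $F_{T_1}$-mass of this interval is $F_{T_1}(t) - F_{T_1}((t - \eta_1)^+)$, which by Proposition \ref{FT1} is again at most $\varepsilon$ since the interval has length at most $\eta_1$. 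Summing the three contributions yields the desired bound $3\varepsilon$.

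The argument is essentially a routine $\varepsilon$-splitting, and no step presents a genuine obstacle. The only thing to be careful about is the failure of uniform continuity of $F_0$ near $t = 0$; this is circumvented by shunting the bad region into a term controlled by uniform continuity of the hitting-time distribution $F_{T_1}$, which is precisely why the corollary inherits the threshold $\eta_2$ from Proposition \ref{F0 continuity}. This same pattern of isolating a small $s$-window near $s = t$ via $F_{T_1}$-continuity and controlling the rest via $F_0$-continuity should iterate cleanly, yielding uniform continuity of every $F_n(x, t)$ at the cost of an $n$-dependent constant.
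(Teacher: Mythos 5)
Your argument is correct and is essentially the paper's own proof: both rely on the iteration identity, split the convolution integral at $s = t-\eta_1$ to separate the region where Proposition \ref{F0 continuity} applies from the small $s$-window near $t$ controlled by the $F_{T_1}$-continuity of Proposition \ref{FT1}, and then add a tail term over $[t,t']$. The only (cosmetic) difference is that you unify the two cases $t \ge \eta_1$ and $t < \eta_1$ via the truncation $(t-\eta_1)^+$, whereas the paper treats them separately.
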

\begin{remark}
Here we no longer need $t$, $t'$ to be away from 0.
\end{remark}
\begin{proof}
First, supposing $t\in [\eta_1,T]$, we may write:
\begin{equation*}
\begin{aligned}
F_1(x,t)&=\int^{t-\eta_1}_0F_0(x,t-s)dF_{T_1}(s)+\int^t_{t-\eta_1}F_0(x,t-s)dF_{T_1}(s)\\
&=: A_1+A_2,
\end{aligned}
\end{equation*}
while
\begin{equation*}
\begin{aligned}
F_1(x,t')&=\int^{t-\eta_1}_0 F_0(x,t'-s)dF_{T_1}(s)+\int^{t'}_{t-\eta_1}F_0(x,t'-s)dF_{T_1}(s)\\
&=:B_1+B_2.
\end{aligned}
\end{equation*}
Using the uniform continuity of $F_{T_1}(t)$ and since $F_0(x,t)\le 1$, we have 
\begin{equation*}
\left\{\begin{array}{l}
A_2\le \int^t_{t-\eta_1} dF_{T_1}(s)\le \varepsilon\\
B_2\le \int^{t'-\eta_1}_{t-\eta_1} dF_{T_1}(s)+\int^{t'}_{t'-\eta_1} dF_{T_1}(s)\le 2\varepsilon,
\end{array}\right.
\end{equation*}
which together with Proposition \ref{F0} imply that
\begin{equation*}
\begin{split}
\left|F_1(x,t)-F_1(x,t')\right|&\le |A_1-B_1|+|A_2-B_2|\\
&\le \int^{t-\eta_1}_0\left| F_0(x,t'-s)- F_0(x,t-s)\right|dF_{T_1}(s)+2\varepsilon \\
&\le \int^{t-\eta_1}_0 \varepsilon dF_{T_1}(s)+2\varepsilon \le 3\varepsilon.
\end{split}
\end{equation*}

When $t\le \eta_1$, we have
\begin{equation*}
\begin{split}
F_1(x,t)=&\int^t_0 F_0(x, t-s)dF_{T_1}(s)\le \int^{\eta_1}_0  dF_{T_1}(s)=F_{T_1}(\eta_1)\le \varepsilon.
\end{split}
\end{equation*}
And note that $\eta_2\le \eta_1$, while $t'-t \in (0,\eta_2)$
\begin{equation*}
F_1(x,t')\le \int^t_0 dF_{T_1}(s)+\int^{t'}_t dF_{T_1}(s)\le 2\varepsilon.
\end{equation*}
Thus $\left|F_1(x,t)-F_1(x,t')\right|\le 2\varepsilon$.

\end{proof}
Similarly, one may inductively prove:
\begin{corollary}
Fix any $T>0$ and for all $n\ge 1$, any $x\in \mathbb{R}$ and all $0\le t \le t'<T$ such that $t'-t<\eta_2$, we have
\begin{equation*}
\left|F_n(x,t)-F_n(x,t')\right|\le (2n+1)\varepsilon.
\end{equation*}\label{Fn}
\end{corollary}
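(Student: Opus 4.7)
The natural strategy is induction on $n$, with $n=1$ being exactly the preceding corollary. For the inductive step, assume the bound $(2n-1)\varepsilon$ holds for $F_{n-1}$ (uniformly in $x\in\mathbb R$ and in pairs $0\le t\le t'<T$ with $t'-t<\eta_2$), and exploit the iteration identity from Proposition \ref{iteration},
$$F_n(x,t)=\int_0^t F_{n-1}(x,t-s)\,dF_{T_1}(s),$$
together with the uniform continuity of $F_{T_1}$ from Proposition \ref{FT1}. The argument will faithfully mirror the $n=1$ case.

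For $t\ge\eta_1$, I would split both $F_n(x,t)$ and $F_n(x,t')$ at $s=t-\eta_1$, writing them as $A_1+A_2$ and $B_1+B_2$ respectively. On the bulk region $s\in[0,t-\eta_1]$, the points $t-s$ and $t'-s$ both lie in $[\eta_1,T]$ with $(t'-s)-(t-s)=t'-t<\eta_2$, so the inductive hypothesis applied pointwise in $s$ yields $|A_1-B_1|\le (2n-1)\varepsilon$. On the tail, using $F_{n-1}\le 1$ and the uniform continuity of $F_{T_1}$ (splitting $B_2$'s integration interval, which has length $<\eta_1+\eta_2\le 2\eta_1$, into two sub-intervals of length $\le\eta_1$), one obtains $A_2\le\varepsilon$ and $B_2\le 2\varepsilon$. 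Since $A_2$ and $B_2$ are both nonnegative, $|A_2-B_2|\le\max(A_2,B_2)\le 2\varepsilon$, which combined with the bulk bound produces exactly $(2n+1)\varepsilon$. For $t<\eta_1$, the trivial bound $F_n(x,t)\le F_{T_1}(t)$ gives $F_n(x,t)\le F_{T_1}(\eta_1)\le\varepsilon$, and since $t'<t+\eta_2<2\eta_1$, a similar argument yields $F_n(x,t')\le 2\varepsilon$, so $|F_n(x,t)-F_n(x,t')|\le 2\varepsilon\le(2n+1)\varepsilon$.

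The induction is essentially mechanical, and there is no genuine analytic obstacle beyond the careful bookkeeping already present in the $n=1$ case. The only point one must be attentive to is the non-negativity observation $|A_2-B_2|\le\max(A_2,B_2)$, which is what keeps the constant linear in $n$: employing the cruder triangle bound $|A_2-B_2|\le A_2+B_2\le 3\varepsilon$ would inflate the increment at each step from $2\varepsilon$ to $3\varepsilon$ and thus destroy the clean $(2n+1)\varepsilon$ conclusion. Once this is noticed, the iteration closes without further subtlety.
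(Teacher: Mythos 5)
Your proof is correct and takes the same route the paper intends: the paper itself gives only the $n=1$ case in detail and then writes ``Similarly, one may inductively prove,'' and your inductive step mirrors that $n=1$ argument exactly (split at $s=t-\eta_1$, apply the inductive hypothesis on the bulk, control the tail via uniform continuity of $F_{T_1}$, and handle $t<\eta_1$ separately). Your remark that $|A_2-B_2|\le\max(A_2,B_2)$ rather than the coarse $A_2+B_2$ is the implicit observation the paper also relies on in its $n=1$ estimate to land on $2\varepsilon$ rather than $3\varepsilon$ for the tail term.
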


Now with the uniform continuity of $F_n(x,t)$, we can continue the proof of Theorem \ref{relationship}. First, parallel to the proof of Lemma \ref{F0 dif delta}, we can consider the difference between $F_1(x,t)$ and $F^{\delta}_1(x,t)$. Specifically, we have the following lemma.
\begin{lemma}
Fix any $T>0$ and recall the definition of $\delta$ in Lemma \ref{F0 dif delta}. We have for any $(x,t)\in \mathbb{R}\times[0,T]$, $|F_1(x,t)-F^{\delta}_1(x,t)|\le 5\varepsilon$.\label{F1 dif delta}
\end{lemma}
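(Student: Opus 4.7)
My approach is to extend the technique of Lemma \ref{F0 dif delta} by one iteration, using the Markov iteration identity $F_1(x,t)=\int_0^t F_0(x,t-s)\,dF_{T_1}(s)$ and its analogue $F_1^\delta(x,t)=\int_0^t F_0^\delta(x,t-s)\,dF_{T_1^\delta}(s)$, combined with the uniform continuity results of Propositions \ref{FT1} and \ref{F0} and the zeroth-order closeness from Lemma \ref{F0 dif delta}.

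Concretely, I would decompose
\[
F_1(x,t)-F_1^\delta(x,t) = \underbrace{\int_0^t [F_0(x,t-s) - F_0^\delta(x,t-s)]\,dF_{T_1}(s)}_{A} + \underbrace{\int_0^t F_0^\delta(x,t-s)\,d[F_{T_1}(s) - F_{T_1^\delta}(s)]}_{B},
\]
and bound $|A|\le\varepsilon F_{T_1}(t)\le\varepsilon$ immediately via Lemma \ref{F0 dif delta}. For $B$ I would adapt the boundary/bulk split used in the proof of Corollary \ref{Fn}: write $B=B_{\mathrm{bulk}}+B_{\mathrm{bdry}}$ with $B_{\mathrm{bdry}}=\int_{t-\eta_1}^t$ and $B_{\mathrm{bulk}}=\int_0^{t-\eta_1}$, where $\eta_1$ comes from Proposition \ref{FT1}. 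In the short boundary window, a Stieltjes integration by parts converts $B_{\mathrm{bdry}}$ into two boundary-evaluation terms (each $\le\varepsilon$ via $|F_{T_1}-F_{T_1^\delta}|\le\varepsilon$) plus a remainder controlled by the small variation of $F_0^\delta(x,\cdot)$ over the time interval $[0,\eta_1]$, which is $O(\varepsilon)$ by Proposition \ref{FT1}. In the bulk region, where Proposition \ref{F0} ensures uniform continuity of $F_0(x,\cdot)$ in $t$, I would partition $[0,t-\eta_1]$ with mesh size $\eta_2$, approximate $F_0^\delta(x,t-s)$ by its left-endpoint step function on each subinterval (with sup-norm error at most $2\varepsilon$ by combining Lemma \ref{F0 dif delta} and Proposition \ref{F0}), and invoke Abel's summation formula to reduce the resulting integral against the signed measure $d[F_{T_1}-F_{T_1^\delta}]$ to a finite telescoping sum whose terms are all $O(\varepsilon)$ thanks to $|F_{T_1}-F_{T_1^\delta}|\le\varepsilon$ at every partition point. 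Collecting the contributions yields the claimed bound $|F_1(x,t)-F_1^\delta(x,t)|\le 5\varepsilon$.

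The main technical obstacle is controlling $B$, since $F_0^\delta(x,\cdot)$ is not manifestly of bounded variation in $t$ and a naive Stieltjes integration by parts against the signed measure $d[F_{T_1}-F_{T_1^\delta}]$ is therefore unavailable. The step-function approximation plus Abel summation described above bypasses this difficulty by replacing the missing total variation bound with a count of partition increments, each of which is controlled by uniform continuity. A subtle point is that the partition size $N\sim T/\eta_2$ depends on $\varepsilon$, so one must rescale the $\varepsilon$ appearing in Lemma \ref{F0 dif delta} to absorb the factor $N$ and recover the clean constant $5$ in front of $\varepsilon$; this rescaling is standard for closing such iterative estimates and parallels the scheme already used in Lemma \ref{F0 dif delta}.
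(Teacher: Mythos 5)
Your decomposition of $F_1-F_1^\delta$ into $A$ and $B$ is the same algebraic split the paper uses (the paper pairs the $F_0$-difference with $dF_{T_1^\delta}$ and the $dF$-difference with $F_0$, a cosmetic variant), and your bound $|A|\le\varepsilon$ is fine. The genuine gap is in how you propose to control $B$. Your plan treats $B=\int F_0^\delta\,d[F_{T_1}-F_{T_1^\delta}]$ purely at the level of distribution functions, via a step-function approximation of $F_0^\delta(x,t-\cdot)$ followed by Abel summation against the signed measure. Two things go wrong. First, the step-function approximation error must be integrated against $|d(F_{T_1}-F_{T_1^\delta})|$, whose total variation on $[0,T]$ is $O(1)$ (both are CDFs), not $o(1)$ — the fact that $|F_{T_1}-F_{T_1^\delta}|\le\varepsilon$ pointwise does not make the total variation of the difference small. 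So that error contribution is on the order of several $\varepsilon$'s by itself and cannot be driven down by "rescaling"; it already breaks the constant $5$. Second, the telescoping sum from Abel summation has $N\sim T/\eta_2$ terms each of size $O(\varepsilon)$, and absorbing the factor $N$ by shrinking the $\varepsilon$ fed into Lemma~\ref{F0 dif delta} changes the admissible range of $\delta$, so the resulting statement is not the one claimed (nor the one needed for the inductive Lemma~\ref{Fn dif delta}, which reuses the \emph{same} $\delta$-threshold).

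What you are missing is the coupling constructed in Lemma~\ref{F0 dif delta}: the stopping times $T_1^0$ and $T_1^{0,\delta}$ live on the \emph{same} probability space, with $T_1^0\le T_1^{0,\delta}$ pathwise and $\prob(T_1^{0,\delta}-T_1^0>\eta_2)$ small by \eqref{T10}. The paper's proof exploits this by rewriting the troublesome Stieltjes integral as a difference of expectations,
\[
\int_0^t F_0(x,t-s)\,d\bigl(F_{T_1}(s)-F_{T_1^\delta}(s)\bigr)
=\e^0\!\left[F_0(x,t-T_1^0)\,\mathbbm{1}_{\{T_1^0\le t\}}\right]
-\e^0\!\left[F_0(x,t-T_1^{0,\delta})\,\mathbbm{1}_{\{T_1^{0,\delta}\le t\}}\right],
\]
then splitting on the good event $G=\{T_1^0\le t-\eta_1-\eta_2,\;T_1^{0,\delta}-T_1^0\le\eta_2\}$. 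On $G$, both arguments $t-T_1^0$ and $t-T_1^{0,\delta}$ lie in $[\eta_1,T]$ and are within $\eta_2$ of each other, so Proposition~\ref{F0 continuity} gives $|F_0(x,t-T_1^0)-F_0(x,t-T_1^{0,\delta})|\le\varepsilon$ directly — no partition, no Abel summation, no $N$-dependence. The complementary event contributes $\prob^0(G^c\cap\{T_1^0\le t\})\le 3\varepsilon$ via Proposition~\ref{FT1} and \eqref{T10}. This pathwise comparison is strictly stronger information than the pointwise CDF bound $|F_{T_1}-F_{T_1^\delta}|\le\varepsilon$ that your argument relies on, and it is precisely what makes the term $B$ controllable with a clean $O(\varepsilon)$ constant.
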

\begin{proof}
Note that
\begin{equation*}
\begin{cases}
F_1(x,t)=\int^t_0 F_0(x,t-s) dF_{T_1}(s),\\
F_1^{\delta}(x,t)=\int^t_0 F_0^{\delta}(x,t-s)dF_{T^{\delta}_1}(s).
\end{cases}
\end{equation*}
For any $t\in [0,T]$, we introduce the intermediate term:
\begin{equation*}
\int^t_0 F_0(x,t-s) dF_{T^{\delta}_1}(s).
\end{equation*}
Recalling \eqref{0 difference} and \eqref{hitting difference} together with the fact that $F^{\delta}_0,F_0\in [0,1]$, we have 
\begin{equation}
\begin{aligned}
&\left|F_1(x,t)-F^{\delta}_1(x,t)\right|\\
\le & \int^t_0 \left|F^{\delta}_0(x,t-s)-F_0(x,t-s) \right|dF_{T^{\delta}_1}(s)+ \left| \int^t_0 F_0(x,t-s) d\left(F_{T_1}(s)-F_{T_1^{\delta}}(s)\right) \right| \\
\le & \varepsilon F_{T^{\delta}_1}(t)+\left| \e^0\left[F_0(x,t-T^0_1) \mathbbm{1}_{\lbrace{T^0_1\le t\rbrace}}\right]-\e^0\left[F_0(x,t-T_1^{0,\delta}) \mathbbm{1}_{\lbrace{T^{0,\delta}_1\le t\rbrace}}\right] \right|.
\end{aligned}
\label{F1 difference}
\end{equation}
Now recall the definition of $\eta_2$ in Proposition \ref{F0 continuity} and consider the following \lq\lq good event\rq\rq
\beq \label{eventG}
G:=\lbrace{T^0_1\le t- \eta_1- \eta_2, T^{0,\delta}_1-T^0_1 \le \eta_2\rbrace}.
\eeq
Recalling Proposition \ref{FT1}-\ref{F0 continuity} and \eqref{T10}, we have
\begin{equation}
\begin{aligned}
&\left| \e^0\left[F_0(x,t-T^0_1) \mathbbm{1}_{\lbrace{T^0_1\le t\rbrace}}\right]-\e^0\left[F_0(x,t-T_1^{0,\delta}) \mathbbm{1}_{\lbrace{T^{0,\delta}_1\le t\rbrace}}\right] \right|\\
\le & \e^0\left[\left|F_0(x,t-T^0_1)-F_0(x,t-T_1^{0,\delta})\right| \cdot \mathbbm{1}_G \right]+\prob^0(G^C\cap \{T^0_1\le t\})\\
\le & \varepsilon +\prob^0(T^0_1\in (t-\eta _1-\eta_2,t])+\prob^0(T^{0,\delta}_1-T^0_1>\eta_2)\le 4\varepsilon.
\end{aligned}
\end{equation}
Combining with \eqref{F1 difference}, we complete the proof.

\end{proof}

Now one may inductively prove
\begin{lemma}
\label{Fn dif delta}
Fix any $T>0$ and recall the definition of $\delta$ in Lemma \ref{F0 dif delta}. We have for all $n\ge 1$ and any $x\in \mathbb{R}\times[0,T]$, $|F_n(x,t)-F^{\delta}_n(x,t)|\le (n+2)^2\varepsilon$.
\end{lemma}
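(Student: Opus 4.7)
The plan is to argue by induction on $n$, with the base case $n=1$ already given by Lemma \ref{F1 dif delta} since $(1+2)^2\varepsilon = 9\varepsilon \ge 5\varepsilon$. For $n\ge 2$, I would take as the inductive hypothesis $|F_{n-1}(x,t)-F_{n-1}^\delta(x,t)| \le (n+1)^2\varepsilon$ uniformly on $\mathbb{R}\times[0,T]$, and exploit the two convolution identities of Proposition \ref{iteration}, which express both $F_n$ and $F_n^\delta$ in terms of $F_{n-1}$, $F_{n-1}^\delta$, $F_{T_1}$, and $F_{T_1^\delta}$. The natural move, in direct analogy with the treatment of $n=1$ in Lemma \ref{F1 dif delta}, is to introduce the intermediate quantity $\int_0^t F_{n-1}(x,t-s)\,dF_{T_1^\delta}(s)$ and write
\begin{equation*}
F_n(x,t) - F_n^\delta(x,t) = \int_0^t [F_{n-1}(x,t-s) - F_{n-1}^\delta(x,t-s)]\,dF_{T_1^\delta}(s) + \int_0^t F_{n-1}(x,t-s)\,d[F_{T_1}(s) - F_{T_1^\delta}(s)].
\end{equation*}

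The first integral is dominated immediately by $(n+1)^2\varepsilon \cdot F_{T_1^\delta}(t) \le (n+1)^2\varepsilon$ via the inductive hypothesis. For the second, I would recycle the argument of Lemma \ref{F1 dif delta}: rewrite it as $\e^0[F_{n-1}(x,t-T_1^0)\mathbbm{1}_{\{T_1^0\le t\}}] - \e^0[F_{n-1}(x,t-T_1^{0,\delta})\mathbbm{1}_{\{T_1^{0,\delta}\le t\}}]$ and then condition on the good event $G$ from \eqref{eventG}. On $G$ the time increment $T_1^{0,\delta}-T_1^0$ is at most $\eta_2$, so the uniform continuity of $F_{n-1}$ supplied by Corollary \ref{Fn} yields a pointwise bound of $(2n-1)\varepsilon$; on $G^c\cap\{T_1^0\le t\}$ the crude bound $0\le F_{n-1}\le 1$ together with the estimate $\prob^0(G^c\cap\{T_1^0\le t\}) \le 3\varepsilon$ already verified in the proof of Lemma \ref{F1 dif delta} contributes at most $3\varepsilon$. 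Summing gives $(2n+2)\varepsilon$ for the second integral, and combining with the first yields
\begin{equation*}
|F_n(x,t) - F_n^\delta(x,t)| \le (n+1)^2\varepsilon + (2n+2)\varepsilon = (n^2+4n+3)\varepsilon \le (n+2)^2\varepsilon,
\end{equation*}
which closes the induction.

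I do not expect any serious technical obstacle beyond the bookkeeping of constants; the hard part will simply be choosing an inductive shape that is stable under the iteration. The second integral inevitably contributes an $O(n)\varepsilon$ error from the linear-in-$n$ modulus of continuity of $F_{n-1}$ supplied by Corollary \ref{Fn}, so a quadratic ansatz $(n+2)^2\varepsilon$ is required to absorb this error in closed form, since the slack $(n+2)^2-(n+1)^2 = 2n+3$ just exceeds the $2n+2$ produced at each step. Keeping this growth polynomial rather than exponential is the key point, because it is precisely this polynomial-in-$n$ control that will combine with the uniform exponential decay $F_n^\delta(x,t)\le\exp(-\theta n+T)$ from Proposition \ref{exp decay} to upgrade the present bound into the full marginal-distribution convergence \eqref{conv}.
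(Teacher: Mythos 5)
Your proof is correct and follows essentially the same route as the paper: the same inductive decomposition via the intermediate convolution $\int_0^t F_{n-1}(x,t-s)\,dF_{T_1^\delta}(s)$, the same good/bad-event split, and the same appeal to Corollary \ref{Fn} for the modulus of continuity of $F_{n-1}$. The only (inconsequential) difference is that the paper redefines the good event for $n\ge1$ as $\{T_1^0\le t-\eta_2,\,T_1^{0,\delta}-T_1^0\le\eta_2\}$ (using the remark after Corollary \ref{Fn} that $t,t'$ need no longer be bounded away from $0$), which saves one factor of $\varepsilon$ and yields $(n+1)^2\varepsilon+(2n+1)\varepsilon$, whereas you keep the original $G$ from \eqref{eventG} and get $(n+1)^2\varepsilon+(2n+2)\varepsilon$; both are still $\le(n+2)^2\varepsilon$, so the induction closes either way.
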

\begin{proof}
By Lemma \ref{F0 dif delta} and Lemma \ref{F1 dif delta}, the result has been shown to be true for $n=0$ and $1$. Now suppose Proposition 1 holds for all $k\le n-1$. Now for $k=n$, we have:
\begin{equation}
\left\{\begin{array}{l}
F_n(x,t)=\int^t_0 F_{n-1}(x,t-s)dF_{T_1}(s),\\
F_n^{\delta}(x,t)=\int^t_0 F^{\delta}_{n-1}(x,t-s)dF_{T_1^{\delta}}(s).
\end{array}\right.
\end{equation}
Again there is
\begin{equation}
\begin{split}
\left|F_n(x,t)-F^{\delta}_n(x,t)\right|&\le \int^t_0\left|F^{\delta}_{n-1}(x,t-s)-F_{n-1}(x,t-s)\right|dF_{T_1^{\delta}}(s)\\
&+\left|\e^0\left[F_{n-1}(x,t-T^0_1) \mathbbm{1}_{\{T^0_1\le t\}}\right]-\e^0\left[F_{n-1}(x,t-T_1^{0,\delta}) \mathbbm{1}_{\{T_1^{0,\delta}\le t \}}\right]\right|.
\end{split}
\end{equation}
Recall the ``good event" $G=\{T^0_1\le t-\eta_2, T^{0,\delta}_1-T^0_1 \le \eta_2\}$. We have
\begin{align}
\begin{split}
&\left| \e^0\left[F_{n-1}(x,t-T^0_1) \mathbbm{1}_{\{T^0_1\le t\}}\right]-\e^0\left[F_{n-1}(x,t-T_1^{0,\delta}) \mathbbm{1}_{\{T_1^{0,\delta}\le t \}}\right] \right|\\
\le & \e^0\left[\left|F_{n-1}(x,t-T^0_1)-F_{n-1}(x,t-T_1^{0,\delta})\right| \cdot \mathbbm{1}_G \right]+\prob^0(G^{C} \cap \{T^0_1\le t\})\\
\le &(2n-1)\varepsilon +\prob^0(T^0_1\in (t-\eta_2,t])+\prob^0(T^{0,\delta}_1-T^0_1>\eta_2)\\
\le &(2n+1)\varepsilon.
\end{split}
\end{align}
Thus we have
\begin{equation*}
\left|F_n(x,t)-F^{\delta}_n(x,t)\right|\le [(n+1)^2+(2n+1)]\varepsilon \le (n+2)^2\varepsilon.
\end{equation*}

\end{proof}

Finally, for all $T<\infty$, and any $t\in [0,T]$, $x\le 1$,
\begin{equation*}
F_n(x,t)\le \prob^0(T_n\le t),\quad F^{\delta}_n(x,t)\le \prob^0(T^{\delta}_n\le t).
\end{equation*}
By the argument in Lemma $2.3$ of \cite{liu2020rigorous}, we have already implied that $\exists$ a constant $C$ depending only on T such that
\begin{align}
\prob^0(T^{\delta}_n\le t)\le \prob^0(T_n\le t)\le \exp(-Cn).
\label{exp}
\end{align}
What's more, we can decompose the difference between $F(x,t)$ and $F^\delta(x,t)$ into two terms. That is,
\begin{equation}
\left|F(x,t)-F^\delta(x,t)\right|\le \sum_{i=0}^{n}\left|F_i(x,t)-F_i^\delta(x,t)\right|+\sum_{i=n+1}^{+\infty}\left|F_i(x,t)-F_i^\delta(x,t)\right|.
\label{total difference}
\end{equation}
Now using Lemma \ref{Fn dif delta} we know that the first term of \eqref{total difference} is small, while the second term is small due to the exponential decay property \eqref{exp}. Thus the proof of \eqref{conv} is complete.

\subsubsection{Weak convergence in the sense of process}\label{weak}

\

\noindent

Now we can prove weak convergence in the sense of stochastic process $X_\cdot^\delta \to X_\cdot $ as $\delta \to 0^+$. We first prove the convergence of the finite dimensional marginal distribution and then use the relative compactness to conclude weak convergence for process.

\begin{proposition}
	\label{finite dim convergence}
	For all integer $n>0$, $0<t_1<t_2<\cdots <t_n<\infty$ and $-\infty<a_i<b_i<1$, $i=1,2,\cdots,n$, we have 
	\beq
	\prob^0(X^\delta_{t_i}\in (a_i,b_i], i=1,2,\cdots, n) \to \prob^0(X_{t_i}\in (a_i,b_i], i=1,2,\cdots, n) \quad \text{as} \quad \delta \to 0^+,
	\eeq
	i.e.,
	\beq
	(X^\delta_{t_1}, X^\delta_{t_2},\cdots, X^\delta_{t_n}) \to (X_{t_1}, X_{t_2} \cdots, X_{t_n}).
	\eeq
	where $\prob^0(\cdot)$ denotes the process starts from $0$.
\end{proposition}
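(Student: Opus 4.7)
The plan is to induct on $n$. The base case $n=1$ is immediate from the marginal convergence \eqref{conv}, since $\prob^0(X^\delta_{t_1}\in(a_1,b_1])=F^\delta(b_1,t_1)-F^\delta(a_1,t_1)\to F(b_1,t_1)-F(a_1,t_1)=\prob^0(X_{t_1}\in(a_1,b_1])$. For the inductive step, fix $n\ge 2$, set $s:=t_n-t_{n-1}>0$, and introduce the transition functions $\psi^\delta(y):=\prob^y(X^\delta_s\in(a_n,b_n])$ and $\psi(y):=\prob^y(X_s\in(a_n,b_n])$. Both $X^\delta$ (by its SDE construction) and $X$ (by \cite{liu2020rigorous}) are strong Markov, so
\[
\prob^0\bigl(X^\delta_{t_i}\in(a_i,b_i],\ i\le n\bigr)=\e^0\Bigl[\prod_{i=1}^{n-1}\mathbbm{1}_{(a_i,b_i]}(X^\delta_{t_i})\cdot\psi^\delta(X^\delta_{t_{n-1}})\Bigr],
\]
with the analogous identity for $X$.

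Subtracting the two identities, I would decompose the error as $\mathrm{I}+\mathrm{II}$, where
\[
\mathrm{I}:=\e^0\Bigl[\prod_{i=1}^{n-1}\mathbbm{1}_{(a_i,b_i]}(X^\delta_{t_i})\,(\psi^\delta-\psi)(X^\delta_{t_{n-1}})\Bigr]
\]
quantifies the transition-kernel error at the last step, and
\[
\mathrm{II}:=\e^0\Bigl[\prod_{i=1}^{n-1}\mathbbm{1}_{(a_i,b_i]}(X^\delta_{t_i})\,\psi(X^\delta_{t_{n-1}})\Bigr]-\e^0\Bigl[\prod_{i=1}^{n-1}\mathbbm{1}_{(a_i,b_i]}(X_{t_i})\,\psi(X_{t_{n-1}})\Bigr]
\]
measures the residual discrepancy of the $(n-1)$-dimensional outer expectation against the fixed continuous weight $\psi$. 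For $\mathrm{I}$, I would first upgrade \eqref{conv} to hold uniformly for starting points in $K_\beta:=(-\infty,1-\beta]$ for any $\beta>0$. Inspecting the proofs of Lemma \ref{F0 dif delta} and Lemma \ref{Fn dif delta}, the only ingredient that depends on the starting point is the uniform continuity modulus of $F_{T_1^y}(\cdot)$, which is uniform for $y\in K_\beta$ by Theorem 2 of \cite{liu2020rigorous}; the estimate $\prob^1(\hat T_1^{0,\delta}>\eta_1)\to 0$ is starting-point independent. Hence $\sup_{y\in K_\beta}|\psi^\delta(y)-\psi(y)|\to 0$, and bounding
\[
|\mathrm{I}|\le \sup_{y\in K_\beta}|\psi^\delta(y)-\psi(y)|+2\,\prob^0(X^\delta_{t_{n-1}}>1-\beta)
\]
reduces $\mathrm{I}$ to zero after first choosing $\beta$ small (so $F(1-\beta,t_{n-1})$ is close to $1$, since $X_{t_{n-1}}<1$ almost surely) and then $\delta$ small (using $F^\delta(1-\beta,t_{n-1})\to F(1-\beta,t_{n-1})$).

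For term $\mathrm{II}$, the strong Feller property \eqref{feller} makes $\psi$ continuous on $(-\infty,1)$. The inductive hypothesis gives convergence of all $(n-1)$-dimensional rectangle probabilities, which combined with tightness of the marginal laws (a consequence of the uniform-in-$\delta$ bound on $f^\delta(\cdot,t)$ coming from the Fokker--Planck estimates for \eqref{eq:FKJD}) upgrades to weak convergence of $(X^\delta_{t_1},\dots,X^\delta_{t_{n-1}})$ to $(X_{t_1},\dots,X_{t_{n-1}})$. A Portmanteau sandwich, approximating each indicator $\mathbbm{1}_{(a_i,b_i]}$ from above and below by continuous bumps and passing to the limit in the continuous test function $\prod\varphi_i(y_i)\psi(y_{n-1})$, then forces $\mathrm{II}\to 0$ once we verify that the limiting joint law places no mass on the rectangle boundary --- which follows by inductively convolving the killed O--U transition kernel from \cite{liu2020rigorous} to obtain absolute continuity of the joint law of $(X_{t_1},\dots,X_{t_{n-1}})$. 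The principal obstacle is the dual task of (i) extending the marginal convergence uniformly in the starting point while (ii) bridging the inductive hypothesis, stated for rectangles, to the weighted expectation involving the continuous factor $\psi$; once those two bookkeeping steps are completed, the strong Feller regularity of $\psi$ and the vanishing of $\prob^0(X^\delta_{t_{n-1}}\ge 1-\beta)$ close the argument.
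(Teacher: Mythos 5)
Your proposal is correct and follows essentially the same route as the paper's proof in Appendix~\ref{detailed proof}: condition at $t_{n-1}$ via the Markov property, split the error into a transition-kernel error and an outer-law error, handle the first by uniform-in-starting-point marginal convergence, and the second by weak convergence of the $(n-1)$-dimensional law against a bounded weight that is continuous off a null set (your term $\mathrm{I}$ is the paper's $I_2$, and your $\mathrm{II}$ is the paper's $I_1$).

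One place you over-engineer: for term $\mathrm{I}$, you attempt to upgrade the uniform marginal convergence to hold over the non-compact set $K_\beta=(-\infty,1-\beta]$ and then add the tail correction $2\,\prob^0(X^\delta_{t_{n-1}}>1-\beta)$. This correction is identically zero once $\beta<1-b_{n-1}$, because the factor $\mathbbm{1}_{(a_{n-1},b_{n-1}]}(X^\delta_{t_{n-1}})$ inside the expectation already constrains the argument of $\psi^\delta-\psi$ to the compact interval $[a_{n-1},b_{n-1}]$. The paper exploits this directly: Corollary~\ref{uniform convergence in distribution} is stated for starting points in a compact $[a,b]\subset(-\infty,1)$, which is exactly what the rectangle hypothesis $a_{n-1}>-\infty$, $b_{n-1}<1$ provides, so no extension to $K_\beta$ (and hence no appeal to uniformity of $F_{T_1^y}$ over a half-line) is needed. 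Likewise, for term $\mathrm{II}$ your Portmanteau sandwich with continuous bump approximations is a hand-rolled version of the continuous-mapping statement for bounded measurable functions whose discontinuity set is null under the limit law (the paper invokes this directly from Durrett, part (vi) of the Portmanteau theorem), using the test function $\varphi^{(n)}$ that equals the transition probability $\psi$ on the rectangle and zero off it; the two arguments are equivalent.
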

When $n=1$, the result for $X_t, X^\delta_t$ starting from $0$ has been proved in Section \ref{1 dim distribution}. By the same proof we have convergence  with respect to any $X_0^\delta=X_0=x_0$, where $x_0$ belongs to some compact subset of $(-\infty,1)$. Moreover, the $\delta$ can be chosen to be uniform, i.e., 
\begin{corollary}
	\label{uniform convergence in distribution}
	For any  $-\infty<a<b<1$, and any $x_0 \in [a,b]$, we let $F^{\delta,x_0}(x,t)$ and 
	$F^{x_0}(x,t)$ be the c.d.f. of $X_t^\delta$ and $X_t$ starting from $x_0$. Then for any 
	$T<\infty$ and $\epsilon>0$, there exists $\eta_0=\eta(T,a,b,\epsilon)>0$ s.t. for all 
	$0<\delta<\eta_0$, $x_0\in [a,b]$, $(x,t) \in \mathbb{R}\times[0,T]$, we have 
	\beq
	|F^{\delta,x_0}(x,t) - F^{x_0}(x,t)|\le \epsilon.
	\eeq
\end{corollary}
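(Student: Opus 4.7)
The plan is to re-run the proof of part (iv) of Theorem \ref{relationship} with careful tracking of the dependence on the starting point $x_0 \in [a,b]$, observing that the key probabilistic estimates can all be made uniform once $x_0$ is confined to a compact subset of $(-\infty,1)$. Concretely, I decompose $F^{\delta,x_0}(x,t)=\sum_{n\ge 0}F_n^{\delta,x_0}(x,t)$ and $F^{x_0}(x,t)=\sum_{n\ge 0}F_n^{x_0}(x,t)$ by the number of firing events up to time $t$, and estimate the difference term by term.

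For the base case, I would generalize Lemma \ref{F0 dif delta} by coupling $X^{\delta,x_0}_{\cdot\wedge T_1^{\delta,x_0}}$ and $X^{x_0}_{\cdot\wedge T_1^{x_0}}$ through a single OU path $Z^{x_0}_t=e^{-t}x_0+\sqrt{2}\,e^{-t}\int_0^t e^s\,dB_s$ starting from $x_0$, exactly as in the excerpt. After $Z^{x_0}$ first hits $1$ at time $T_1^{0,x_0}$, the strong Markov property restarts a fresh OU process at $1$, and the crucial bound $\prob^1(\hat T_1^{0,\delta}>\eta_1)\to 0$ is \emph{independent} of $x_0$. The only $x_0$-dependent ingredients are the moduli of continuity of $F_{T_1^{x_0}}(\cdot)$ on $[0,T]$ and of $F_0^{x_0}(x,\cdot)$, and these can be made uniform over $x_0\in[a,b]$: since $[a,b]\subset(-\infty,1-\beta]$ for some $\beta>0$, the arguments behind Propositions \ref{FT1} and \ref{F0 continuity} (cf.\ Theorem 2 of \cite{liu2020rigorous}) yield a common modulus, because the density $f_{T_1^{x_0}}$ is bounded uniformly on $[0,T]$ for $x_0\in[a,b]$.

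Given this uniform base case, I would iterate exactly as in Corollary \ref{Fn} and Lemma \ref{Fn dif delta}, using the convolution identities of Proposition \ref{iteration}. A key observation is that after the first firing both $X_\cdot$ and $X^\delta_\cdot$ restart at $0$ irrespective of $x_0$, so the inductive step only invokes the $x_0=0$ regularity of $F_{n-1}$, $F^{\delta}_{n-1}$ together with the uniform modulus of $F_{T_1^{x_0}}$. Each step loses an additive $O(\varepsilon)$, producing the uniform bound $|F_n^{x_0}(x,t)-F_n^{\delta,x_0}(x,t)|\le (n+2)^2\varepsilon$ for every $x_0\in[a,b]$ and $(x,t)\in\BR\times[0,T]$.

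Finally, the exponential tail \eqref{eq_exp_decay} gives $F_n^{x_0}(x,t),F_n^{\delta,x_0}(x,t)\le e^{-\theta n+T}$ with a constant $\theta>0$ that depends only on $T$ (the only starting point that matters in this estimate is $0$, reached after the first firing). Splitting the total error as in \eqref{total difference}, choosing $n$ large enough to absorb the tail and then $\delta$ small enough to control the first $n$ terms yields the conclusion uniformly in $x_0\in[a,b]$ and $(x,t)\in\BR\times[0,T]$. The main obstacle is the uniform-in-$x_0$ modulus of continuity of the first hitting time distribution $F_{T_1^{x_0}}$; this is precisely where the compactness hypothesis on $[a,b]$ (equivalently, the strict gap $1-b>0$) enters, and without it the argument would not close.
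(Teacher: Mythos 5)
Your proposal matches the paper's intended argument: the paper proves the $x_0=0$ marginal convergence in Section \ref{1 dim distribution} and then simply asserts (without a written proof) that ``by the same proof'' the convergence holds starting from any $x_0$ in a compact subset of $(-\infty,1)$, with $\delta$ chosen uniformly. You have correctly identified where the $x_0$-dependence actually lives: only in the base-case density $F_0^{x_0}$ and the first-hitting-time law $F_{T_1^{x_0}}$, since the iteration formula $F_n^{x_0}(x,t)=\int_0^t F_{n-1}^{0}(x,t-s)\,dF_{T_1^{x_0}}(s)$ feeds back in only the $x_0=0$ regularity after the first reset, and the crucial estimate $\prob^1(\hat T_1^{0,\delta}>\eta_1)\to 0$ is about an OU excursion started at $1$ and so does not see $x_0$ at all. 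The uniform modulus for $F_{T_1^{x_0}}$ over $x_0\in[a,b]\subset(-\infty,1-\beta]$ is available from the cited uniform continuity of $f_{T_1^y}$ for $y\le 1-\varepsilon_0$, and the exponential tail is uniform because $T_n^{x_0}\ge T_{n-1}^0$ in distribution. This is exactly the verification the paper leaves implicit, and your proof is correct.
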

Thus we conclude the case for $n=1$ and then by the strong Feller  property for $X_t$ and induction, we can get Proposition \ref{finite dim convergence}; the detailed proof can be found in Appendix \ref{detailed proof}.
  
With the finite dimensional weak convergence shown as above, the rest of the proof of weak convergence for processes follows from a standard relative compactness argument. Note that $(\mathbb{R}, \mathcal{B})$ is clearly complete and separable. By Theorem 3.7.2 and Theorem 3.7.8 of \cite{ethier2009markov}, in order to show that $X_\cdot^\delta \to X_\cdot$, it suffices to prove the following:\\
(\romannumeral1) For any $T<\infty$ and $\epsilon>0$, $\exists$ a compact set $\Gamma \subset (-\infty, +\infty)$ such that for any $\delta>0$, we have 
\beq
\label{compact set}
\prob^0(X_t^\delta \in \Gamma, \forall t \in [0,T]) \ge 1-\epsilon.
\eeq
(\romannumeral2) For any $T<\infty$ and $\epsilon>0$, $\exists$ $\sigma > 0$ such that for any $\delta>0$, we have 
\beq
\label{continuity}
\prob^0(w'(X_\cdot^\delta, \sigma, T) \ge \epsilon) \le \epsilon.
\eeq
where 
$$
w'(X^\delta_\cdot, \sigma, T)=\inf_{\{t_i\}} \max_{i} \sup_{s,t \in [t_{i-1}, t_i)}|X^\delta_s-X^\delta_t|
$$
and $\{t_i\}$ ranges over all partitions of the form $0=t_0<t_1<\cdots<t_{n-1}<t_n=T$ with $t_i-t_{i-1}>\sigma$ for all $i$. See (6.2) in Chapter 3 of \cite{ethier2009markov} for details.

In order to verify the conditions above, we first need to recall the constructions in Section 2.2 of \cite{liu2020rigorous}. For claim (\romannumeral1), note that $T_n\le T_n^\delta$ for all $\delta>0$ and by exponential decay, we have for any fixed $T<\infty$ and $\epsilon>0$, $\exists$ $n_0$, s.t. 
$$
\prob^0(T^\delta_{n_0}\le T)\le \prob^0(T_{n_0}\le T) <\frac{\epsilon}{2}.
$$ 
Note that by Doob's inequality, we have that there is an $M_0<\infty$ s.t. 
$$
\prob^0(\max_{t\le T} |Y_t^{(i)}|>M_0) \le \prob^0(\max_{t\le T} |\int_0^t e^s dB_s|>M_0) <\frac{\epsilon}{2n_0}.
$$ 
Now consider $\Gamma=[-M_0, M_0]$ and event $A=\{T^\delta_{n_0}>T\} \cap \cup_{i=1}^{n_0}\{ \max_{t\le T} |Y_t^{(i)}|<M_0\}$, where
$\prob^0(A)>1-\frac{\epsilon}{2}-n_0 \cdot \frac{\epsilon}{2n_0} =1-\epsilon$. Then recalling that the trajectory of $X_t^\delta$ can be decomposed by parts of $Y_t^{(i)}, t\in [0,T]$, thus in event $A$ for any $\delta>0, t \in [0,T]$, $X_t^\delta$ belongs to $\Gamma$, which gives \eqref{compact set}.

Finally, in order to check claim (\romannumeral2), we first note that by Proposition 3.1 of \cite{liu2020rigorous}, $f_{T_1}(\cdot) \in C[0,T]$ with $f_{T_1}(0)=0$. Thus for all $\epsilon>0$, $\exists$ $n_0<\infty$, s.t. $f_{T_1}(t) < \frac{\epsilon}{2T}$ for all $t \le \frac{T}{n_0}$. Then consider the event $B_1=\cap_{i=1}^{n_0} \{\tau_i> \frac{T}{n_0}\}$, where 
\beq
\prob^0(B_1^c) \le  n_0 \prob^0(\tau_1 \le \frac{T}{n_0}) \le n_0 \cdot \int_0^{n_0} \frac{\epsilon}{2T} dt <\frac{\epsilon}{2}.
\eeq

Moreover, note that the OU-process $Y_t^{(i)}$ is a.s. uniformly continuous. Thus $\exists$ $\sigma_1>0$ s.t. for each $i$, 
\beq
\prob^0(\exists s<t \in [0,T], t-s< \sigma_1, |Y_t^{(i)}-Y_s^{(i)}| \ge \epsilon) <\frac{\epsilon}{2n_0}.
\eeq
Then consider event 
$$
B_2=\cap_{i=1}^{n_0}(\forall s<t \in [0,T], t-s< \sigma_1, |Y_t^{(i)}-Y_s^{(i)}| < \epsilon),
$$
and $\prob^0(B_2^c)\le n_0\cdot \frac{\epsilon}{2n_0} \le \frac{\epsilon}{2}$. Thus let $\sigma=\min\{ \frac{\sigma_1}{3}, \frac{T}{2n_0} \}$ and $B=B_1\cap B_2$. Then in the event $B$, the jump-diffusion process $\{X_t^\delta\}_{t\in [0,T]}$ is composed of at most $n_0$ uniformly continuous O-U process each with length at least $\frac{T}{n_0}\ge \sigma$. Thus one may always construct a partition with 
$$
\max_{i}\sup_{s,t \in [t_{i-1},t_i)} |X^\delta(s) -X^\delta(t)|< \epsilon.
$$
So we have 
$$
\prob^0(w'(X_\cdot^\delta, \sigma, T) \le \epsilon) \ge \prob^0(B) > 1-\epsilon,
$$
which gives \eqref{continuity} and thus the proof of the weak convergence for process is complete.

\subsection{Weak Convergence for the Mean Firing Rate}\label{firing rate conv}

\

\noindent

In this section we consider the convergence of the mean firing rate by the iteration approach. Clearly, the density function $f^\delta(x,t)$ for the jump-diffusion process $X_t^\delta$ in \eqref{state dependent} and the mean firing rate $N^\delta(t)=\int_{\mathbb R} f^\delta(y,t) \lambda^\delta(y) dy$ admit the following expansions
\beq
f^\delta(x,t) = \sum_{n=0}^\infty f^\delta_n(x,t), \quad N^\delta(t)= \sum_{n=0}^{\infty} N^\delta_n(t).
\eeq
where  $ f_n^\delta(x,t) $ is the density function of the measure induced by $F_n^\delta(\cdot,t)$ in \eqref{delta sub distribution} and for $n\ge 0$,
\[
N^\delta_n (t):=\int_{\mathbb R} f^\delta_{n}(y,t) \lambda^\delta(y) dy.
\]
To prove \eqref{firing rate convergence}, we need to build a connection between $N^\delta_n(t)$ and the p.d.f. $f_{T^\delta_n}(t)$ of the jumping time in \eqref{delta jumping time sequence}. We first derive the Dynkin's formula for the killed process $\widetilde{X^\delta_t}$ that is obtained by stopping the process $X_t^\delta$ at the first jumping time $T_1^\delta$. To be specific, 
\beq
\label{OUkilling}
\widetilde{X^\delta_t}= 
\begin{cases}X_t^\delta,\quad \ \ & 
t<T_1^\delta,\\ 
X_{T_1^\delta}^\delta, \quad \ \ & t\ge T_1^\delta, 
\end{cases} 
\eeq

First, we derive the Fokker-Planck equation for $f_0^\delta$ and its decay property for further iteration calculations.
\begin{lemma}
Let $f_0^\delta(x,t) $ be the density of the measure induced by $F_0^\delta(\cdot,t)$ defined in \eqref{delta sub distribution}. Then it is the classical solution of the following equation
\begin{equation}
\label{f0delta equation}
\left\{
\begin{aligned}
\frac{\partial f^\delta_0}{ \partial t} - \frac{\partial }{\partial x}\left( x f^\delta_0 \right) - \frac{\partial ^2 f^\delta_0}{\partial x^2}&=-\lambda^\delta(x)f^\delta_0 (x,t), \quad x\in \mathbb R, \quad t>0,
\\
f^\delta_0(x,0)& = \delta(x)\  \mbox{in}\ \CD'(-\infty, +\infty),
\end{aligned}
\right.
\end{equation}
where $\delta(x)$ denotes the Dirac function. Moreover, for any $t>0$, and $|x|$ sufficiently large, one should have $\exists$ $C>0$ s.t.
\beq
\label{f0delta estimation}
\max\left\{\left|f_0^\delta(x,t)\right|, \quad \left|\frac{\partial}{\partial x}f_0^\delta(x,t)\right|, \quad \left|\frac{\partial^2}{\partial x^2}f_0^\delta(x,t)\right|\right\}\le \exp(-Cx^2),
\eeq
\end{lemma}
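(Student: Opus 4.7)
The plan is to treat $f_0^\delta$ as the sub-probability density of the killed process $\widetilde{X_t^\delta}$ in \eqref{OUkilling}, derive the PDE \eqref{f0delta equation} from a Dynkin-type identity, and then obtain the Gaussian bound \eqref{f0delta estimation} by comparison with the free OU density together with standard parabolic interior regularity.

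For the equation, I would first fix $\varphi\in C_c^\infty(\mathbb R)$ and observe that, by definition of $f_0^\delta$,
\[
\int_{\mathbb R} \varphi(x) f_0^\delta(x,t)\,dx = \e^0\bigl[\varphi(X_t^\delta)\mathbbm{1}_{\{T_1^\delta > t\}}\bigr].
\]
Applying It\^o's formula to $\varphi(X_s^\delta)$ on $\{s<T_1^\delta\}$, where $X_s^\delta$ solves the OU SDE \eqref{eq:OU2} until the first Poisson event (of state-dependent intensity $\lambda^\delta$), and then taking expectations as in Theorem~\ref{FK for jump diffusion} but retaining only the ``not yet jumped'' contribution, yields
\[
\int \varphi f_0^\delta(\cdot,t)\,dx = \varphi(0) + \int_0^t \int_{\mathbb R} \bigl[\mathcal L \varphi(x) - \lambda^\delta(x)\varphi(x)\bigr] f_0^\delta(x,s)\,dx\,ds,
\]
with $\mathcal L\varphi(x) = -x\varphi'(x) + \varphi''(x)$. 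Integrating by parts gives the weak form of \eqref{f0delta equation} with initial datum $\delta_0$ in $\CD'(\mathbb R)$. Because the drift $-x$ is smooth, the diffusion coefficient is non-degenerate and constant, and $\lambda^\delta$ is continuous and piecewise smooth, standard parabolic interior regularity (Schauder estimates applied on compact space-time cylinders, or H\"ormander hypoellipticity) upgrades the distributional solution to a classical one on $\mathbb R \times (0,\infty)$, which then satisfies \eqref{f0delta equation} pointwise.

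For the decay bound \eqref{f0delta estimation}, I would compare $f_0^\delta$ with the unkilled OU transition density starting from $0$. Since inserting a nonnegative killing term only removes mass, one has the pathwise coupling $\{X_t^\delta \in dx,\ T_1^\delta>t\}\subset \{Z_t^0\in dx\}$ with $Z_t^0$ as in \eqref{OU process}, and therefore
\[
0 \le f_0^\delta(x,t) \le p_t^{\text{OU}}(0,x) = \frac{1}{\sqrt{2\pi(1-e^{-2t})}}\exp\!\left(-\frac{x^2}{2(1-e^{-2t})}\right),
\]
which already gives the required Gaussian decay of $f_0^\delta$ itself for each fixed $t>0$. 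To extend the estimate to $\partial_x f_0^\delta$ and $\partial_{xx}f_0^\delta$, I would apply interior Schauder estimates on the unit cube $Q_{x,t} = (x-1,x+1)\times (t/2,t)$: for $|x|$ large, $\lambda^\delta$ equals the constant $\delta^{-1}$ on $Q_{x,t}$, so the coefficients of \eqref{f0delta equation} are smooth there, and the estimate $\|f_0^\delta\|_{C^{2,1}(Q_{x,t}')} \lesssim \|f_0^\delta\|_{L^\infty(Q_{x,t})}$ on a slightly smaller cube $Q_{x,t}'$ transfers the Gaussian decay in \(L^\infty\) to the first two spatial derivatives, after absorbing a harmless constant into the Gaussian exponent.

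The main obstacle is the first step, namely writing a clean It\^o formula for the killed process. One must justify that the $\mathbbm{1}_{\{T_1^\delta>s\}}$ factor can be moved inside the expectation of the generator, despite the state dependence of the jump intensity $\lambda^\delta(X_s^\delta)$. This is essentially the content of the semimartingale calculus in Chapter~VI of \cite{hanson2007applied} and mirrors the proof of Theorem~\ref{FK for jump diffusion}; the present killing-only setting is simpler because no delta source at $x=0$ arises. The regularity upgrade and the Gaussian tail bound are then routine, the only care being the handling of the region $x\in[1,1+\delta]$ where $\lambda^\delta$ is merely Lipschitz rather than smooth, which however does not affect the large-$|x|$ estimate \eqref{f0delta estimation}.
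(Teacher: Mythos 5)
Your proposal is correct, but it takes a genuinely different route from the paper. The paper proceeds ``PDE-first'': it invokes the Garroni--Menaldi theory to produce a Green's function $G$ for the parabolic operator with killing potential $\lambda^\delta$, cites Friedman for the $C^{2,1}$ regularity and Gaussian estimate \eqref{Gest}, and then identifies $G(0,0,\cdot,\cdot)$ with $f_0^\delta$ by an It\^o product calculation on $M_tN_t$ with $M_t=u(Z^0_t,t)$, $N_t=\exp(-\int_0^t\lambda^\delta(Z^0_s)\,ds)$, so that \eqref{f0delta estimation} falls out of the Green's function bound at no extra cost. You instead go ``probability-first'': derive the weak form of \eqref{f0delta equation} from It\^o's formula for the killed process, bootstrap to a classical solution via interior Schauder estimates, and establish the decay by domination $0\le f_0^\delta\le p_t^{\mathrm{OU}}(0,\cdot)$ together with local Schauder estimates on the translated cubes $Q_{x,t}$. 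This is more self-contained and avoids the Green's-function black box, at the price of some bookkeeping. Two points deserve a remark: first, the Schauder constant on $Q_{x,t}$ degrades as $|x|\to\infty$ because the drift $-y$ is unbounded, but this degradation is at most polynomial and is absorbed by the Gaussian factor, exactly as you anticipate; second, $\lambda^\delta$ is only Lipschitz (not $C^\infty$) near $x=1$ and $x=1+\delta$, so the ``H\"ormander hypoellipticity'' alternative you mention does not literally apply there, although the Schauder route goes through (Lipschitz is H\"older) and in any case those points are irrelevant for the large-$|x|$ estimate since $\lambda^\delta$ is locally constant on $Q_{x,t}$ once $|x|$ is large.
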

\begin{remark}
	The proof follows the standard argument as in \cite{delarue2013first}.
\end{remark}
\begin{proof}
    Recall in \eqref{OU process} that we use $Z^0_t$ to denote an O-U process starting from $0$. The proof of \eqref{f0delta estimation} follows the same argument as in Lemma $3.1$ of \cite{liu2020rigorous}. For any fixed $T>0$, according to Theorem $3.5$ in Chapter $\uppercase\expandafter{\romannumeral 5}$ of \cite{garroni1992green} by Garroni and Menaldi, there exists a unique Green's function 
	$G: \mathbb R\times[0,T]\times \mathbb R\times[0,T] \ni (y,s,x,t)\mapsto G(y,s,x,t)$ for the parabolic operator
	$$
	\mathcal L_y=-y\partial_y\cdot+\partial^2_{yy}\cdot-\lambda^\delta\cdot ,
	$$ 
	That is for a given $(x,t)\in \mathbb R\times[0,T]$, the function $\mathbb R\times[0,t)\ni (y,s)\mapsto G(y,s,x,t)$ is a solution of the PDE
	\begin{equation}
	\left\{
	\begin{aligned}
	&\partial_s G(y,s,x,t)+\mathcal L_y G(y,s,x,t)=0, \quad (y,s) \in \mathbb R\times[0,t),\\
	&G(y,t,x,t)= \delta(y-x)\  \mbox{in}\ \CD'(\mathbb R)
	\end{aligned}
	\right.
	\end{equation}	
	Following Theorem 5 in Chap. 9 of  \cite{friedman2008partial}, for any given $(y,s)\in \mathbb R\times [0,T)$, the function $\mathbb R\times(s,T] \ni (x,t)\mapsto G(y,s,x,t)$ is also known to be Green's function of the adjoint operator 
	$$
	\mathcal{L}_x^*=\partial_x[x\cdot]+\partial^2_{xx} \cdot-\lambda^\delta\cdot
	$$
	i.e. 
	\begin{equation}
	\label{first special equation delta}
	\left\{
	\begin{aligned}
	\partial_t G(y,s,x,t)&=\mathcal{L}_x^* G(y,s,x,t), \quad (x,t)\in \mathbb R\times(s,T],\\
	G(y,s,x,s)&= \delta(x-y)\  \mbox{in}\ \CD'(\mathbb R),
	\end{aligned}
	\right.
	\end{equation}
	Morever, it belongs to $C^{2,1}$ in $x,t$ and satisfies the following estimate:
	\begin{equation}
	\label{Gest}
	\left|\partial^\ell G(y,s,x,t)\right|\le C(t-s)^{-\frac{1+\ell}{2}}\exp\left(-C_0\frac{(x-y)^2}{t-s}\right),\quad 0\le s<t\le T.
	\end{equation}
	where $\ell=0,1,2$, $\partial^{\ell}=\partial_{x t}^{\ell}=\partial_{x}^{m} \partial_{t}^{n}, \,\ell=2 m+n,$ for $m,n \in \mathbb N_0$.
	
	Thus given any smooth test function $\phi: \mathbb R\times [0,T]$ with compact support, the PDE problem
	\begin{equation}
	\label{second special equation delta}
	\left\{
	\begin{aligned}
	\partial_s u(y,s) &= y\partial_y u(y,s)-\partial_{yy} u(y,s) +\lambda^\delta(y)u(y,s)-\phi(y,s), \quad (y,s)\in \mathbb R\times[0,T),\\
	u(y,T)&=0 \quad y\in\mathbb R
	\end{aligned}
	\right.
	\end{equation}
    admits a unique classical solution
	\beq
	\label{convolution}
	u(y,s)=\int_s^T\int_{-\infty}^{+\infty}G(y,s,x,t)\phi(x,t)dxdt.
	\eeq
    Set 
	$$
	M_t:=u( Z^0_t,t) \quad \text{and} \quad N_t:=\exp\{-\int_0^t\lambda^\delta(Z^0_s)ds\}
	$$ 
	--- they are both semimartingales. Then by It{\^o}'s formula (see Exercise  $5.32$ on Page $209$ of \cite{Liggett2010Continuous} for details), we have
	\beq
	\label{ito for YZ}
	d(M_t N_t)=M_tdN_t+N_tdM_t+d\langle M,N\rangle_t.
	\eeq
	Note that $dN_t=-N_t\lambda^\delta(Z^0_t)dt$, thus $N_t$ is of bounded variation and then the quadratic variation $\langle N\rangle_t=0$. By $\langle M,N\rangle_t\le \langle M\rangle_t\langle N\rangle_t$, we know the covariance process $\langle M,N\rangle_t$ is equal to $0$. Hence
	\bae
	\label{ito calculus}
	d(M_tN_t)=&-u(Z^0_t,t)\exp\{-\int_0^t\lambda^\delta(Z^0_s)ds\}\lambda^\delta( Z^0_t)dt + \exp\{-\int_0^t\lambda^\delta(Z^0_s)ds\}\\
	&\cdot\left(\left[u_t(Z^0_t,t)-u_x(Z^0_t,t)Z^0_t+u_{xx}( Z^0_t,t)\right]dt+\sqrt{2}u_x(Z^0_t,t)dB_t\right).
	\eae
	With \eqref{ito calculus} and the boundary condition of $u$ in \eqref{second special 
	equation delta}, we have 
	\bae
	\label{Ito calculus}
	0&=u( Z^0_T, T)\exp(-\int_0^T\lambda^\delta(Z^0_t)dt)\\
	&=u(0,0)-\int_0^Tu(Z^0_t, t)\lambda^\delta( Z^0_t)\exp(-\int_0^t\lambda^\delta(Z^0_s)ds)dt\\
	&+\int_0^T\left[u_t(Z^0_t, t)-xu_x(Z^0_t, t)+u_{xx}(Z^0_t, t)\right]\exp(-\int_0^t\lambda^\delta(Z^0_s)ds)dt\\
	&+\sqrt{2}\int_0^Tu_x(Z^0_t, t)\exp(-\int_0^t\lambda^\delta(Z^0_s)ds)dB_t.
	\eae
	Taking the expectation of \eqref{Ito calculus} and recalling \eqref{second special equation 
	delta}, we have 
	\bae
	u(0,0)&=\e^0\left[\int_0^T\phi(Z^0_t, t)\exp(-\int_0^t\lambda^\delta(Z^0_s)ds)dt\right]\\
	&=\int_0^T\e^0\left[\phi(Z^0_t, t)\exp(-\int_0^t\lambda^\delta(Z^0_s)ds)\right]dt
	\eae
	Now applying formula $(8.2.10)$ on Page $139$ of \cite{Oksendal1998Stochastic} with $f(\cdot)=\phi(\cdot, t)$, 
	\beq
	\e^0\left[\phi(Z^0_t, t)\exp(-\int_0^t\lambda^\delta(Z^0_s)ds)\right]=\e^0\left[\phi(X^\delta_t, t)\mathbbm{1}_{\{T_1^\delta>t\}}\right]
	=\int_{-\infty}^{+\infty}\phi(x,t)f_0^\delta(x,t)dx.
	\eeq
	By \eqref{convolution} we have 
	$$
	\int_0^T\int_{-\infty}^{+\infty}\phi(x,t)G(0,0,x,t)dxdt=u(0,0)=\int_0^T\int_{-\infty}^{+\infty}\phi(x,t)f_0^\delta(x,t) dxdt,
	$$
	which implies that $G(0,0,x,t)$ coincides with $f_0^\delta(x,t)$. Thus we conclude that $f_0^\delta(x,t)$ satisfies \eqref{f0delta equation} and the decay property \eqref{f0delta estimation} is valid because of \eqref{Gest}. 
	
\end{proof}
Next we can prove 
\begin{lemma}
	For any $n\ge 1$ and $t>0$,
	\beq
	\label{stopping time relationship}
	f_{T_n^\delta}(t)=N^\delta_{n-1}(t).
	\eeq
\end{lemma}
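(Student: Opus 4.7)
The plan is to prove this by induction on $n$, using the Fokker-Planck equation \eqref{f0delta equation} for the base case and the strong Markov property (with the reset to $0$ at each jump) for the induction step.

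\textbf{Base case $n=1$.} The idea is that $\mathbb{P}^0(T_1^\delta>t)=\int_{\mathbb{R}}f_0^\delta(y,t)\,dy$, since $f_0^\delta(y,t)\,dy=\mathbb{P}^0(X_t^\delta\in dy,\,n_t^\delta=0)=\mathbb{P}^0(X_t^\delta\in dy,\,T_1^\delta>t)$. Differentiating in $t$ and inserting the equation \eqref{f0delta equation} gives
\begin{equation*}
f_{T_1^\delta}(t)=-\frac{d}{dt}\int_{\mathbb{R}}f_0^\delta(y,t)\,dy=-\int_{\mathbb{R}}\left[\frac{\partial}{\partial y}(yf_0^\delta)+\frac{\partial^2 f_0^\delta}{\partial y^2}-\lambda^\delta(y)f_0^\delta\right]dy.
\end{equation*}
The first two terms vanish by integration by parts because of the Gaussian-type decay estimate \eqref{f0delta estimation}, leaving exactly $f_{T_1^\delta}(t)=\int_{\mathbb{R}}\lambda^\delta(y)f_0^\delta(y,t)\,dy=N_0^\delta(t)$. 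Justifying the differentiation under the integral sign and the vanishing of the boundary terms is routine given \eqref{f0delta estimation}.

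\textbf{Inductive step.} Here I would exploit the fact that the process restarts from $0$ at each jump time: by the strong Markov property at $T_{n-1}^\delta$, conditional on $\{T_{n-1}^\delta=s\}$ the shifted process $\{X_{s+u}^\delta\}_{u\ge 0}$ has the same law as $\{X_u^\delta\}_{u\ge 0}$ started from $0$, and under this law $T_n^\delta-T_{n-1}^\delta$ is distributed as $T_1^\delta$ and is independent of $T_{n-1}^\delta$. This yields the two convolution identities
\begin{equation*}
f_{T_n^\delta}(t)=\int_0^t f_{T_{n-1}^\delta}(s)\,f_{T_1^\delta}(t-s)\,ds,\qquad f^\delta_{n-1}(x,t)=\int_0^t f_{T_{n-1}^\delta}(s)\,f_0^\delta(x,t-s)\,ds,
\end{equation*}
where the second identity comes from decomposing $\{n_t^\delta=n-1\}$ according to the value of $T_{n-1}^\delta$ and recognizing that no further jump occurs in $(s,t]$ precisely when the post-jump copy stays in its killed regime for time $t-s$.

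\textbf{Conclusion.} Multiplying the expression for $f_{n-1}^\delta(x,t)$ by $\lambda^\delta(x)$ and integrating against $x$, Fubini gives $N_{n-1}^\delta(t)=\int_0^t f_{T_{n-1}^\delta}(s)\,N_0^\delta(t-s)\,ds$. Substituting the base case $N_0^\delta(t-s)=f_{T_1^\delta}(t-s)$ and comparing with the convolution formula for $f_{T_n^\delta}(t)$ yields $f_{T_n^\delta}(t)=N_{n-1}^\delta(t)$, closing the induction.

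\textbf{Main obstacle.} The only delicate point is the rigorous use of the strong Markov property to justify the two renewal identities: one must argue carefully that conditionally on $T_{n-1}^\delta$ the post-jump trajectory is an independent copy of $X^\delta_\cdot$ started at $0$, exploiting the fact that the jump map $x\mapsto 0$ is deterministic. Once this is in hand, Fubini and the base case finish the proof cleanly.
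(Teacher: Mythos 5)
Your proof is correct and follows the same basic outline as the paper's (Fokker--Planck equation plus decay estimate for the base case, renewal convolution plus Fubini for higher $n$), but the treatment of the step $n\ge 2$ is organized differently in a way worth noting. The paper runs a genuine induction: it starts from the second identity in Proposition \ref{iteration}, $f_{T_{n+1}^\delta}(t)=\int_0^t f_{T_n^\delta}(t-s)\,f_{T_1^\delta}(s)\,ds$, substitutes the inductive hypothesis $f_{T_n^\delta}=N_{n-1}^\delta=\int\lambda^\delta f_{n-1}^\delta\,dx$, and then applies Fubini together with the \emph{first}-jump decomposition $f_n^\delta(x,t)=\int_0^t f_{n-1}^\delta(x,t-s)\,f_{T_1^\delta}(s)\,ds$ to reassemble $N_n^\delta$. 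You instead use the \emph{last}-jump decomposition $f_{n-1}^\delta(x,t)=\int_0^t f_{T_{n-1}^\delta}(s)\,f_0^\delta(x,t-s)\,ds$ (conditioning on $T_{n-1}^\delta$ via the strong Markov property and the deterministic reset to $0$), which reduces $N_{n-1}^\delta$ to a convolution of $f_{T_{n-1}^\delta}$ with $N_0^\delta$; after invoking only the base case $N_0^\delta=f_{T_1^\delta}$, this coincides with $f_{T_n^\delta}$. The upshot is that your version is not really an induction at all beyond $n=1$, which is a small simplification. One thing to make explicit if you write it up: your last-jump identity is not literally one of the two identities in Proposition \ref{iteration}, though it follows either from iterating the first one (using that $f_{T_{n-1}^\delta}$ is the $(n-1)$-fold convolution of $f_{T_1^\delta}$) or, as you suggest, directly from the strong Markov property at $T_{n-1}^\delta$; you should cite one of these justifications rather than treating it as self-evident.
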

\begin{proof}
We prove \eqref{stopping time relationship} inductively. First for the case when $n=1$, with \eqref{f0delta equation} and \eqref{f0delta estimation}, one has for any $t>0$,
\begin{equation}
\begin{aligned}
\label{stopping time relationship 1}
f_{T^\delta_1}(t)=&\frac{d}{dt}\prob^0(T_1^\delta\le t)=-\frac{d}{dt}\int_{-\infty}^{+\infty}f_0^\delta(x,t)dx\\
=&-\int_{-\infty}^{+\infty}\frac{d}{dt}f_0^\delta(x,t)dx\\
=&\int_{-\infty}^{+\infty} \left[\lambda^\delta(x)f^\delta_0(x,t)-\frac{\partial ^2 f^\delta_0}{\partial x^2}-\frac{\partial}{\partial x}\left( x f^\delta_0 \right)\right]dx\\
=&\int_{-\infty}^{+\infty} \lambda^\delta(x)f^\delta_0(x,t)dx.
\end{aligned}
\end{equation}
Now we assume that \eqref{stopping time relationship} holds for all $k\le n$ and note that 
$$
f_{T^\delta_{n+1}}(t)=\int_0^tf_{T_n^\delta}(t-s)f_{T_1^\delta}(s)ds
=\int_0^t\int_{-\infty}^{+\infty}f^\delta_{n-1}(x,t-s)\lambda^\delta(x)dxf_{T_1^\delta}(s)ds.
$$
By Fubini's formula,
\beq
f_{T^\delta_{n+1}}(t)=\int_{-\infty}^{+\infty}\int_0^tf^\delta_{n-1}(x,t-s)f_{T_1^\delta}(s)ds\lambda^\delta(x)dx=\int_{-\infty}^{+\infty}f_n^\delta(x,t)\lambda^\delta(x)dx=N_n(t).
\eeq

\end{proof}

Now we can show the weak convergence of $N^\delta$ as $\delta \rightarrow 0^+$. More precisely,  given $T>0$, for any smooth test function $\phi(t)\in C_b[0,T]$, we have
\beq
\label{conv jump}
\left|\int_0^T\phi(t)N^\delta(t)dt-\int_0^T\phi(t)N(t)dt \right| \rightarrow 0, \quad \text{as} \quad \delta \rightarrow 0^+.
\eeq
Notice that both $N(t)$ and $N^\delta(t)$ have series representations.
In light of the following decomposition
\begin{equation*}
\begin{aligned}
&\left|\int_0^T\phi(t)N^\delta(t)dt-\int_0^T\phi(t)N(t)dt \right|\\
\le &\sum_{i=1}^{N}\left|\int_0^T\phi(t)F'_{T^{\delta}_n}(t)dt-\int_0^T\phi(t)F'_{T_n}(t)dt \right|+\sum_{i=N+1}^{+\infty}\left|\int_0^T\phi(t)F'_{T^{\delta}_n}(t)dt-\int_0^T\phi(t)F'_{T_n}(t)dt \right|\\
=&:I_1+I_2.
\end{aligned}
\end{equation*}
Due to the exponential decay property \eqref{exp}, $I_2\to 0$ as $\delta \to 0^+$ and thus it suffices to estimate $I_1$, noting that $\phi$ is bounded, 
$$
I_1=\sum_{i=1}^{N} \left|\int_0^T\phi(t)dF_{T^{\delta}_n}(t)-\int_0^T\phi(t)dF_{T_n}(t) \right|
$$
Noting that $T_1^\delta \to T_1$ in distribution and $T_n, T_n^\delta$ are the i.i.d. summation of $T_1, T_1^\delta$ respectively, we have  $T_n^\delta \to T_n$ in distribution as $\delta \to 0^+$ and thus for any $n$,
$$
\lim_{\delta\to 0^+} \left|\int_0^T\phi(t)dF_{T^{\delta}_n}(t)-\int_0^T\phi(t)dF_{T_n}(t) \right|=0.
$$
Noting that the summation in $I_1$ is finite, we conclude that $I_1\to 0$ as $\delta \to 0^+$. Hence, the proof for the weak convergence of $N^\delta$ is complete.

\subsection{Convergence rate}\label{convergence rate}
\

\noindent

Finally, we aim to rigorously prove a polynomial-order convergence rate of $F^\delta(x,t) \to F(x,t)$ as $\delta \to 0^+$ as in \eqref{conv rate}. For any sufficiently small $\epsilon>0$, let $\epsilon_0=\epsilon^2$ and $n_0=\frac{1}{C_T}\log(\epsilon_0^{-1})$. By lemma \ref{F0 dif delta} and Lemma \ref{Fn dif delta}, for all $\epsilon_0>0$, $\exists$ $\eta_0 = \eta_0(\epsilon_0)>0$ s.t. $\forall \delta \le \eta_0$, $n\ge 0$ and $(x,t) \in \mathbb{R}\times [0,T]$,
$$
\left| F_n^\delta(x,t) - F_n(x,t) \right| \le (n+2)^2\epsilon_0.
$$   
Moreover, recalling \eqref{exp}, for any fixed $T<\infty$, there $\exists$ $C_T>0$ s.t. 
$$
\prob^0(T^{\delta}_n\le t)\le \prob^0(T_n\le t)\le \exp(-C_Tn).
$$
Then for all $\delta < \eta_0(\epsilon_0)$, $t \in [0,T]$ and $x \in \mathbb{R}$, 
\bae
&|F^\delta(x,t) - F(x,t)|\\
\le& \sum_{n=0}^{n_0-1} |F_n^\delta(x,t) - F_n(x,t)| + 2 \prob^0(T_{n_0}\le t)\\
\le& (n_0+2)^3 \epsilon_0 + 2\epsilon_0< \epsilon.
\eae

Thus to get the convergence rate of a polynomial-order, we only need to find a lower bound for $\eta_0$, which is polynomial with respect to $\epsilon$. In the order to prove the result of interest, it suffices to show the following polynomial-order relationship:
\begin{itemize}
	\item[(\romannumeral1)] The $\eta_1$ and $\eta_2$ defined in Proposition \ref{FT1} and Proposition \ref{F0 continuity} are both of a polynomial-order of $\epsilon$.
	\item[(\romannumeral2)] The $\eta_0$ in Lemma \ref{F0 dif delta} is of a polynomial order of $\eta_1$ and $\eta_2$, and thus also a polynomial-order of $\epsilon$.
\end{itemize}

\begin{proof}
	[Proof of (\romannumeral1):] For $\eta_1$, we can always set $\eta_1 = \frac{\epsilon}{\max_{t \le T} f_{T_1}(t)+1}$ by Proposition \ref{FT1}. As for $\eta_2$, when $\epsilon<1$ is sufficiently small, one may let $\eta_2=\eta_1^3= \frac{\epsilon^3}{[\max_{t \le T} f_{T_1}(t)+1]^3}$ and prove that it satisfies the condition in Proposition \ref{F0 continuity}. For any $\eta_1<t'<t\le T$, $t-t'\le \eta_2$, we have that for any $x\in \mathbb{R}$,
	\beq
	\left|F_0(x,t) - F_0(x,t')\right| = \left|\int_{-\infty}^x \left[f_0(y,t)-f_0(y,t')\right]dy \right| \le \int_{-\infty}^x \int_{t'}^t \left| \frac{df_0}{ds}(y,s) \right| dsdy. 
	\eeq
	By the estimation of the Green's function (38) of \cite{liu2020rigorous}, $\exists$ $C, C_0<\infty$ that depend only on $T$ s.t. for any $s\in [t',t]$,
	$$
	\left| \frac{df_0}{dt}(y,s) \right| \le C s^{-1} \exp\left( -C_0\frac{y^2}{s} \right) \le C \eta_1^{-1} \exp \left( -C_0\frac{y^2}{T} \right).
	$$
	Thus 
	\beq
	|F_0(x,t) - F_0(x,t')| \le \eta_1^2 \cdot C \int_{-\infty}^1 \exp \left( -C_0\frac{y^2}{T} \right) dy <\epsilon.
	\eeq
\end{proof}

For (\romannumeral2), recalling the proof of Lemma \ref{F0 dif delta}, the choice of $\eta_0$ is decided by \eqref{T10}, thus with the following lemma we can find a polynomial order of $\eta_1$ as a lower bound for $\eta_0$.
\begin{proposition}
	\label{polynomial decay}
	For all sufficiently small $\epsilon>0$, $\exists$ $\gamma\in (0,\infty)$ such that $\forall \delta<\epsilon^\gamma$, we have 
	\beq
	\label{poly rate}
	\prob^1(\hat{T}_1^{0,\delta} > \epsilon) <\epsilon.
	\eeq
\end{proposition}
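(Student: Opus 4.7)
The plan rests on the chain of inclusions derived in \eqref{calculations} and the resulting bound \eqref{compare} from the proof of Lemma \ref{F0 dif delta}, which give
\begin{equation*}
\prob^1(\hat{T}_1^{0,\delta} > \epsilon) \le \prob^1\!\left(\frac{\mathfrak{L}(\hat{I}(\epsilon,\delta))}{\delta} < \Gamma\right),
\end{equation*}
where $\Gamma \sim \exp(1)$ is independent of the OU process $\hat{Z}$ and $\hat{I}(\epsilon,\delta) := \{s \in [0,\epsilon] : \hat{Z}_s > 1+\delta\}$. Setting $K_\epsilon := \log(2/\epsilon)$ gives $\prob(\Gamma > K_\epsilon) = \epsilon/2$, so it suffices to produce $\gamma>0$ independent of $\epsilon$ such that
\begin{equation*}
\prob^1\bigl(\mathfrak{L}(\hat{I}(\epsilon,\delta)) < \delta K_\epsilon\bigr) \le \epsilon/2 \quad \text{for all } \delta \le \epsilon^\gamma.
\end{equation*}

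To estimate this occupation-time lower tail, I would couple $\hat{Z}$ with a standard Brownian motion on the short interval $[0,\epsilon]$ via the decomposition
\begin{equation*}
\hat{Z}_s - 1 = \sqrt{2}B_s - s + R_s, \qquad R_s = (e^{-s}-1+s) + \sqrt{2}\int_0^s \bigl(e^{-(s-u)}-1\bigr)\,dB_u.
\end{equation*}
Rewriting the stochastic integral through the martingale $N_u := \int_0^u e^v dB_v$ (so that the $dB$-part of $R_s$ becomes $-\int_0^s e^{-u} N_u du$ by It\^o's product rule) and applying Doob's maximal inequality give $\sup_{s \le \epsilon}|R_s| \le C\epsilon$ off an event of probability at most $\epsilon/4$. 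On the complement, $\hat{I}(\epsilon,\delta)$ contains the Brownian occupation set $\{s \le \epsilon : \sqrt{2}B_s > \delta + s + C\epsilon\}$, so the problem reduces to a lower-tail estimate for $A_\epsilon^\tau := \mathfrak{L}\{s \in [0,\epsilon] : B_s > \tau\}$ at a threshold $\tau = O(\epsilon)$.

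The key probabilistic input is L\'evy's arcsine law: $A_\epsilon^0/\epsilon$ has density $\pi^{-1}[x(1-x)]^{-1/2}$ on $[0,1]$, so $\prob(A_\epsilon^0 < a) \le C\sqrt{a/\epsilon}$ for small $a/\epsilon$. Lifting this to positive threshold $\tau$ via the strong Markov property at the first-passage time $\sigma_{2\tau} := \inf\{s : B_s = 2\tau\}$ yields
\begin{equation*}
\prob(A_\epsilon^\tau < a) \le C\sqrt{a/\epsilon} + \prob(\sigma_{2\tau} > \epsilon/2).
\end{equation*}
Substituting $a = \delta K_\epsilon$ and choosing $\gamma$ large enough (e.g.\ $\gamma>3$) makes the first term on the right $o(\epsilon)$.

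The principal obstacle is that the reflection-principle bound $\prob(\sigma_{2\tau} > \epsilon/2) = 2\Phi(2\tau/\sqrt{\epsilon/2}) - 1 = O(\tau/\sqrt{\epsilon})$ is only $O(\sqrt{\epsilon})$ for $\tau = O(\epsilon)$ --- not small enough to fit in an $\epsilon/2$ budget. Closing this $\sqrt{\epsilon}$-to-$\epsilon$ gap is exactly the multiscale renormalization step advertised in the paper outline: I would partition $[0,\epsilon]$ into $N_0 \sim \epsilon^{-1/2}$ sub-intervals of length $\sim \epsilon^{3/2}$, exploit the independence of the Brownian increments across sub-intervals to amplify the per-sub-interval first-passage event into a compounded success whose failure probability is super-polynomially small, and then apply a rescaled arcsine estimate on the surviving sub-interval starting from a point already above $2\tau$. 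Combining this refinement with the choice $\delta \le \epsilon^\gamma$ for $\gamma$ sufficiently large then yields \eqref{poly rate}.
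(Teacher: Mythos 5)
Your reduction via the independence of $\Gamma$, the choice $K_\epsilon = \log(2/\epsilon)$, and the decomposition $\hat{Z}_s - 1 = \sqrt{2}B_s - s + R_s$ is sound, and you correctly pinpoint the bottleneck as bounding $\prob\bigl(A_\epsilon^\tau < \delta K_\epsilon\bigr)$ for a fixed threshold $\tau = O(\epsilon)$. But the reduction to a \emph{fixed} level of order $\epsilon$ over the \emph{whole} time window $[0,\epsilon]$ is already too lossy to give an $\epsilon$-sized bound, not merely inconveniently slow. The estimate $\prob(\sigma_{2\tau} > \epsilon/2) = \Theta(\tau/\sqrt{\epsilon})$ is tight (reflection principle plus the value of the Gaussian density at the origin), so $\prob(A_\epsilon^\tau = 0) = \prob(\max_{s\le\epsilon}B_s < \tau) = \Theta(\sqrt{\epsilon})$. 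Since $\prob(A_\epsilon^\tau < a) \ge \prob(A_\epsilon^\tau = 0)$ for \emph{every} $a > 0$, the target $\prob(A_\epsilon^\tau < a) \le \epsilon/4$ fails for small $\epsilon$ regardless of how $a$ (hence $\gamma$) is tuned. The time-subdivision amplification you sketch cannot repair this: the per-block first-passage events are far from independent, because once $B$ wanders to $-c\sqrt{\epsilon}$ (a typical excursion) the chance of recrossing $2\tau$ in one further block of length $\epsilon^{3/2}$ is of order $\exp(-c'\epsilon^{-1/2})$, so failure on an early block makes failure on all later blocks overwhelmingly likely. The $\sqrt{\epsilon}$ is the genuine probability of the event that $B$ never reaches level $O(\epsilon)$; subdividing time does not shrink it, and ``super-polynomially small compounded failure'' does not hold.

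The paper's renormalization is organized over \emph{spatial} scales rather than time blocks precisely to escape this obstruction. It introduces the exit times $\tau_n = \inf\{t : |Z^1_t - 1| = \epsilon_n\}$ with $\epsilon_n = \epsilon\, 2^{-n}$, so that $\tau_{n_1} < \cdots < \tau_1 < \tau_0 \le \epsilon$ with overwhelming probability, and at scale $n$ asks for an occupation of length $\epsilon_n^2$ above the level $1 + \epsilon_n$ during $[\tau_n, \tau_{n-1}]$. Brownian scale invariance at space scale $\epsilon_n$ / time scale $\epsilon_n^2$ (with the drift of the OU process a negligible perturbation there) gives a probability at least some fixed $p > 0$ for this excursion event, uniformly in $n$ and, crucially, uniformly over whether the process exits the $\epsilon_n$-box from the top or the bottom. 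Since the $\tau_n$ are stopping times, the strong Markov property yields $\prob\bigl(\cap_{n\le n_1} A_n^c\bigr) \le (1-p)^{n_1}$, which is $O(\epsilon)$ once $n_1 \sim \log(1/\epsilon)$; the regularization parameter then only needs $\delta < \epsilon_{n_1}^2$, which is polynomial in $\epsilon$. The feature a fixed time grid cannot reproduce is that the stopping times track the process: when the path heads downward the next $\tau_n$ arrives quickly, the scale resets, and a fresh $O(1)$-probability chance appears, whereas your grid leaves the Brownian path stranded below a fixed level $2\tau$ with no mechanism to recover. To salvage your line of attack you would have to replace the fixed temporal partition with stopping times adapted to the spatial scale of the excursion, which in effect reconstructs the paper's argument.
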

In order to prove \eqref{poly rate}, recalling \eqref{calculations} we have  
$$
\left\{\hat{T}_1^{0,\delta} \le \epsilon \right\} \supset \left\{\frac{\mathfrak{L}\left(\hat{I}(\epsilon, \delta)\right)}{\delta} \ge \Gamma\right\}
$$
Thus it suffices to show that for any $\delta< \epsilon^\gamma$
\bae
\label{speed aim}
\prob^1\left( \frac{\mathfrak{L}\left(\hat{I}(\epsilon, \delta)\right)}{\delta} \ge \Gamma \right) = \prob^1\left( \frac{\int_0^{\epsilon} \mathbbm{1}_{ \{Z^1_t > 1+\delta \} }dt}{\delta} \ge \Gamma \right)\ge 1-\epsilon,
\eae
where $Z^1_t$ denotes a standard O-U process starting from $1$ as in \eqref{OU process} where $\Gamma$ is an independent exponential distribution obeying $\exp(1)$.

Now we use a renormalization argument which is standard for Brownian Motion (abbreviated by B.M.) to prove \eqref{speed aim}. We first introduce a sequence of scales as follows: define $\epsilon_n =\epsilon \cdot 2^{-n}$ for all $n\ge 0$ and a decreasing sequence of stopping times, 
\beq
\label{tau n}
\tau_n=\inf\{ t\ge 0, \ |Z^1_t-1|= \epsilon_n \}.
\eeq
We hereby outline our argument as follows:
\begin{enumerate}
	\item We first introduce a sequence of geometrically shrinking ``boxes", all centered at 1, where the size of box $n$ equals $\epsilon_n$, which is half that of its predecessor. 
	\item Note that by $t=\epsilon$, an O-U process starting from $1$ on average will wander a distance at least $O(\epsilon^{0.5})\gg \epsilon$ away from 1. So with high probability, the O-U process has already escaped the largest box by time $\epsilon$.
	\item By scaling invariance of B.M., i.e. for any $a>0$, $\frac{1}{\sqrt a} B_{at} \overset{d}{=} B_t$, we can prove that for all $n\ge 0$ with at least a uniformly positive probability $p>0$, an O-U process stays at the right of 1 for some positive fraction of time between $\tau_n$ and $\tau_{n-1}$ to trigger the Poisson jump under the intensity \eqref{continuerate}, and we say the O-U process ``succeeds" in the $n$th step when such an event happens.
	\item We can choose an appropriate constant $\gamma$ independent of $\epsilon$ whose exact value can be found in \eqref{poly gamma}, and $n_1=O(\log \epsilon^{-1})$ s.t. $(1-p)^{n_1}<\frac{\epsilon}{3}$. Then when $\delta< \epsilon^{4\gamma}$ with high probability, any success in step $n\le n_0$ can trigger our Poisson jump.
	\item In order not to trigger the Poisson jump, the O-U process must not jump in all of the first $n_1$ steps. Thus by the strong Markov property, we know that the probability of not jumping is no larger than the product of these uniform upper bounds $(1-p)^{n_1}<\epsilon/3$. 
\end{enumerate}

Now returning to the detailed proof, we firstly show that with high probability $\tau_0 \le \epsilon$.
\begin{lemma}
	For $\tau_0$ in \eqref{tau n}, $\exists \alpha>0$ s.t. $\prob^1(\tau_0>\epsilon) \le \exp(-\epsilon^\alpha)$ for all sufficiently small $\epsilon >0$.
\end{lemma}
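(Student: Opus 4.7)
The plan is to exploit the fact that on the short time scale $[0,\epsilon]$, $Z_t^1-1$ is well approximated by $\sqrt 2\, B_t$, and that a standard Brownian motion exits an interval of width $\epsilon$ on the much smaller time scale $\epsilon^2$. This reduces the estimate to a classical small-ball estimate for Brownian motion, which decays exponentially fast.

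Concretely, I would start from the explicit representation \eqref{OU process} with starting point $1$:
\[
Z_t^1-1 \;=\; (e^{-t}-1) \;+\; \sqrt 2\int_0^t e^{-(t-s)}\,dB_s.
\]
An It\^o integration-by-parts applied to $e^{-(t-s)}B_s$ (as a function of $s$ on $[0,t]$) gives
\[
\sqrt 2\int_0^t e^{-(t-s)}\,dB_s \;=\; \sqrt 2\,B_t \;-\; \sqrt 2\int_0^t e^{-(t-s)}B_s\,ds,
\]
so that $Z_t^1-1=\sqrt 2\,B_t+R_t$ with the deterministic bound $|R_t|\le t+ t\sup_{s\le t}|B_s|$. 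This is the crucial comparison step; everything else reduces to Brownian motion.

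On the event $\{\tau_0>\epsilon\}=\{\sup_{t\le\epsilon}|Z_t^1-1|<\epsilon\}$, the above representation forces $\sqrt 2\sup_{t\le\epsilon}|B_t|\le \epsilon+\epsilon+\epsilon\sup_{t\le\epsilon}|B_t|$, which for $\epsilon$ small enough implies $\sup_{t\le\epsilon}|B_t|\le C\epsilon$ for some absolute constant $C$. Hence
\[
\prob^1(\tau_0>\epsilon)\;\le\;\prob\Bigl(\sup_{t\le\epsilon}|B_t|\le C\epsilon\Bigr)\;=\;\prob\Bigl(\sup_{s\le 1}|B_s|\le C\sqrt{\epsilon}\Bigr),
\]
by Brownian scaling $B_{\epsilon s}\stackrel{d}{=}\sqrt\epsilon\,B_s$. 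Applying the classical small-ball estimate (coming from the Dirichlet first eigenvalue on $(-a,a)$)
\[
\prob\Bigl(\sup_{s\le 1}|B_s|\le a\Bigr)\;\le\;C_0 \exp\!\left(-\tfrac{\pi^2}{8a^2}\right),\qquad a\to 0^+,
\]
with $a=C\sqrt\epsilon$ produces a bound of order $\exp(-c/\epsilon)$, which is far stronger than $\exp(-\epsilon^\alpha)$ for any $\alpha\in(0,1)$.

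I expect the only real subtlety is the self-referential appearance of $\sup_{s\le t}|B_s|$ inside the remainder term $R_t$; however, since its coefficient is $t\le\epsilon$ and thus vanishes, the bound can be absorbed into the left-hand side with room to spare. A minor bookkeeping point is the choice of $\alpha$: since the argument actually yields $\prob^1(\tau_0>\epsilon)\le C\exp(-c/\epsilon)$, the stated polynomial power $\alpha$ can be taken arbitrarily (for the iterative renormalization scheme, any $\alpha$ allowing $n_1=O(\log\epsilon^{-1})$ steps to succeed is enough, which this bound clearly provides).
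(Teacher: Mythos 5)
Your argument is correct and takes a genuinely different route from the paper. Both proofs share the initial observation that on $\{\tau_0>\epsilon\}$ the driving Brownian motion must stay in a window of width $O(\epsilon)$ up to time $\epsilon$, but you arrive at it via the explicit representation and It\^o integration by parts, while the paper manipulates the integral form $Z_t^1 = 1 - \int_0^t Z_s^1\,ds + \sqrt 2\,B_t$ directly. The real divergence is in what happens next: you invoke the classical one-shot small-ball estimate $\prob(\sup_{s\le 1}|B_s|\le a)\le C_0\exp(-\pi^2/(8a^2))$, which after scaling yields the sharp bound $\exp(-c/\epsilon)$. The paper instead coarsens to the threshold $\epsilon^{2/3}$ (so that the hitting time $\bar\tau_\epsilon$ of $\pm\epsilon^{2/3}$ dominates $\tau_0$), Brownian-rescales to unit scale, and runs a renewal iteration over $\lfloor\epsilon^{-1/3}\rfloor$ unit time blocks, each giving a uniform escape probability; this produces the weaker $\exp(-\epsilon^{-1/4})$. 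Your approach is cleaner and gives a quantitatively stronger conclusion; the paper's renewal argument buys only elementary ingredients (no eigenvalue expansion) at the cost of a looser rate, which they do not need anyway since Proposition~\ref{polynomial decay} only requires this error term to be $o(\epsilon)$. One editorial point worth making explicit: the lemma as stated reads $\exp(-\epsilon^\alpha)$, which tends to $1$ as $\epsilon\to 0^+$ and would be vacuous; both the paper's proof and your argument (and the subsequent use in the proof of Proposition~\ref{polynomial decay}, where the bound $\exp(-\epsilon^{-1/4})$ appears) confirm that the intended statement is $\exp(-\epsilon^{-\alpha})$.
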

\begin{proof}
	Recall that 
	$$
	Z^1_t=1-\int_0^tZ^1_sds +B_t.
	$$
	Let $\bar{\tau}_\epsilon = \inf \{ t>0, \ |B_t|=\epsilon^{\frac{2}{3}} \}$. Noting that $\{\tau_0>\epsilon\} \subset \{\bar{\tau}_\epsilon > \epsilon\} $ and by the scaling invariance and the Markov property of B.M. $B_t$, 
	\bae
	\prob^1(\tau_0>\epsilon) \le& \prob^0(\bar{\tau}_\epsilon > \epsilon)\\ 
	=& \prob^0(\bar{\tau}_1 > \epsilon^{-\frac13}) \\
	\le& (1- \min_{z\in[-1,1]} \prob^z(|B_1|>1))^{ \lfloor  \epsilon^{-\frac13}\rfloor} \\ 
	\le& (1- \prob^0(|B_1|>2))^{ \lfloor  \epsilon^{-\frac13}\rfloor} \\
	\le& \exp \left( -\epsilon^{-\frac14} \right)
	\eae
	for all sufficiently small $\epsilon$, where  $\lfloor \cdot \rfloor$ denotes the integer part. Thus let $\alpha=\frac14$ and the proof is complete.
\end{proof}
Now for any integer $n\ge 1$, we say step $n$ is a ``success" if the event 
\beq
A_n := \left\{\int_{\tau_n}^{\tau_{n-1}} \mathbbm{1}_{ \{Z^1_t>1+\epsilon_n\} }dt>\epsilon_n^2 \right\}
\eeq
happens. To find a lower bound for the probabilities of success in each step, we first consider the following technical lemma for B.M. at scale of order $1$:
\begin{lemma}
	For $B_t$ the standard B.M., let 
	\beq
	\Gamma_x = \inf\{ t>0:\ B_t=x \}.
	\eeq
	Then 
	\beq
	\label{key bound}
	\prob\left( \Gamma_4<\Gamma_{-\frac12}, \Gamma_4<4, \int_0^{\Gamma_{\frac23}} \mathbbm{1}_{ \{B_t \in [\frac14, \frac12]\} } dt >1, 
	\int_0^{\Gamma_{2.8}} \mathbbm{1}_{ \{B_t \in [2.2, 2.5]\} } dt >1 \right) =: p >0.
	\eeq
\end{lemma}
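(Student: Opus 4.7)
The plan is to realize the event as containing an $\varepsilon$-tube around a deterministic continuous path and then invoke the support theorem for Brownian motion. Concretely, I would first construct a piecewise linear curve $\gamma\colon[0,T]\to\BR$ with $T=3$ and $\gamma(0)=0$ that strictly satisfies a strengthened version of each of the four conditions: fixing $\eta\in(0,1/4)$ small, let $\gamma$ rise linearly from $0$ to $3/8$, stay constant at $3/8$ for a duration $1+\eta$, rise linearly through $2/3$ up to $2.35$, stay constant at $2.35$ for another duration $1+\eta$, and finally rise linearly through $2.8$ up to $4$, with $\gamma(t)\ge 0$ throughout. Arranging the five transit segments to have total length below $1-2\eta$, the whole curve consumes total time $\le 3<4$; the two dwells sit strictly interior to the open intervals $(1/4,1/2)$ and $(2.2,2.5)$, and take place strictly before $\gamma$ approaches the respective targets $2/3$ and $2.8$.

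Next I would choose $\varepsilon>0$ small enough, say $\varepsilon<\min(\eta/4,1/20)$, so that every path $w\in C_0[0,T]$ with $\sup_{t\in[0,T]}|w(t)-\gamma(t)|<\varepsilon$ lies in all four events of the lemma. Indeed $w$ reaches a neighborhood of $4$ before time $T<4$, giving $\Gamma_4(w)<4$; $w$ stays above $-1/4>-1/2$, giving $\Gamma_4(w)<\Gamma_{-1/2}(w)$; during the first dwell $w$ is confined to $[1/4,1/2]$ and stays strictly below $2/3$, so that $w$'s time in $[1/4,1/2]$ before $\Gamma_{2/3}(w)$ is at least $1+\eta$; the same reasoning on the second dwell yields the $[2.2,2.5]$ event. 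Finally, the support theorem for Brownian motion (a standard consequence of the Cameron--Martin theorem) gives $\prob\bigl(\sup_{t\in[0,T]}|B_t-\gamma(t)|<\varepsilon\bigr)>0$, which provides the required lower bound $p>0$.

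The main obstacle in this program is the stability of the two dwell--integral events under uniform perturbations of the path: in general, the functional $w\mapsto\int_0^{\Gamma_a(w)}\mathbbm{1}_{[b,c]}(w_t)\,dt$ is not continuous in the sup norm, both because $\Gamma_a$ is not continuous in $w$ and because $\mathbbm{1}_{[b,c]}$ has jumps. This is circumvented by engineering $\gamma$ so that each dwell sits in the \emph{open} interior of the corresponding interval $(b,c)$ with a buffer much larger than $\varepsilon$, and so that $\gamma$ is strictly separated from the target level $a$ until after the dwell ends; under these conditions an $\varepsilon$-perturbation can neither puncture the dwell nor advance the hitting time $\Gamma_a$ into it, so the lower bound of $1+\eta$ for each dwell-integral is preserved up to an error of at most $\eta/2$. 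A fully alternative route would be an inductive application of the strong Markov property at the successive hitting times $\Gamma_{3/8},\Gamma_{2/3},\Gamma_{2.35},\Gamma_{2.8},\Gamma_4$ combined with explicit positive lower bounds on each segment via Brownian scaling, but this seems markedly more tedious than the support--theorem route.
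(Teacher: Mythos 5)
Your approach — construct an explicit continuous path realizing a strengthened version of each condition, then invoke the support theorem for Brownian motion — is exactly the argument the paper uses (it cites Durrett for the fact that Brownian motion approximates any continuous path in sup norm with positive probability, and asserts the curve can "easily" be chosen). One small slip: with $\gamma$ terminating at exactly $4$ at time $T$, a path $w$ in the $\varepsilon$-tube need not actually hit $4$ (it might end at, say, $4-\varepsilon/2$); you should let $\gamma$ overshoot to $4+2\varepsilon$ or so before time $T<4$ so that $w(T)>4$ forces $\Gamma_4(w)<T<4$ by continuity. With that adjustment the verification of all four events goes through as you describe, and the rest is the same support-theorem argument as the paper's.
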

\begin{proof}
	Note that a standard B.M. can approximate any continuous function starting from $0$ with a positive probability. (See Theorem $5.4$ on Page $206$ of \cite{durrett2018stochastic} for details.) Thus by easily choosing a continuous function satisfying the condition in the event of \eqref{key bound}, we conclude the result of interest by noting that Brownian motion can approximate such a continuous function with positive probability.
\end{proof}
\begin{remark}
	The seemingly mysterious constants in \eqref{key bound} are purposely chosen to meet the later needs in the proof of Lemma \ref{event control}. Particularly, we need these constants to create a certain level of ``redundancy'' so that after introducing the drift term, our O-U process can still stay within the intervals of interest. 
\end{remark}
Then by the scaling invariance of $B_t$, we immediately have for all $\theta>0$,
\beq
\prob \left( \Gamma_{4\theta} < \Gamma_{-\frac12\theta}, \Gamma_{4\theta} < 4\theta^2, \int_0^{\Gamma_{\frac23\theta}} \mathbbm{1}_{ \{B_t \in [\frac14\theta, \frac12\theta]\} } dt >\theta^2, 
\int_0^{\Gamma_{2.8\theta}} \mathbbm{1}_{ \{B_t \in [2.2\theta, 2.5\theta]\} } dt >\theta^2 \right) = p .
\eeq
Now we return to the probability of $A_n$. Let $Z_t^{1+\epsilon_n}$ be the O-U process starting from $1+\epsilon_n$, i.e., $Z_t^{1+\epsilon_n} = 1+\epsilon_n - \int_0^t Z_s^{1+\epsilon_n} ds + B_t $. Then by the strong Markov property for the O-U process,
$$
\prob(A_n) \ge \min \left\{\prob \left( \int_{0}^{\tau_{n-1}} \mathbbm{1}_{ \{Z^{1+\epsilon_n}_t>1+\epsilon_n\} }dt>\epsilon_n^2 \right), \prob \left( \int_0^{\tau_{n-1}} \mathbbm{1}_{ \{Z^{1-\epsilon_n}_t>1+\epsilon_n\} }dt>\epsilon_n^2 \right) \right\}
$$
Now letting $\theta=\epsilon_n$, we look at the event 
\beq
E^{\epsilon_n} := \left\{ \Gamma_{4\epsilon_n} < \Gamma_{-\frac12\epsilon_n}, \Gamma_{4\epsilon_n} < 4\epsilon_n^2, \int_0^{\Gamma_{\frac23\epsilon_n}} \mathbbm{1}_{ \{B_t \in [\frac14\epsilon_n, \frac12\epsilon_n]\} } dt >\epsilon_n^2, 
\int_0^{\Gamma_{2.8\epsilon_n}} \mathbbm{1}_{ \{B_t \in [2.2\epsilon_n, 2.5\epsilon_n]\} } dt >\epsilon_n^2 \right\} .
\eeq
\begin{lemma}
	\label{OU control}
	Given the event $E^{\epsilon_n}$, we have $Z_t^{1+\epsilon_n} \in [0,2]$ a.s. $\forall t \in [0, \Gamma_{4\epsilon_n}]$.
\end{lemma}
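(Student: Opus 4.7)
The plan is to use the explicit solution of the linear SDE $dZ_t^{1+\epsilon_n}=-Z_t^{1+\epsilon_n}\,dt+dB_t$ with initial value $1+\epsilon_n$, namely
\[
Z_t^{1+\epsilon_n}=e^{-t}(1+\epsilon_n)+\int_0^t e^{-(t-s)}\,dB_s,
\]
and then apply stochastic integration by parts (or equivalently the Itô product rule on $e^{s-t}B_s$) to eliminate the stochastic integral in favor of pathwise quantities:
\[
\int_0^t e^{-(t-s)}\,dB_s=B_t-\int_0^t e^{-(t-s)}B_s\,ds.
\]
This rewrites $Z_t^{1+\epsilon_n}$ as a sum of three terms whose sizes can be controlled deterministically on the event $E^{\epsilon_n}$.

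On $E^{\epsilon_n}$, the definition of the hitting times $\Gamma_{4\epsilon_n}$ and $\Gamma_{-\epsilon_n/2}$ together with the ordering $\Gamma_{4\epsilon_n}<\Gamma_{-\epsilon_n/2}$ guarantees the pathwise bound $|B_s|\le 4\epsilon_n$ for all $s\in[0,\Gamma_{4\epsilon_n}]$, and the clause $\Gamma_{4\epsilon_n}<4\epsilon_n^2$ gives a deterministic ceiling $t\le 4\epsilon_n^2$ on the time variable. Plugging these two bounds into the three terms gives
\[
\left|\int_0^t e^{-(t-s)}B_s\,ds\right|\le 4\epsilon_n\,t\le 16\epsilon_n^3,\qquad e^{-t}(1+\epsilon_n)=1+\epsilon_n+O(\epsilon_n^2),
\]
so the upper bound reads $Z_t^{1+\epsilon_n}\le 1+\epsilon_n+4\epsilon_n+16\epsilon_n^3+O(\epsilon_n^2)$, and the lower bound reads $Z_t^{1+\epsilon_n}\ge (1-4\epsilon_n^2)(1+\epsilon_n)-\tfrac{\epsilon_n}{2}-16\epsilon_n^3$. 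Both right-hand sides lie inside $[0,2]$ for all sufficiently small $\epsilon_n$, which is exactly the desired containment. Note that the argument is uniform in $t\in[0,\Gamma_{4\epsilon_n}]$ because the pathwise bounds on $B$ and the time horizon are uniform over this (random but bounded) interval.

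I expect no serious obstacle in this proof, since the key quantitative inputs (the bounds on $B$ and on $\Gamma_{4\epsilon_n}$) are precisely what the event $E^{\epsilon_n}$ was engineered to supply; the only delicacy is the correct handling of the stochastic integral, which is standard via the integration-by-parts identity above. The generous numerical constants built into $E^{\epsilon_n}$ (e.g.\ the spread $4\epsilon_n$ versus $-\tfrac12\epsilon_n$) are what create enough slack so that, after subtracting the small drift contribution $(1-e^{-t})(1+\epsilon_n)=O(\epsilon_n^2)$, the process still remains comfortably inside $[0,2]$; this redundancy is precisely the ``safety margin'' alluded to in the remark following the previous lemma.
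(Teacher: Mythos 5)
Your proof is correct and reaches the same conclusion by a genuinely different route. The paper argues by contradiction: assuming $Z^{1+\epsilon_n}$ exits $[0,2]$ at some time $\bar\Gamma_0\wedge\bar\Gamma_2<\Gamma_{4\epsilon_n}$, it plugs the pathwise integral form $Z_t^{1+\epsilon_n}=1+\epsilon_n-\int_0^t Z_s^{1+\epsilon_n}\,ds+B_t$ into that exit time, bounds the drift integral by $2\cdot 4\epsilon_n^2$ using the fact that $Z$ has stayed in $[0,2]$ strictly before the exit, and deduces $Z_t>0$ (resp.\ $<2$), a contradiction. Your proof is instead direct: you write out the explicit solution of the OU SDE, use the integration-by-parts identity $\int_0^t e^{-(t-s)}\,dB_s=B_t-\int_0^t e^{-(t-s)}B_s\,ds$ to express $Z_t^{1+\epsilon_n}$ entirely in terms of the Brownian path, and then read off two-sided bounds from $|B_s|\le 4\epsilon_n$ and $t\le 4\epsilon_n^2$. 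This avoids both the contradiction scaffolding and the slightly self-referential step of bounding $\int_0^t Z_s\,ds$ via the very containment being proved; in exchange you need the explicit OU solution and one elementary Itô product-rule identity. Both approaches are clean and both give ample slack; the arithmetic in yours ($Z_t\le 1+5\epsilon_n+16\epsilon_n^3$ and $Z_t\ge 1+\tfrac12\epsilon_n-4\epsilon_n^2-20\epsilon_n^3$) is correct for all sufficiently small $\epsilon_n$, which is all that is needed since $\epsilon_n=\epsilon\,2^{-n}$ with $\epsilon$ already small.
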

\begin{proof}
	Otherwise let $\bar{\Gamma}_0$ and $\bar{\Gamma}_2$ be the first time $Z_t^{1+\epsilon_n}$ hits $0$ or $2$ respectively. Without loss of generality, suppose $Z_t^{1+\epsilon_n}$ hits $0$ before $2$ at $[0, \Gamma_{4\epsilon_n}]$ in the event $E^{\epsilon_n}$ and thus we look at the event $\{ \bar{\Gamma}_0< {\Gamma}_{4\epsilon_n}, \bar{\Gamma}_0<\bar{\Gamma}_2 \} \cap E^{\epsilon_n}$. Then within this event there is a.s. $t<4\epsilon_n^2$ s.t. $Z_t^{1+\epsilon_n}\le 0$, $B_t\ge -\frac12\epsilon_n$ and $Z_s^{1+\epsilon_n}<2, \forall s<t$. Note that  
	$$
	Z^{1+\epsilon_n}_t=1+\epsilon_n - \int_0^tZ_s^{1+\epsilon_n}ds +B_t.
	$$
	However,
	$$
	\text{RHS}\ge (1+\epsilon_n) - 2 \cdot 4\epsilon_n^2 -\frac12 \epsilon_n>0 
	$$
	which implies that 
	$$
	\prob\left( \{ \bar{\Gamma}_0< \Gamma_{4\epsilon_n}, \bar{\Gamma}_0<\bar{\Gamma}_2 \} \cap E^{\epsilon_n} \right)=0.
	$$
	Similarly, one also has 
	$$
	\prob\left(\{ \bar{\Gamma}_2< \Gamma_{4\epsilon_n}, \bar{\Gamma}_2<\bar{\Gamma}_0 \} \cap E^{\epsilon_n}\right)=0.
	$$
\end{proof}

Now we are able to show that with at least a uniformly positive probability $p>0$, an O-U process stays to the right of 1 for some positive fraction of time between $\tau_n$ and $\tau_{n-1}$ to trigger the Poisson jump; the detailed proof can be found in Appendix \ref{detail}.
\begin{lemma}
	\label{event control}
	For all sufficiently small $\epsilon>0$ and $n\ge 1$, 
	\bae
	&\prob \left( \int_0^{\tau_{n-1}} \mathbbm{1}_{ \{Z^{1+\epsilon_n}_t>1+\epsilon_n\} }dt>\epsilon_n^2 \right) \ge \prob(E^{\epsilon_n} )=p,\\ 
	&\prob \left( \int_0^{\tau_{n-1}} \mathbbm{1}_{ \{Z^{1-\epsilon_n}_t>1+\epsilon_n\} }dt>\epsilon_n^2 \right)  \ge \prob(E^{\epsilon_n} )=p.
	\eae
Thus we have 
$$
\prob(A_n) \ge \prob(E^{\epsilon_n})=p>0.
$$
\end{lemma}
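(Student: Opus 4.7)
The plan is to condition on the Brownian event $E^{\epsilon_n}$, whose probability equals $p$ for every $n$ by Brownian scaling applied to Lemma \ref{key bound}, and to transfer its pathwise implications from $B_t$ to each of the two O-U trajectories $Z^{1\pm\epsilon_n}_t$. The guiding heuristic is that on the relevant time scale $t \le \Gamma_{4\epsilon_n} < 4\epsilon_n^2$ the drift integral $\int_0^t Z^{1\pm\epsilon_n}_s\, ds$ is at most $8\epsilon_n^2$ by a uniform $[0,2]$ bound on the trajectories, and this quadratic error is dominated by the $O(\epsilon_n)$ buffers hard-coded into the constants $\frac14,\frac12,\frac23,2.2,2.5,2.8$ appearing in $E^{\epsilon_n}$.

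\textbf{Step 1.} I would first extend Lemma \ref{OU control} to $Z^{1-\epsilon_n}_t$ by repeating the same first-exit contradiction argument: on $E^{\epsilon_n}$, an upper exit from $[0,2]$ before $\Gamma_{4\epsilon_n}$ is ruled out because $B_t \le 4\epsilon_n$ forces $Z^{1-\epsilon_n}_t \le 1+3\epsilon_n < 2$, while a lower exit is ruled out because $B_t > -\frac12\epsilon_n$ and $t < 4\epsilon_n^2$ make the identity $0 = 1-\epsilon_n - \int_0^t Z^{1-\epsilon_n}_s\,ds + B_t$ inconsistent for small $\epsilon_n$. Combined with Lemma \ref{OU control}, this yields the key uniform bound
\[
\left|Z^{1\pm\epsilon_n}_t - (1\pm\epsilon_n + B_t)\right| \le 8\epsilon_n^2, \qquad \forall\, t \le \Gamma_{4\epsilon_n},
\]
on the event $E^{\epsilon_n}$.

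\textbf{Step 2.} For the first inequality I would work in the window $[0,\Gamma_{\frac23\epsilon_n}]$. There $B_t \in (-\frac12\epsilon_n, \frac23\epsilon_n)$, so $Z^{1+\epsilon_n}_t \in (1+\frac12\epsilon_n - 8\epsilon_n^2,\, 1+\frac53\epsilon_n)$, which sits strictly inside $(1-\epsilon_{n-1}, 1+\epsilon_{n-1})$ once $\epsilon_n$ is small, proving $\Gamma_{\frac23\epsilon_n} \le \tau_{n-1}$. Moreover, whenever $B_t \in [\frac14\epsilon_n, \frac12\epsilon_n]$ the same estimate gives $Z^{1+\epsilon_n}_t \ge 1+\frac54\epsilon_n - 8\epsilon_n^2 > 1+\epsilon_n$, and integrating the indicator yields
\[
\int_0^{\tau_{n-1}} \mathbbm{1}_{\{Z^{1+\epsilon_n}_t > 1+\epsilon_n\}}\, dt \ge \int_0^{\Gamma_{\frac23\epsilon_n}} \mathbbm{1}_{\{B_t \in [\frac14\epsilon_n, \frac12\epsilon_n]\}}\, dt > \epsilon_n^2
\]
on $E^{\epsilon_n}$. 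The second inequality is entirely parallel, using the window $[0, \Gamma_{2.8\epsilon_n}]$ together with $B_t \in [2.2\epsilon_n, 2.5\epsilon_n]$: the arithmetic identities $1-\epsilon_n + 2.2\epsilon_n - 8\epsilon_n^2 > 1+\epsilon_n$ and $1-\epsilon_n + 2.8\epsilon_n < 1+\epsilon_{n-1}$ are precisely the reason for the seemingly mysterious constants chosen in Lemma \ref{key bound}.

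\textbf{Main obstacle.} The only delicate point is that ``small enough $\epsilon_n$'' must be taken uniformly in $n \ge 1$. Since $\epsilon_n = \epsilon\cdot 2^{-n} \le \epsilon/2$, the finitely many inequalities appearing in Steps 1--2 (the most restrictive being of the form $0.2\epsilon_n > 8\epsilon_n^2$) can all be enforced by shrinking $\epsilon$ once and for all at the start of the argument. Once this is secured, $\prob(A_n) \ge \prob(E^{\epsilon_n}) = p$ holds with the common constant $p$ from Lemma \ref{key bound} for every $n \ge 1$, which is exactly what the lemma claims. The potential danger would be that the drift estimate is too weak to separate $Z^{1\pm\epsilon_n}_t$ from the critical levels $1+\epsilon_n$ and $1\pm\epsilon_{n-1}$, but since the relevant integration time is itself $O(\epsilon_n^2)$ the $O(\epsilon_n^2)$ drift bound is essentially tight and the method is robust.
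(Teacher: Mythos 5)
Your proposal is correct and follows essentially the same argument as the paper's Appendix~B proof: bound the drift integral by $8\epsilon_n^2$ using the $[0,2]$ confinement on $[0,\Gamma_{4\epsilon_n}]$ (the paper states Lemma~\ref{OU control} only for $Z^{1+\epsilon_n}$ but implicitly uses it for $Z^{1-\epsilon_n}$ too, which you make explicit), then verify that the windows $\Gamma_{\frac23\epsilon_n}$ and $\Gamma_{2.8\epsilon_n}$ lie before $\tau_{n-1}$ and that the Brownian bands $[\frac14\epsilon_n,\frac12\epsilon_n]$ and $[2.2\epsilon_n,2.5\epsilon_n]$ push the respective O-U path above $1+\epsilon_n$. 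Your remark that the finitely many inequalities of the form $c\,\epsilon_n > 8\epsilon_n^2$ are enforced uniformly in $n$ by shrinking $\epsilon$ once is exactly the content of the paper's ``for all sufficiently small $\epsilon$'' hypothesis.
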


With the above preparation, we can finish the proof of \eqref{speed aim}.
\begin{proof}
	[Proof of Proposition \ref{polynomial decay}:] For any $\epsilon>0$ (without loss of generality $\epsilon<\frac13$), define $n_1=\left\lfloor \frac{\log(\epsilon^{-1})+\log3}{\log\left(\frac{1}{1-p}\right)} \right\rfloor + 1$ and a constant $\gamma \in (1, +\infty)$ that depends only on $p$ as
	\beq
	\label{poly gamma}
	\gamma:= \frac{4\log2}{\log\left(\frac{1}{1-p}\right)}  + 1
	\eeq
	so we have $\epsilon_{n_1}=\epsilon\cdot 2^{-n_1} \ge \epsilon^\gamma$. Recall that $\Gamma$ obeys the exponential distribution $\exp(1)$ and we define a ``globally failed event",
	$$
	B=\left\{ \Gamma \ge \epsilon^{-\gamma} \right\} \cup \{ \tau_0>\epsilon \} \cup \cap_{n=1}^{n_1} A_n^c
	$$
	and call $B^c$ the globally successful event. By Lemmas \ref{OU control} and \ref{event control} we have 
	\bae
	\prob(B)\le& \exp(-\epsilon^{-\gamma}) + \exp( -\epsilon^{-\frac14} ) + (1-p)^{n_1}\\
	<& \frac13\epsilon + \frac13\epsilon + \frac13\epsilon <\epsilon.
	\eae
	Recalling the random set $\hat{I}(\eta_1,\delta)$ in \eqref{calculations}, then for any $\delta< \epsilon^{4\gamma} <\epsilon_{n_1}$, within $B^c$ we have $\{\tau_0\le\epsilon\}$ and $\exists$ $k\in[1,n_1]$ s.t. 
	\bae
	\frac{\mathfrak{L}\left(\hat{I}(\epsilon, \delta)\right)}{\delta} = \frac{\int_0^{\epsilon} \mathbbm{1}_{ \{Z^1_t > 1+\delta \} }dt}{\delta} \ge \frac{\int_{\tau_k}^{\tau_{k-1}} \mathbbm{1}_{ \{Z^1_t > 1+\epsilon_k\} }dt}{\delta} \ge \frac{\epsilon^2_{n_1}}{\delta} \ge \epsilon^{-2\gamma}>\Gamma,
	\eae
	which gives \eqref{speed aim}. Thus we have found a polynomial order of $\eta_1$ as the lower bound for $\eta_0$ and the same for $\eta_2$ and the proof of \eqref{conv rate} is complete.	
\end{proof}


\section{Numerical tests}\label{numerical}

In previous sections we have shown that the state-dependent jump-diffusion process \(X_t^\delta\) converges to \(X_t\) in distribution with a polynomial-order convergence rate. However, quantifying the correct convergence rate of \(X_t^\delta\) remains an open question. Recently, a structure-preserving numerical scheme for the Fokker-Planck equation \eqref{eq_planck} based on Scharfetter-Gummel reformulation was proposed in \cite{Yantong}. With this numerical scheme, we aim to explore the convergence structure and study \(X_t^\delta\) in terms of density functions through the Fokker-Planck equation \eqref{eq:FKJD} together with its nonlinear cases. Numerical study of the density function \(f^\delta\) not only provides numerical evidence of the convergence rate of the process, but also indicates signs of self-similar structure when \(\delta\to0\).

This section is outlined as follows. First, in Section~\ref{sec:intro}, we introduce the  Scharfetter-Gummel reformulation on Fokker-Planck equation and the detailed construction of the numerical schemes. Then, in Section~\ref{sec:conv}, we numerically examine the convergence rate of the approximation error. Last, in Section~\ref{sec:self}, we study the self-similar structure of \(f^\delta\) when it vanishes on \((1,+\infty)\) as \(\delta\to0\). We also note that the Fokker-Planck equation \eqref{f0delta equation} for the killing process \(\widetilde{X_t^\delta}\) (defined in Equation \eqref{OUkilling}) is also considered in this section.

\subsection{Introduction to the scheme}
\label{sec:intro}

First we introduce the nonlinear extensions of the Fokker-Planck equations for the jump-diffusion process  \(X_t^\delta\) and the killing process \(\widetilde{X_t^\delta}\), which are similar to the Fokker-Planck equation \eqref{master equation0} associated with \(X_t\). We only show the following nonlinear equation for the density function $f^\delta$ of the jump-diffusion process \(X_t^\delta\);  the nonlinear equation for the density function \(f_0^\delta\) of the killing process \(\widetilde{X_t^\delta}\) can be derived in a similar way.
\begin{equation}
	\label{EqNonlinearD}
	\left\{
	\begin{aligned}
		&\frac{\partial f^{\delta}}{\partial t}(x,t)+\frac{\partial}{\partial x}[(-v+bN^{\delta}(t)) f^{\delta}(x,t)]- a(N^\delta(t))\frac{\partial^2 f^{\delta}}{\partial x^2} (x,t) \\ &=N^\delta(t)\delta(x)-\lambda^{\delta}(x) f^{\delta}(x, t), \quad (x,t) \in (- \infty,1)\times [0,+\infty), \\
		&N^\delta (t) =\int_{\mathbb R} \lambda^\delta(y)f^\delta(y,t)dy,
		f^\delta(-\infty,t)=f^\delta(+\infty,t)=0, \quad t>0,
	\end{aligned}
	\right. 
\end{equation}
where the terms \(a(N^\delta(t))\) and \(-v+bN^{\delta}(t)\) incorporate the effect of the mean firing rate on the dynamics of the density function at the macroscopic level. In particular, \(b\) models the connectivity of the neuron networks: \(b>0\) describes excitatory networks and \(b<0\) describes inhibitory networks. In this section, we assume \(a\equiv1\) and we are concerned with the convergence behavior with different connectivity parameters \(b\).

The Scharfetter-Gummel reformulation on Equation \eqref{EqNonlinearD} is given as follows:
\begin{eqnarray}\label{SGreform}
	\frac{\partial f^{\delta}}{\partial t}(x,t)-a\frac{\partial}{\partial x}\left(M^\delta(x,t)\partial_x\left(
	\frac{f^\delta(x,t)}{M^\delta(x,t)}\right)\right) =N^\delta(t)\delta(x)-\lambda^{\delta}(x) f^{\delta}(x, t),\nonumber\\ (x,t)\in(- \infty, 1)\times [0,+\infty),
\end{eqnarray}
where 
\[
M^\delta(x,t)=\exp \left(-\frac{(x-b N^\delta(t))^{2}}{2 a}\right).
\]
The numerical scheme for Equation \eqref{EqNonlinearD} is based on this reformulated equation.

Even though the jump-diffusion process \(X_t^\delta\) is of better regularity than \(X_t\), numerical approximation of \(f^\delta(x,t)\) near \(x=1\) is still at risk of being inaccurate especially when $\delta$ is close to 0. Therefore, we apply the logistic scaling of the density function to partition a denser grid around \(x=1\). We take the computation domain as \([-4,4]\) and assume homogeneous Dirichlet boundary condition for the density functions. We make the substitution
\begin{equation}\label{eq:subs}
	y=h_L(x)=\frac{1}{1+e^{-\left(x-1\right)}},\quad x\in[-4,4],
\end{equation}
and denote \(q^\delta(y,t)=f^\delta(g_L(y),t)\), where \(g_L\) stands for inverse function of the logistic function \(h_L\). Figure \ref{fig:L} shows an illustration of the scaling.

\begin{figure}
	\centering
	\includegraphics[width=10cm,height=8cm]{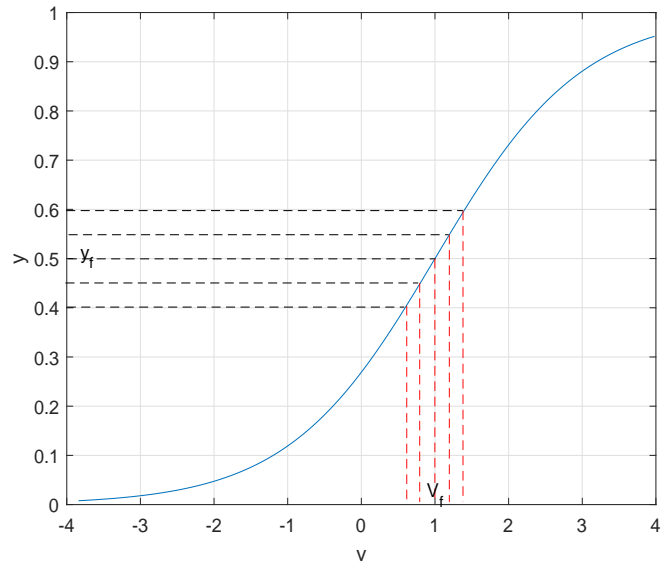}
	\caption{Illustration of the logistic scaling. In the figure, we partition a denser grid near \(V_f=1\) through the logistic scaling.}
	\label{fig:L}
\end{figure}

Then we put Equation \eqref{eq:subs} into Equation \eqref{SGreform} to derive an equation for \(q^\delta(y,t)\) on the computational domain
\begin{eqnarray}\label{eq:subsFKmod}
	\frac{\partial q^\delta}{\partial_{t}}(y, t)- \frac{a}{g_L'(y)}\partial_{y}\left(\frac{M^\delta\left(g_L(y),t\right)}{g_L'(y)} \partial_{y}\left(\frac{q^\delta(y, t)}{M^\delta\left(g_L(y),t\right)}\right)\right)=N^\delta(t)\delta\left(y-y_r\right)\nonumber\\-\lambda^\delta\left(g_L(y)\right)q^\delta(y,t),\quad(y,t)\in\left[\frac{1}{1+e^5},\frac{1}{1+e^{-3}}\right]\times[0,+\infty),
\end{eqnarray}
where \(g_L'(y)=\frac{1}{y-y^2}\) and the reset point \(y_r=h_L(0)=\frac{1}{1+e}\). In addition, the mean firing rate function \(N^\delta\) is given as follows:
\begin{equation}\label{firingrateDeter}
	N^{\delta}(t) =\int_{\frac{1}{1+e^5}}^{\frac{1}{1+e^{-3}}} g_L'(z)\lambda^{\delta}\left(g_L(z)\right) q^{\delta}(z, t) d z.
\end{equation}

The numerical scheme applied in this section is based on discretization of Equations \eqref{eq:subsFKmod} and \eqref{firingrateDeter}. Let \(q^\delta_{j,m}\) stand for the numerical value of \(q^\delta\) at \(y_j=jh+\frac{1}{1+e^5}\in[\frac{1}{1+e^5},\frac{1}{1+e^{-3}}]\) and \(t_m=m\tau\geq0\), where \(h\) and \(\tau\) denote spatial and temporal step lengths. Let \(N_m^\delta\) denote the numerical approximation of the firing rate function \(N^\delta\) at \(t_m\). Also note that the reset point \(y_r\) is a grid point denoted as \(y_r=y_D\) where \(D\in\mathbb{N}^*\).

We apply the semi-implicit scheme to discretize the equations \cite{Yantong}. In other words, we treat \(q^\delta_{j}\) implicitly but treat \(N^\delta\) explicitly (including the \(N^\delta\) in the term \(M^\delta(x,t)\)). The scheme is as follows:
\begin{eqnarray}
	&&\frac{q^\delta_{j,m+1}-q^\delta_{j,m}}{\tau}-\frac{a}{h^2g_L'\left(y_j\right)}\left(\frac{M^\delta\left(g_L\left(y_{j+\frac{1}{2}}\right),t_m\right)}{g_L'\left(y_{j+\frac{1}{2}}\right)}\left(\frac{q^\delta_{j+1,m+1}}{M^\delta\left(g_L\left(y_{j+1}\right),t_m\right)}-\frac{q^\delta_{j,m+1}}{M^\delta\left(g_L\left(y_j\right),t_m\right)}\right)\right.\nonumber\\
	&&\left.-\frac{M^\delta\left(g_L\left(y_{j-\frac{1}{2}}\right),t_m\right)}{g_L'\left(y_{j-\frac{1}{2}}\right)}\left(\frac{q^\delta_{j,m+1}}{M^\delta\left(g_L\left(y_{j}\right),t_m\right)}-\frac{q^\delta_{j-1,m+1}}{M^\delta\left(g_L\left(y_{j-1}\right),t_m\right)}\right)\right)\nonumber\\
	&&=\frac{1}{h}N_n^\delta I(y_j=y_D)-\lambda^\delta\left(g_L\left(y_j\right)\right)q^\delta_{j,m+1},
\end{eqnarray}
and
\begin{equation}
	N^\delta_m=h\sum_{j}g_L'\left(y_j\right)\lambda^\delta\left(g_L\left(y_j\right)\right)q_{j,m}^\delta,
\end{equation}
where \(I(y)\) is the indicator function.

In our numerical simulations, we simulate excitatory and inhibitory networks with different connectivity parameters \(b\). We use the same Gaussian distribution as the initial condition for all of the numerical tests:
\begin{equation}\label{eq:Max}
	f_{G}(x)=\frac{1}{\sqrt{2 \pi} \sigma_{0}} e^{-\frac{\left(x-x_{0}\right)^{2}}{2 \sigma_{0}^{2}}},
\end{equation}
where \(x_0=-1\) and \(\sigma_0^2=0.01\) are two given parameters. Since we assume that \(X_t^\delta\) and \(\widetilde{X_t^\delta}\) start at a given point in previous sections, we choose \(\sigma_0\) to be very small in order to approximate the one-point initial distribution of the processes. The computing time is fixed to \(t_{\max}=1\). In addition, we consider the rate function \(\lambda^\delta(x)\) as follows:
\begin{equation}\label{eq:disrate}
	\lambda^{\delta}(x)=\left\{\begin{array}{ll}
		0, & x \leq 1, \\
		\frac{1}{\delta}, & x \geq 1,
	\end{array}\right.
\end{equation}
which is slightly different from the continuous rate of Equation \eqref{continuerate}.

\begin{remark}
	
	In fact, with the same initial data, numerical solutions of the Fokker-Planck equations with two different rate functions are almost the same. Convergence and asymptotic behavior for the two cases (Equations \eqref{continuerate} and \eqref{eq:disrate}) are similar, though the convergence exponents are slightly different. However, \eqref{eq:disrate} is of a simpler form, which facilitates the convergence study. Hence, we only consider the rate function defined in Equation \eqref{eq:disrate} in numerical tests.
	
\end{remark}

\subsection{Convergence rates}
\label{sec:conv}

In this subsection we aim to investigate the convergence of the jump-diffusion process \(X_t^\delta\) (defined in Equation \eqref{eq:OU2}) and the killing process \(\widetilde{X_t^\delta}\) (defined in Equation \eqref{OUkilling}) as \(\delta\to0\) through numerical examination of the Fokker-Planck equations of the two processes (see Equations \eqref{eq:FKJD} and \eqref{f0delta equation}). We compute the discrepancies between the density functions and firing rate functions (defined in Equation \eqref{EqNonlinearD}) of the two processes, i.e. we consider  density discrepancy 
\begin{equation}\label{disc1}
	D^f(\delta)=\left\|f^\delta(x,1)-f(x,1)\right\|_{\infty},\quad D^{f_0}(\delta)=\left\|f_0^\delta(x,1)-f_0(x,1)\right\|_{\infty},
\end{equation}
and firing rate discrepancy 
\begin{equation}\label{disc2}
	D^N(\delta)=\left\|N^\delta(t)-N(t)\right\|_\infty,\quad D^{N_0}(\delta)=\left\|N_0^\delta(t)-N_0(t)\right\|_\infty,
\end{equation}
where \(\|\cdot\|_{\infty}\) denotes the \(L^\infty\) norm in space or time. Here the density functions \(f(x,t),f_0(x,t)\) and firing rate functions \(N(t),N_0(t)\) of \(X_t\) and its killing process \(\widetilde{X_t}\) (defined in a similar way using Equation \eqref{OUkilling}) are obtained by numerically solving the nonlinear Fokker-Planck equation \eqref{master equation0} with nonlinear drift and diffusion term \eqref{nonlinear}  using the scheme in Section~\ref{sec:intro}.

In Figure \ref{fig:rate}, we show the results of simulating the cases \(\delta=\frac{1}{2^k}\) (\(0\leq k\leq 7,k\in\mathbb{N}\)) with different parameters \(b\), where we consider the evolution of the discrepancy functions defined in Equations \eqref{disc1} and \eqref{disc2} as \(\delta\) goes to 0. The convergence of the density functions and firing rate functions are roughly linear when \(\delta\) is moderately small, while the rates of convergence of the cases with different connectivity parameters \(b\) vary.

Then we define the convergence rates for the discrepancy functions as \(\delta\to0\) as follows:
\begin{equation}\label{defRate}
	D^f(\delta)=A_f\delta^{R^f}+o(\delta^{R^f}),\delta\to0,
\end{equation}
where \(R^f\) denotes the convergence rate of density function \(f^\delta\) and \(A_f\) is a fixed parameter. We can define convergence rates \(R^{f_0}, R^N, R^{N_0}\) for the discrepancy functions in Equations \eqref{disc1} and \eqref{disc2} in the similar way to Equation \eqref{defRate}.

In Table \ref{tab:rate1} we show the convergence rates of the functions with different \(b\). The convergence rates are computed through linear fitting after eliminating the data when \(\delta\) is too large or too small in order to avoid inaccuracy. More specifically, we retain only the data when \(\delta=\frac{1}{2^k}\) where \(k=4,5,6,7\) for the linear fitting.

We remark that the connectivity parameter $b$ is chosen to be moderately small such that the solution of \eqref{master equation0}  with non-linear terms \eqref{nonlinear} does not blow up, and we have observed that $f^\delta(x,t)$ converges to $f(x,t)$ as $\delta \rightarrow 0$. However, when $b>0$ increases, the solution to \eqref{master equation0}  with non-linearity \eqref{nonlinear} may blow up in finite time while the solution of \eqref{EqNonlinearD} remains globally well posed. In fact, time periodic solutions have been shown to exist or been numerically observed for variant regularized models. The interested reader may refer to \cite{cormier2020hopf} and \cite{ikeda2021theoretical} for detailed discussions.

\subsection{Self-similar structure}
\label{sec:self}

As \(\delta\to0\), the jump-diffusion process \(X_t^\delta\) converges to \(X_t\), which takes values on the half space \((-\infty,1]\) rather than the whole space. The exact process of how \(X_t^\delta\) vanishes on \((1,+\infty)\) remains an open question. In this subsection, we aim to study  the self-similar profile of the density function \(f^\delta\) of \(X_t^\delta\) on \((1,+\infty)\) through numerical experiments. 

\begin{center}
	
	\begin{tabular}{c|ccccc}
		\toprule
		Jump-diffusion process \(X^\delta_t\) & \(b=1\) & \(b=0.5\) & \(b=0\) & \(b=-0.5\) & \(b=-1\)  \\
		\midrule
		Convergence rate \(R^f\)& 0.3716& 0.3187&	0.3505& 0.3766& 0.3961 \\
		Convergence rate \(R^{f_0}\) &0.3365& 0.3832& 0.4092&	0.4262& 0.4307 \\
		\midrule
		Killing process \(\widetilde{X_t^\delta}\)  & \(b=1\) & \(b=0.5\) & \(b=0\) & \(b=-0.5\) & \(b=-1\)  \\ \midrule
		Convergence rate    \(R^{N}\) &0.3416& 0.3856&	0.4122& 0.4219& 0.4228  \\
		Convergence rate \(R^{N_0}\) & 0.4166& 0.4302& 0.4309& 0.4322& 0.4302 \\
		\bottomrule
	\end{tabular}
	\captionof{table}{Convergence rates for density functions and firing rate functions of the Fokker-Planck equations of two processes with different connectivity parameter \(b\). See Equation \eqref{defRate} for the definition of the convergence rates of the functions. See Equations \eqref{eq:FKJD}, \eqref{f0delta equation} and \eqref{EqNonlinearD} for the Fokker-Planck equations of the jump-diffusion process \(X^\delta_t\) and the killing process \(\widetilde{X_t^\delta}\).}
	\label{tab:rate1}
\end{center}

We assume an ansatz for \(f^\delta\) when \(\delta\to0\) as follows:
\begin{equation}\label{eq:fdeltaEstimate}
	f^\delta(x,t)=\delta^\alpha\psi\left(\delta^\beta(x-1)\right)+o\left(\delta^\alpha\right),\quad \forall x\in[1,+\infty),
\end{equation}
where \(\psi\) is defined on \(\mathbb{R}^+\) and \(\alpha,\beta\) are two fixed parameters. In this subsection, we aim to explore the self-similar structure of \(f^\delta\) (defined in Equation \eqref{eq:FKJD}) with such an ansatz and find the fixed parameters \(\alpha\) and \(\beta\) numerically. Moreover, we also make a similar ansatz for the density \(f_0^\delta\) of the killing process \(\widetilde{X_t^\delta}\) (defined in Equation \eqref{f0delta equation}) as a reference, since the \(f_0^\delta\) display a similar vanishing structure to \(f^\delta\).

Numerical examinations involve the cases of \(\delta=\frac{1}{2^k}\) (\(0\leq k\leq 7,k\in\mathbb{N}\)) with different parameters \(b\). Through similar data choices and linear fitting, numerical results for \(\alpha\) and \(\beta\) are shown in Table \ref{tab:rate2}.

Finally, in Figure \ref{fig:psi}, we take the values \(\alpha\) and \(\beta\) in Table \ref{tab:rate2} in ansatz \eqref{eq:fdeltaEstimate} for \(X_t^\delta\) and plot the profiles of \(\psi\) for each \(\delta\) with connectivity parameter \(b=0,1\). Numerically, \(\psi\) is nearly independent of \(\delta\) and decays exponentially in \(y\). Therefore, we conclude that it is very likely that \(f^\delta(x)\) exhibits the self-similar profile in Equation \eqref{eq:fdeltaEstimate} when \(x\geq1\).

\begin{center}
	\begin{tabular}{c|ccccc}
		\toprule
		Jump-diffusion process \(X^\delta_t\) & \(b=1\) & \(b=0.5\) & \(b=0\) & \(b=-0.5\) & \(b=-1\)  \\
		\midrule
		Values of \(\alpha\)& 0.2713&0.3187&0.3505&0.3766&0.3961 \\
		Values of \(\beta\) &-0.4256&-0.4363&-0.4283&-0.4317&-0.4268\\ \midrule 
		Killing process \(\widetilde{X_t^\delta}\)  & \(b=1\) & \(b=0.5\) & \(b=0\) & \(b=-0.5\) & \(b=-1\)  \\
		\midrule
		Values of \(\alpha\) &0.3448&0.3856&0.4122&0.4344&0.4507\\
		Values of \(\beta\) &-0.4307&-0.4148&-0.4136&-0.4317&-0.4317\\
		\bottomrule
	\end{tabular}
	\captionof{table}{Numerical values of the parameters \(\alpha\) and \(\beta\) in ansatz \eqref{eq:fdeltaEstimate}. Parameters of the two processes with different connectivity \(b\) are shown in the table. See Equations \eqref{eq:FKJD}, \eqref{f0delta equation} and \eqref{EqNonlinearD} for the Fokker-Planck equations.}
	\label{tab:rate2}
\end{center}

\begin{figure}
	\centering
	\includegraphics[width=0.48\textwidth]{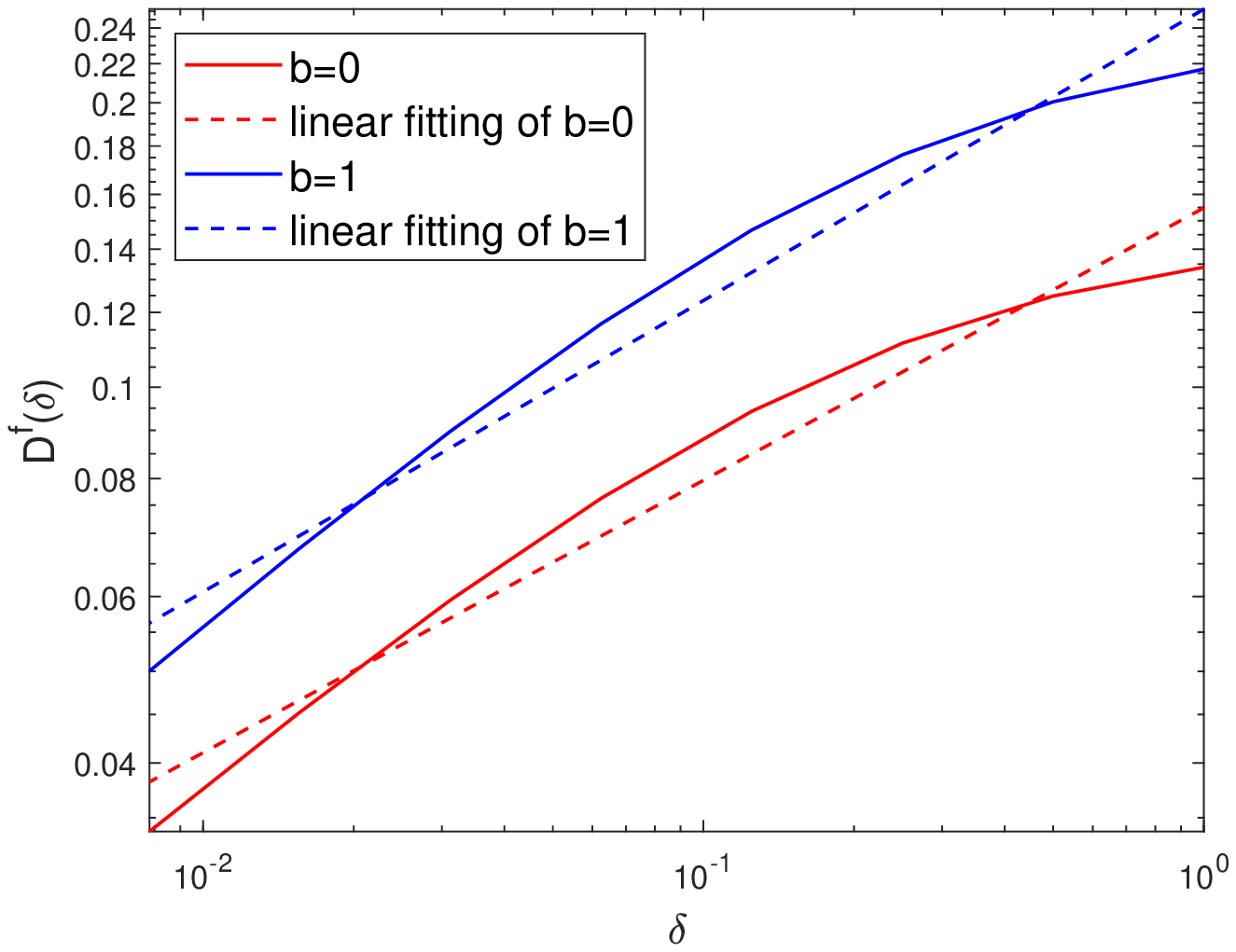}
	\includegraphics[width=0.48\textwidth]{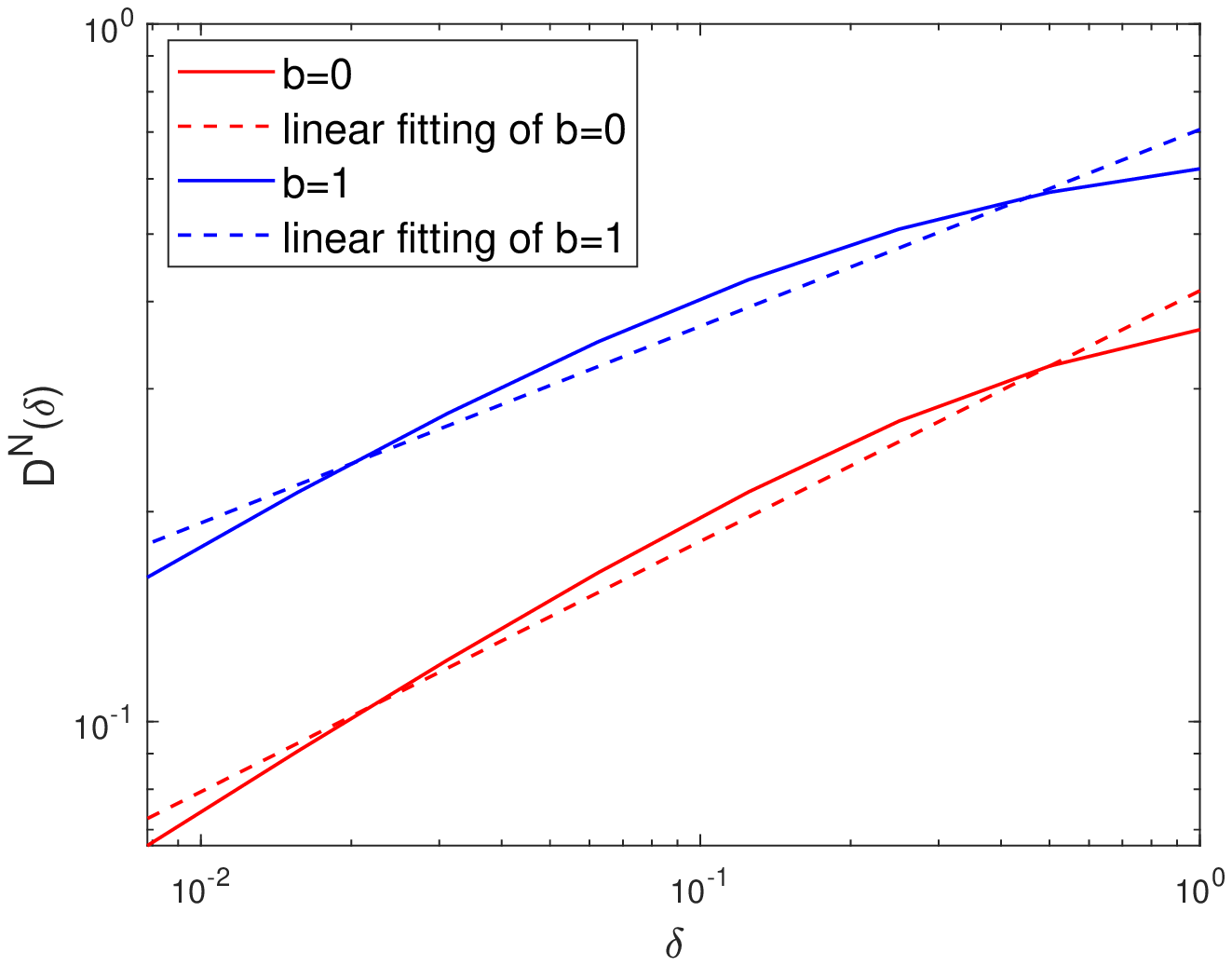}
	\includegraphics[width=0.48\textwidth]{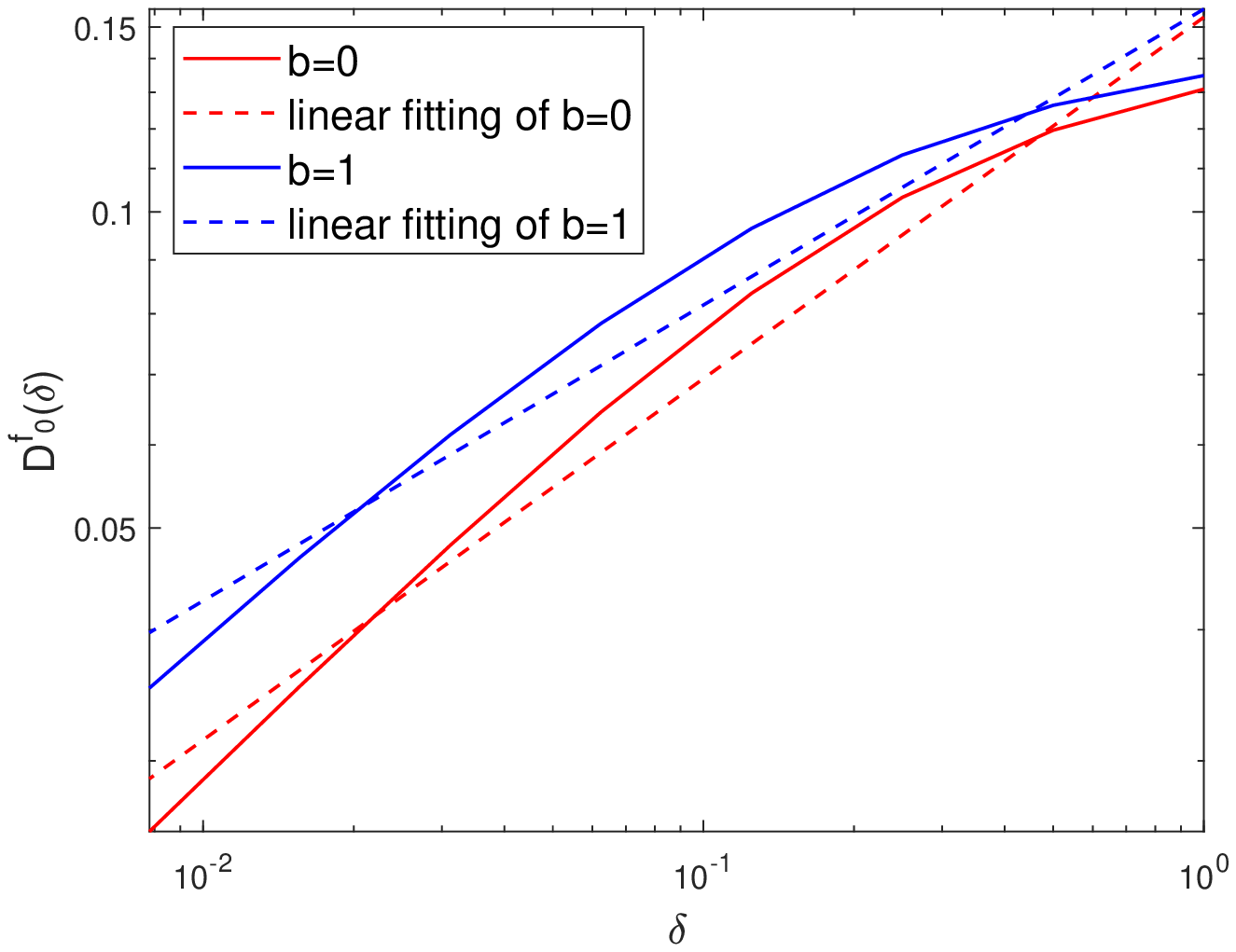}
	\includegraphics[width=0.48\textwidth]{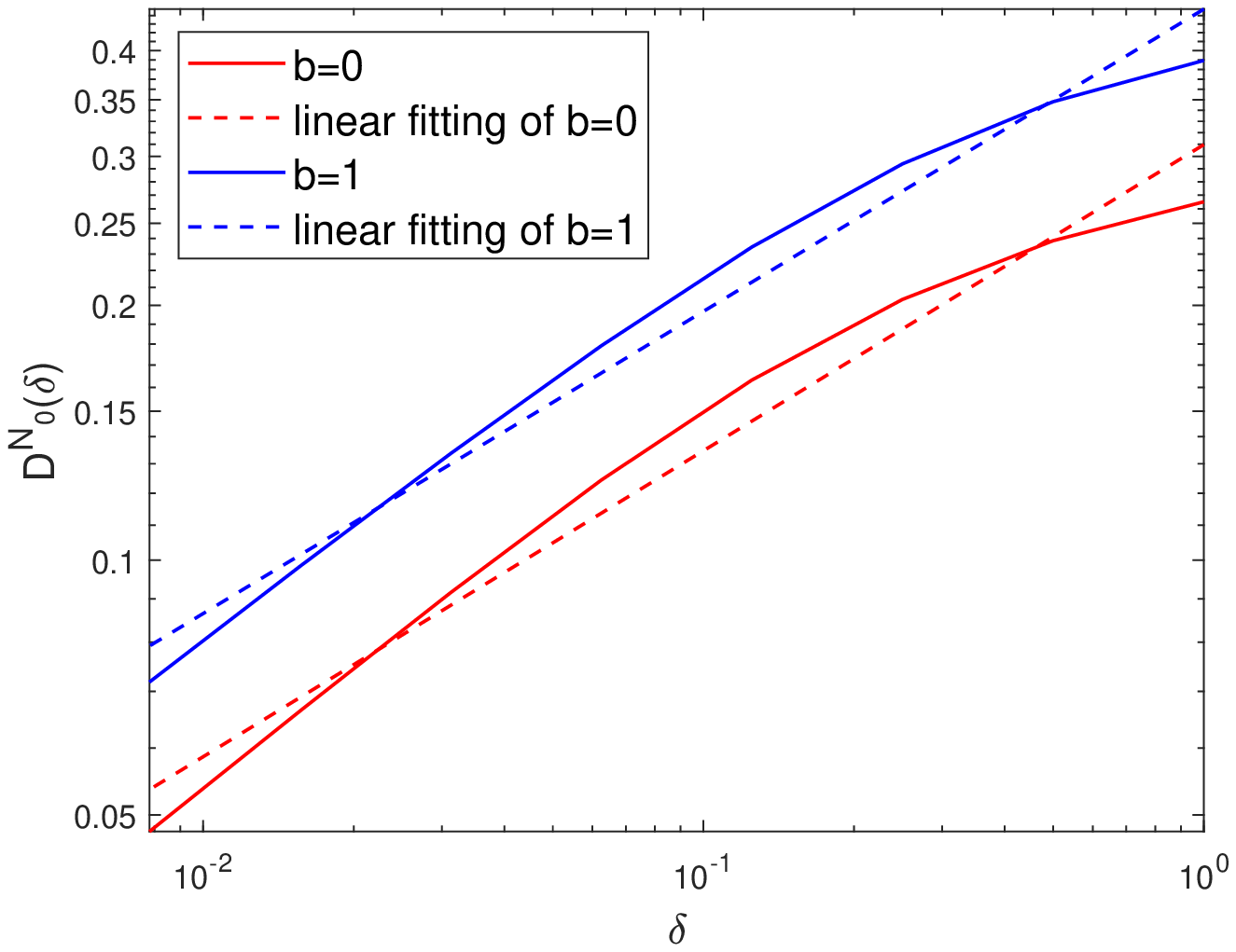}
	\caption{(Convergence of the density functions and the firing rate functions of the jump-diffusion process \(X_t^\delta\) and killing process \(\widetilde{X_t^\delta}\) with parameter \(b=0\) and \(b=1\).) These figures show the evolution of the discrepancy functions defined in Equations \eqref{disc1} and \eqref{disc2} as \(\delta\to0\). Top: the density discrepancy \(D^f(\delta)\) and firing rate discrepancy \(D^N(\delta)\) of the jump-diffusion process \(X_t^\delta\) with connectivity parameter \(b=0\) and \(b=1\). See Equations \eqref{eq:FKJD} and \eqref{EqNonlinearD} for the Fokker-Planck equations of the process \(X_t^\delta\). Bottom: the density discrepancy \(D^{f_0}(\delta)\) and firing rate discrepancy \(D^{N_0}(\delta)\) of the killing process \(\widetilde{X_t^\delta}\) with connectivity parameter \(b=0\) and \(b=1\). See Equations \eqref{f0delta equation} and \eqref{EqNonlinearD} for the Fokker-Planck equations of the process \(\widetilde{X_t^\delta}\)}.
	\label{fig:rate}
\end{figure}

\begin{figure}
	\centering
	\includegraphics[width=0.48\textwidth]{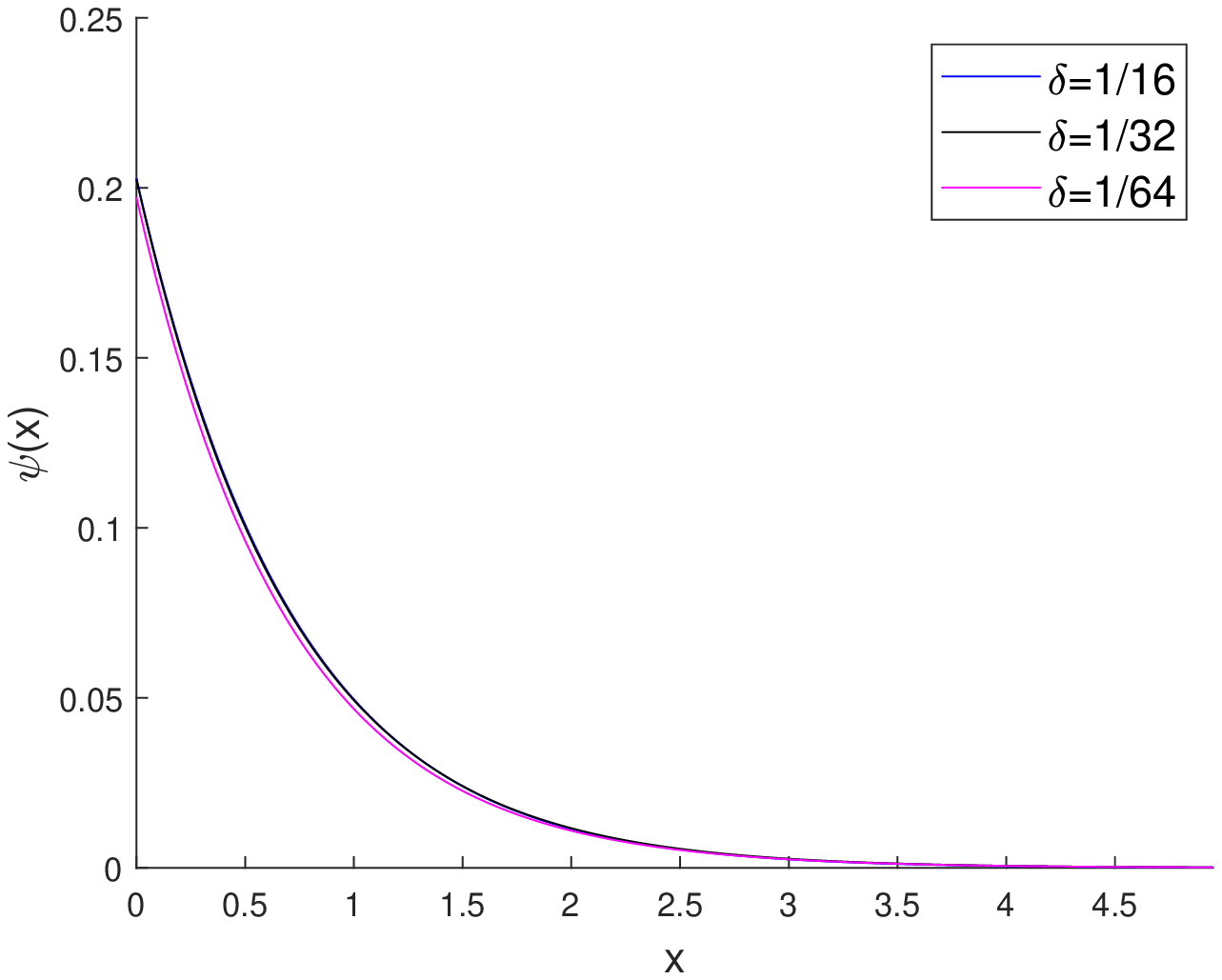}
	\includegraphics[width=0.48\textwidth]{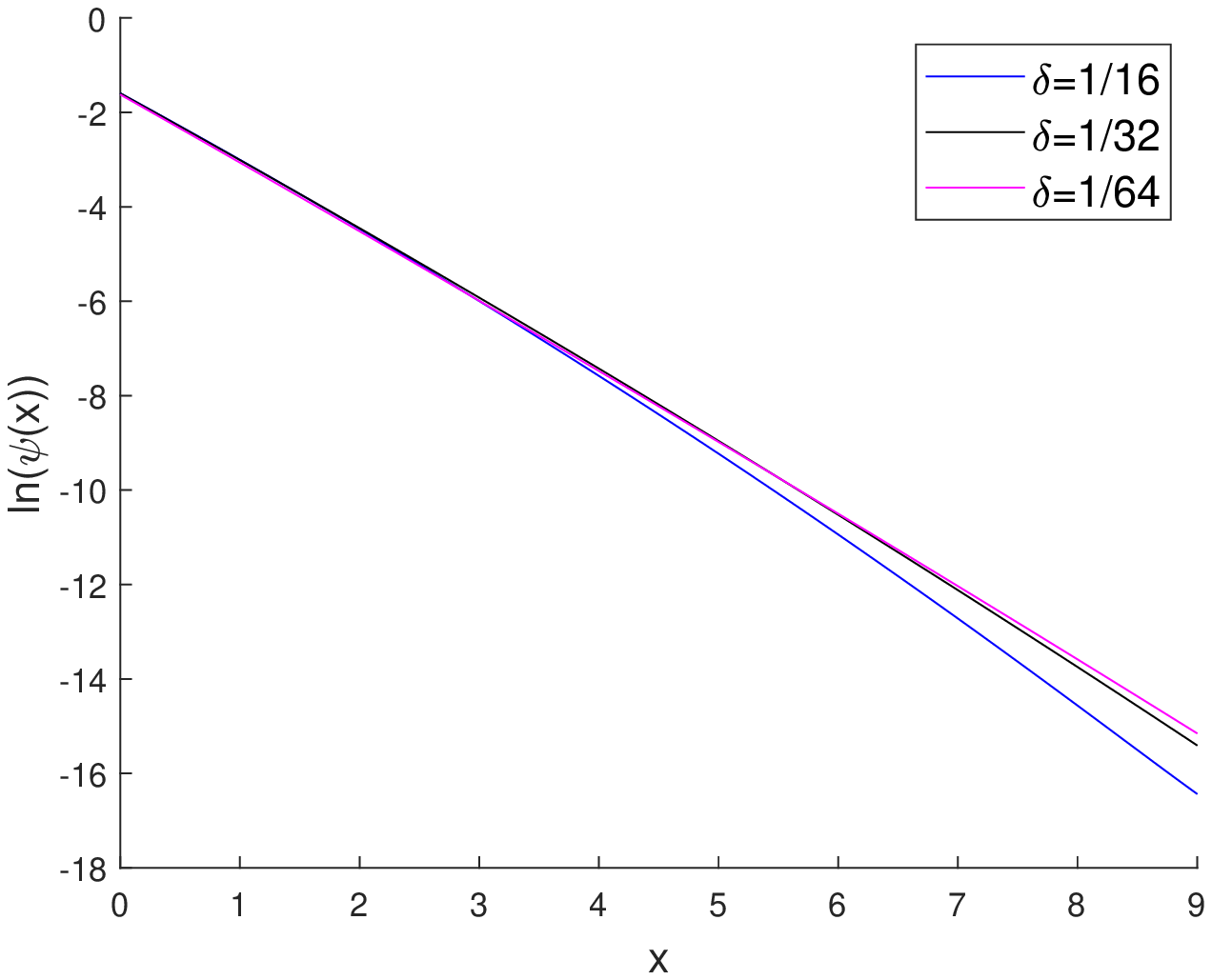}
	\includegraphics[width=0.48\textwidth]{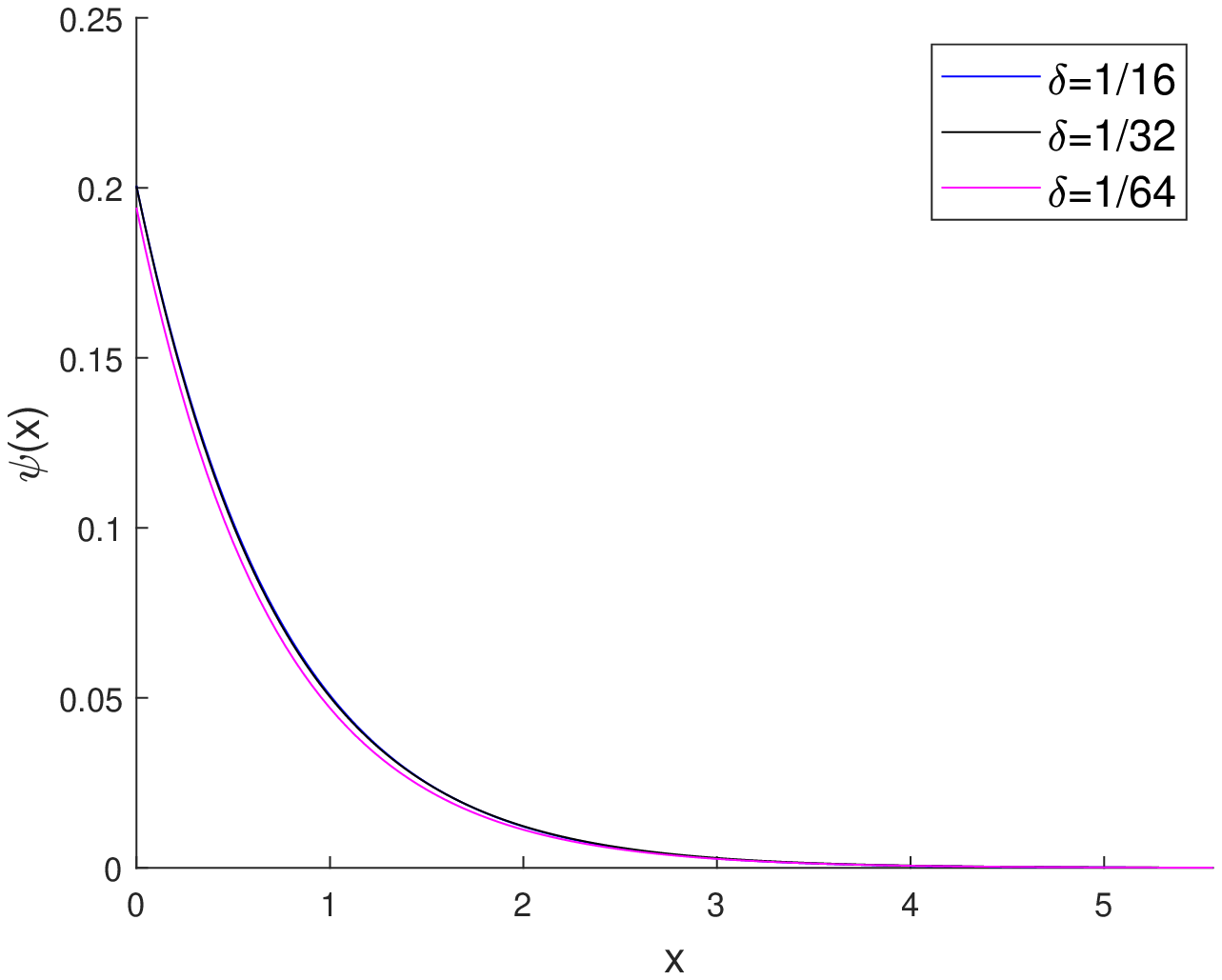}
	\includegraphics[width=0.48\textwidth]{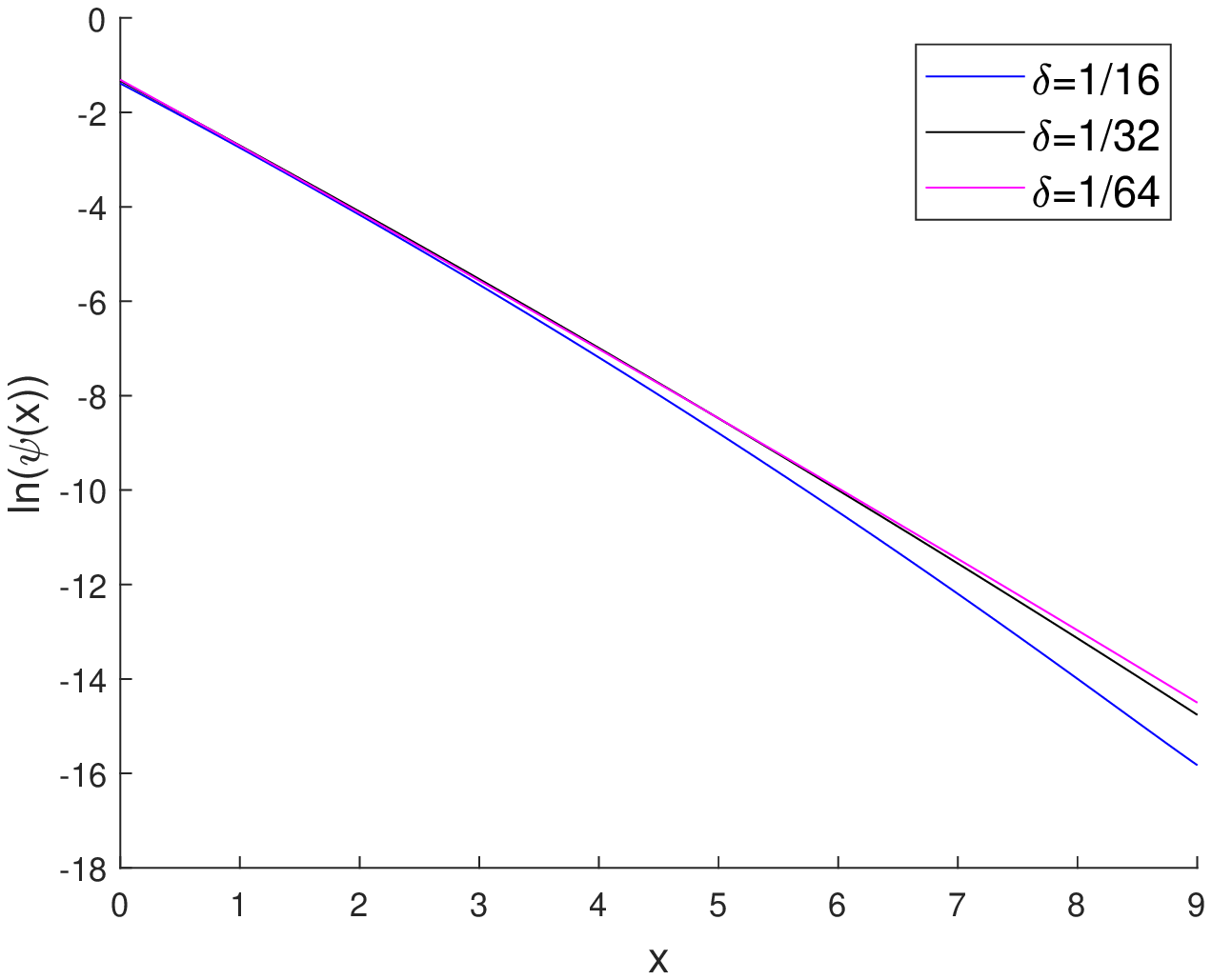}
	\caption{ (Image for \(\psi\) and \(\ln(\psi)\) with connectivity parameter \(b=0,1\).) In this figure, we put the values of \(\alpha\) and \(\beta\) in Table \ref{tab:rate2} into the ansatz \eqref{eq:fdeltaEstimate} 
		for the density function \(f^\delta\) with different \(\delta\). Numerically, we see the profile of \(\psi\) is independent of \(\delta\), which indicates that \(f^\delta\) is likely to exhibit self-similar structure when \(x\geq1\) as \(\delta\to0\). See Equations \eqref{eq:FKJD} and \eqref{EqNonlinearD} for the Fokker-Planck equations of the process \(X_t^\delta\). Left: Image for \(\psi\). Right: Image for \(\ln(\psi)\). Top: \(b=0\). Bottom: \(b=1\).}
	\label{fig:psi}
\end{figure}

\section{Conclusion and Discussion}\label{conclusion}

In this work we aim to reduce the gap in understanding between the mean-field integrate-and-fire model as a stochastic process and the PDE model as an evolving density function. As shown in \cite{caceres2011analysis}, it is possible to find an initial probability distribution such that the solution of the nonlinear Fokker-Planck equation with a deterministic firing potential must blow up in finite time, which is conjectured to be linked to the multiple firing events (synchronization) of neuronal networks. The random discharge mechanism is introduced to prevent the blow up of the solution of the PDE model such that the synchronized state becomes possible on the macroscopic level. In this paper, we have rigorously justified that the regularized solution is indeed an approximation to the original one, which confirms the scientific intuition behind the random discharge mechanism. As the continuation of \cite{liu2020rigorous}, we only focus on the linear cases and show that the relevant random discharge model converges to the original integrate-and-fire model in distribution as the regularization parameter goes to $0$. Mathematically, the iterated scheme can effectively reduce the difficulties of analyzing the problems with the firing-and-resetting mechanism, and gives more intuitive stochastic interpretations of the macroscopic quantities of the PDE, which are otherwise obscure. Using specifically designed numerical experiments, we have observed evidence for the convergence rate and the asymptotic behavior for both the linear cases and the more sophisticated nonlinear cases, which motivates us to carry out a rigorous asymptotic analysis in subsequent work. It is worth noting that we have not yet incorporated the dependence on the mean firing rate in the drift velocity and in the diffusion coefficient and we shall investigate those directions in later work. However, there are still additional challenges due to the interacting nature and the nonlinearity within the model.

\section*{Appendix}

\appendix
\renewcommand{\appendixname}{Appendix~\Alph{section}}

\section{Proof of Proposition \ref{finite dim convergence}} \label{detailed proof}

\begin{proof} 
	To prove the case when $n=2$, by the Markov property, we first have:
	$$
	\prob^0(X_{t_1}^\delta \in (a_1, b_1], X^\delta_{t_2}\in (a_2,b_2])=\int_{a_1}^{b_1} \prob^y( X^\delta_{t_2-t_1}\in (a_2,b_2])f^\delta(t_1,y)dy.
	$$
	and 
	$$
	\prob^0(X_{t_1} \in (a_1, b_1], X_{t_2}\in (a_2,b_2])=\int_{a_1}^{b_1} \prob^y( X_{t_2-t_1}\in (a_2,b_2])f(t_1,y)dy.
	$$
	Thus 
	\bae
	&\prob^0(X_{t_1}^\delta \in (a_1, b_1], X^\delta_{t_2}\in (a_2,b_2]) - \prob^0(X_{t_1} \in (a_1, b_1], X_{t_2}\in (a_2,b_2]) \\
	\le& \left| \int_{a_1}^{b_1} \prob^y( X_{t_2-t_1}\in (a_2,b_2]) \left[f^\delta(t_1,y)-f(t_1,y)\right] dy \right| \\
	+& \int_{a_1}^{b_1} \left| \prob^y( X^\delta_{t_2-t_1}\in (a_2,b_2]) - \prob^y( X_{t_2-t_1}\in (a_2,b_2]) \right| f^\delta(t_1,y) dy \\
	=&: I_1 +I_2
	\eae
	For $I_1$, consider the function 
	\beq
	\label{test function}
	\varphi(y)=
	\begin{cases}
		\prob^y(X_{t_2-t_1} \in (a_2,b_2])& \text{if y $\in (a_1,b_1]$},\\
		0& \text{otherwise}.
	\end{cases}
	\eeq
	According to the strong Feller property of $X_t$, $\prob^y(X_{t_2-t_1} \in (a_2,b_2])$ is a continuous function with respect to $y \in (-\infty,1)$. Thus $\varphi(y)$ is a bounded measurable function with discontinuities at $a_1$ and $b_1$. Moreover, by Proposition 3.1 of \cite{liu2020rigorous}, $F(t,y)$ is continuous in $y$ and thus puts $0$ measure on $\{a_1, b_1\}$. Thus by Theorem 3.2.10 of \cite{durrett2019probability} and the fact that $\e^0 [\varphi(X_{t_1}^\delta)] \to \e^0 [\varphi(X_{t_1})]$ as $\delta \to 0^+$, we get $I_1 \to 0$.
	
	For $I_2$, by Corollary \ref{uniform convergence in distribution}, for all $\delta\le \eta_0$ we have 
	\beq
	\label{second term}
	I_2\le \epsilon \int_{a_1}^{b_1}f^\delta(t,y)dy\le \epsilon.
	\eeq
	Thus we have proved Proposition 1 for $n=2$. In general, suppose Proposition \ref{finite dim convergence} holds for all $k\le n$. Now for $k=n+1$, we can similarly define $\varphi^{(n)}(y)$ for $y=(y_1, \cdots, y_n)$ as 
	\beq
	\varphi^{(n)}(y)=
	\begin{cases}
		\prob^y(X_{t_{n+1}-t_n} \in (a_{n+1},b_{n+1}])& \text{if $y \in (a_1,b_1]\times \cdots \times (a_n, b_n]$},\\
		0& \text{otherwise}.
	\end{cases}
	\eeq
	and have 
	\begin{equation}
	\begin{aligned}
	&\prob^0(X^\delta_{t_1}\in (a_1,b_1], X^\delta_{t_2}\in (a_2,b_2], \cdots, X^\delta_{t_n}\in (a_n,b_n], X^\delta_{t_{n+1}}\in (a_{n+1},b_{n+1}])\\
	-&\prob^0(X_{t_1}\in (a_1,b_1], X_{t_2}\in (a_2,b_2], \cdots, X_{t_n}\in (a_n,b_n], X_{t_{n+1}}\in (a_{n+1},b_{n+1}])\\
	\le& \left|\e^0\left[ \varphi^{(n)}(X_{t_1}, \cdots, X_{t_1}) \right] -\e^0\left[ \varphi^{(n)}(X^\delta_{t_1}, \cdots, X^\delta_{t_1}) \right]  \right|\\
	+& \int_{(a_1,b_1]\times \cdots \times (a_n, b_n]} \left| \prob^{y}( X^\delta_{t_{n+1}-t_{n}}\in (a_{n+1},b_{n+1}]) - \prob^{y}( X_{t_{n+1}-t_n}\in (a_{n+1},b_{n+1}]) \right| dF^{(n)}(y).
	\end{aligned}
	\end{equation}
	where $F^{(n)}(y)$ is the distribution of $(X_{t_1}, \cdots, X_{t_n})$. Note that $\varphi^{(n)}(\cdot)$ is a bounded measurable function on $\mathbb{R}^n$ whose discontinuities are given by 
	$$
	D_{\varphi^{(n)}} = \{y: \{y_1=a_1 \text{ or } b_1\}\cup \cdots \cup \{y_n=a_n \text{ or } b_n\} \}
	$$
	Since $F(\cdot, t_i)$, $i=1,2,\cdots n$ are all continuous on $(-\infty, 1]$, the joint distribution $F^{(n)}(\cdot)$ of $(X_{t_1}, \cdots, X_{t_n})$ puts $0$ mass on $D_{\varphi^{(n)}}$. Thus by (\romannumeral6) of Theorem 3.10.1 in \cite{durrett2019probability}, we have 
	$$
	\left|\e^0\left[ \varphi^{(n)}(X_{t_1}, \cdots, X_{t_1}) \right] -\e^0\left[ \varphi^{(n)}(X^\delta_{t_1}, \cdots, X^\delta_{t_1}) \right]  \right| \to 0.
	$$ 
	At the same time, by a similar argument as in \eqref{second term}, we have 
	$$
	\int_{(a_1,b_1]\times \cdots \times (a_n, b_n]} \left| \prob^{y}( X^\delta_{t_{n+1}-t_{n}}\in (a_{n+1},b_{n+1}]) - \prob^{y}( X_{t_{n+1}-t_n}\in (a_{n+1},b_{n+1}]) \right| dF^{(n)}(y) \le \epsilon.
	$$
	Thus by induction, the proof of Proposition \ref{finite dim convergence} is complete.
\end{proof}

\section{Proof of Lemma \ref{event control}}\label{detail}

\begin{proof}
	Recall Lemma \ref{OU control}, the definition of $E^{\epsilon_n}$ and the OU process
	$$
	Z^{1+\epsilon_n}_t=1+\epsilon_n - \int_0^tZ_s^{1+\epsilon_n}ds +B_t.
	$$
	In event $E^{\epsilon_n}$ there is
	\beq
	\label{n-1 stopping time upper bound}
	Z_{\Gamma_{4\epsilon_n}}^{1+\epsilon_n} \ge 1 + \epsilon_n - 2 \cdot 4\epsilon_n^2 + 4\epsilon_n >1 + 2\epsilon_n = 1+ \epsilon_{n-1}.
	\eeq
	Thus we know that $\tau_{n-1} < \Gamma_{4\epsilon_n}$ in event $E^{\epsilon_n}$. And for any $t<\Gamma_{4\epsilon_n}<4\epsilon_n^2$ with $\Gamma_{4\epsilon_n} < \Gamma_{-\frac12\epsilon_n}$,
	$$
	Z_t^{1+\epsilon_n} \ge 1 + \epsilon_n - 2\cdot 4\epsilon_n^2- \frac12\epsilon_n > 1 - 2\epsilon_n = 1 - \epsilon_{n-1}.	
	$$
	Thus we know that $Z_t^{1+\epsilon_n}$ does not hit $1-\epsilon_{n-1}$ before $\Gamma_{ 4\epsilon_n }$. Thus it must hit $1+\epsilon_{n-1}$ before $\Gamma_{ 4\epsilon_n }$. However, note that $\forall$ $t<\Gamma_{ \frac23\epsilon_n } < \Gamma_{ 4\epsilon_n }$, 
	$$
	Z_t^{1+\epsilon_n} \le 1 + \epsilon_n -0 - \frac23\epsilon_n  < 1 + 2\epsilon_n = 1+ \epsilon_{n-1}.	
	$$
	Thus we have $\Gamma_{\frac23\epsilon_n} < \tau_{n-1} < \Gamma_{4\epsilon_n}$ in $E^{\epsilon_n}$. And in $E^{\epsilon_n}$ for any $t<\Gamma_{4\epsilon_n}$ such that $B_t \in [\frac14\epsilon_n, \frac12 \epsilon_n]$,
	$$
	Z_t^{1+\epsilon_n} \ge 1 + \epsilon_n - 2\cdot 4\epsilon_n^2 + \frac14\epsilon_n  > 1 + \epsilon_n.	
	$$
	Thus in event $E^{\epsilon_n}$
	$$
	\int_0^{\tau_{n-1}} \mathbbm{1}_{ \{Z_t^{1+\epsilon_n} > 1+\epsilon_n\} }dt \ge \int_0^{ \Gamma_{\frac23\epsilon_n} } \mathbbm{1}_{ \{B_t\in [\frac14 \epsilon_n, \frac12\epsilon_n]\} } dt \ge \epsilon_n^2,
	$$
	and then
	$$
	\prob \left( \int_0^{\tau_{n-1}} \mathbbm{1}_{ \{Z_t^{1+\epsilon_n}>1+\epsilon_n\} }dt>\epsilon_n^2 \right) \ge \prob(E^{\epsilon_n} ). 
	$$
	Similarly, given $E^{\epsilon_n}$, for all $t < \Gamma_{4\epsilon_n} < 4\epsilon_n^2$, since $\Gamma_{4 \epsilon_n} < \Gamma_{-\frac12 \epsilon_n}$, we have 
	$$
	Z_t^{1-\epsilon_n} \ge 1-\epsilon_n - 2 \cdot 4\epsilon_n^2 -\frac12 \epsilon_n > 1-2\epsilon_n = 1-\epsilon_{n-1}
	$$
	and 
	$$
	Z_{\Gamma_{4\epsilon_n}}^{1 - \epsilon_n} \ge 1-\epsilon_n -2\cdot 4\epsilon_n^2 + 4\epsilon_n  > 1+2\epsilon_n = 1+\epsilon_{n-1}.
	$$
	At the same time, $\forall t< \Gamma_{2.8\epsilon_n} < \Gamma_{4\epsilon_n}$, we have 
	$$
	Z_t^{1-\epsilon_n} \le 1-\epsilon_n - 0 + 2.8 \epsilon_n  < 1+2\epsilon_n = 1+ \epsilon_{n-1}.
	$$
	Thus $\Gamma_{2.8\epsilon_n} < \tau_{n-1} < \Gamma_{4\epsilon_n}$. And for any $ t < \Gamma_{4\epsilon_n}$, $B_t \in [2.2 \epsilon_n, 2.5\epsilon_n]$, we also have 
	$$
	Z^{1-\epsilon_n}_t \ge 1-\epsilon_n  - 2 \cdot 4\epsilon_n^2+ 2.2\epsilon_n > 1 + \epsilon_n.
	$$
	So again in $ E^{\epsilon_n}$ we have  
	$$
	\int_0^{\tau_{n-1}} \mathbbm{1}_{ \{Z_t^{1-\epsilon_n}> 1 + \epsilon_n\} }dt \ge \int_0^{\Gamma_{2.8\epsilon_n}} \mathbbm{1}_{ \{B_t\in [2.2 \epsilon_n, 2.5 \epsilon_n]\} } dt \ge \epsilon_n^2,
	$$
	and then
	$$
	\prob \left( \int_0^{\tau_{n-1}} \mathbbm{1}_{ \{Z_t^{1-\epsilon_n}>1 + \epsilon_n\} }dt>\epsilon_n^2 \right) \ge \prob(E^{\epsilon_n} ). 
	$$
\end{proof}





\providecommand{\bysame}{\leavevmode\hbox to3em{\hrulefill}\thinspace}
\providecommand{\MR}{\relax\ifhmode\unskip\space\fi MR }
\providecommand{\MRhref}[2]{%
  \href{http://www.ams.org/mathscinet-getitem?mr=#1}{#2}
}
\providecommand{\href}[2]{#2}


\ACKNO{This work has been partially supported by Beijing Academy of Artificial Intelligence (BAAI). Z. Zhou is supported by NSFC grant No. 11801016, No. 12031013 and the National Key $R \& D$ Program of China, project Number 2020YFA0712000. J-G. Liu is partially supported by NSF grants DMS 1812573 and DMS 2106988. Y.Zhang is supported by NSFC 12026606 and the National Key $R \& D$ Program of China, project Number 2020YFA0712902. Z. Zhou thanks Beno\^{\i}t Perthame for helpful discussions. The authors would like to extend their profound gratitude to Dr. Lihu Xu for teaching them the nice and easy proof of the strong Feller property in Section \ref{feller section}.}

\nocite{*}
\end{document}